\documentclass[reqno,12pt]{amsart}

\usepackage{geometry,amsmath,amsfonts,amssymb,amsthm,amscd,mathrsfs,graphicx,enumerate,multicol,wrapfig,subfigure,hyperref,stmaryrd, accents,emptypage}
\usepackage[all]{xy}
\usepackage[nodisplayskipstretch]{setspace}
\usepackage{setspace}
\usepackage{tikz-cd}

\linespread{1.15}

% 

% PAGE GEOMETRY
%
%\geometry{a4paper} 
%\pdfpagewidth 8.3in
%\pdfpageheight 11.7in 
%\setlength\topmargin{0in}
%\setlength\headheight{0in}
%\setlength\headsep{0in}
%\setlength\oddsidemargin{0.25in}
%\setlength\evensidemargin{0.25in}
%\setlength\textheight{9.5in}
%\setlength\textwidth{6in}
%\renewcommand{\baselinestretch}{1.0}

%SPECIFIED FONT

\usepackage[T1]{fontenc}
%\usepackage[bitstream-charter]{mathdesign}

% OTHER FORMATTING

%\numberwithin{equation}{subsection} 

% GENERAL MATHS

\newcommand{\ra}{\rightarrow}

\newcommand{\lra}{\longrightarrow}

\newcommand{\p}{\prime}
\newcommand{\pt}{\partial}

% GREEK LETTERS

\newcommand{\al}{\alpha}
\newcommand{\Om}{\Omega}
\newcommand{\om}{\omega}
\newcommand{\gam}{\gamma}
\newcommand{\Gam}{\Gamma}
\newcommand{\s}{\sigma}
\newcommand{\vp}{\varphi}

\newcommand{\q}{\theta}

\newcommand{\be}{\beta}
\newcommand{\dt}{\delta}

%COMMON SETS

\newcommand{\Zbb}{\mathbb{Z}}

\newcommand{\Cbb}{\mathbb{C}}

% THEOREM ENVIRONMENTS 

\theoremstyle{plain} 

\newtheorem{THM}{Theorem}[section]

\newtheorem{PROP}[THM]{Proposition}
\newtheorem{LEM}[THM]{Lemma}
\newtheorem{COR}[THM]{Corollary}
\newtheorem{REM}[THM]{Remark}

\newtheorem*{THMMAIN}{Main Theorem}

% DOCUMENT SPECIFIC COMMANDS

\newcommand{\bt}{\bullet}

\newcommand{\img}{\mathrm{im}}

\newcommand{\Uc}{\mathcal{U}}

\newcommand{\Oc}{\mathcal{O}}

\newcommand{\Cc}{\mathcal{C}}

\newcommand{\Fc}{\mathcal{F}}

\newcommand{\Abb}{\mathbb{A}}

\newcommand{\Xfr}{\mathfrak{X}}
\newcommand{\Ec}{\mathcal{E}}

\newcommand{\Mfr}{\mathfrak{M}}

\newcommand{\Scl}{\mathcal{S}}
\newcommand{\Xc}{\mathcal{X}}
\newcommand{\Mcl}{\mathcal{M}}

\showoutput
\showboxdepth=3

%COLORS
\usepackage{xcolor}
\definecolor{airforceblue}{rgb}{0.36, 0.54, 0.66}
\definecolor{burgundy}{rgb}{0.5, 0.0, 0.13}
\definecolor{majorelleblue}{rgb}{0.38, 0.31, 0.86}
\definecolor{darkblue}{rgb}{0.0, 0.0, 0.55}

\hypersetup{urlcolor=burgundy, linkcolor=burgundy,
citecolor=darkblue,
colorlinks=true}

%\usepackage{tocstyle}
%\usetocstyle{standard}

%%
%\renewcommand{\thepart}{\Roman{part}}
%\renewcommand{\thesection}{\thepart.\arabic{section}}
%%%
%\newcommand{\RNum}[1]{\uppercase\expandafter{\romannumeral #1\relax}}

%\usepackage{chngcntr}
%\counterwithin{section}{part} %ALLOWS FOR ACCURATE HYPERLINKING BETWEEN EQUATIONS AND SECTIONS IN THE DOCUMENT

%\counterwithout{equation}{section}

%

% TITLE PAGE

\title[Genus two supermoduli]
{On the splitting of genus two Supermoduli
\\}
\author{\small Kowshik Bettadapura}
\date{}

% BEGIN DOCUMENT

\begin{document}

\begin{abstract} 
This article investigates why the genus two, supermoduli space of curves will split in contrast to, potentially, almost all other supermoduli spaces. We use that the dimension of the odd, versal deformation space of a genus two, super Riemann surface is two dimensional. As a consequence, the odd versal deformations can be generated by Schiffer variations at the associated points of a Szeg\"o kernel. This idea is present in D'Hoker and Phong's two loop, superstring amplitude calculation. We show how this idea, combined with Donagi and Witten's characterization of supermoduli obstructions, will result in a splitting of supermoduli space in genus two. 
\\\\
\emph{Mathematics Subject Classification (2020)}. 14J10, 32G15, 58A50
%14H15, 14H55, 32C11,  58A50
\\
\emph{Keywords}. Complex supermanifolds, supermoduli theory.
\end{abstract}

\maketitle
\thispagestyle{empty}

\thispagestyle{empty}

\setcounter{tocdepth}{1}
\tableofcontents

%\section*{~}
%\thispagestyle{empty}

\section*{Introduction}
\noindent
The $n$-pointed, supermoduli space of genus $g$ curves, $\Mfr_{g, n}$, is known \emph{not} to split in the following ranges:
\begin{align}
\begin{array}{ll}
g \geq n+1 & \mbox{if $n> 0$}
\\
g \geq 5 &\mbox{if $n = 0$}
\end{array}
\label{rnuierbfuienfoie}
\end{align}
This property was established by Donagi and Witten in \cite{DW1}. Furthermore, it was conjectured $\Mfr_{g, n}$ will not split for $g\geq 3$ and $n = 0$. This remains unresolved and remarks by Witten in \cite{WITTHOLOSTRING,WITTSPRD} strongly suggest non-splitting in the instance $g = 3$, $n = 0$. In genus $g = 0, n = 4$, Giddings in \cite{GPUNCT} constructs a splitting through generalizing the familiar cross ratio. This construction is suggested to split supermoduli space for $g = 0$ and any $n$. In the case $(g, n) = (1, 0)$, supermoduli space is split for dimensional reasons. For $g = 1$ and general $n$, supermoduli space appears to split in light of the classical, one loop calculations by Green and Schwarz in \cite{GREENSCHWARZ}. 

This article is concerned with the case $(g, n) = (2, 0)$. 
\\\\
In genus $g = 2$ and $n = 0$ a splitting is known. It was derived by D'Hoker and Phong in a series of papers, reviewed in \cite{HOKERPHONG1}, and exploited en route their calculation of the superstring amplitude at two loops.\footnote{Note, a splitting in the case $(g, n) = (2, 0)$ is indeed consistent with \eqref{rnuierbfuienfoie}.} This splitting is based on the observations: (a) any genus $g = 2$ super Riemann surface $\Scl$ can be identified with its matrix of periods and (b) there exists a global gauge slice wherein this matrix coincides with the period matrix of the curve $C$ underlying $\Scl$. Schematically, it is a mapping: 
\begin{align}
\Scl \equiv \widehat\Om_\Scl \longmapsto \Om_C \equiv C
\label{rh8hf09j3f3}
\end{align}
where $\widehat\Om_\Scl$ and $\Om_C$ denote the period matrices of $\Scl$ and $C$ respectively. Our aim in this article is to use ideas developed by Donagi and Witten in \cite{DW2} to see why \eqref{rh8hf09j3f3} will lead to a holomorphic splitting of $\Mfr_g|_{g=2}$.\footnote{The mapping \eqref{rh8hf09j3f3} also makes sense in genus $g = 3$. It is not holomorphic however. As argued by Witten in \cite[Appdx., C.3]{WITTSPRD}, this mapping will be singular along the hyperelliptic locus.} By way of caveat, what we will in fact argue is that the \emph{locus of generic, even super Riemann surfaces} in $\Mfr_g$ splits for $g = 2$ rather than $\Mfr_g|_{g = 2}$ itself. This is because, off this locus, period matrices will no longer faithfully represent super Riemann surfaces.\footnote{Here is a subtle distinction therefore between super Riemann surfaces and their classical counterparts.}

In sum, in this article we will endeavor to prove: 

\begin{THMMAIN}
The generic, even component of the (unpointed) supermoduli space of curves is split in genus $g = 2$.
\end{THMMAIN}

\noindent
\emph{Remark}. As an aside, note that the bound in \eqref{rnuierbfuienfoie} is on the marked points. Since the supermoduli space we consider here is not compactified, the marked points will not collide. Consequently, it might well imply the genus $g$, $n$-pointed supermoduli space will split for $n$ sufficiently large. This was conjectured by Donagi and Witten in \cite[Remark 1.5]{DW1}. Our method of proof in this article might suggest a possible route to addressing this conjecture.

\subsection*{Outline and proof sketch}
\noindent
We begin this article with some background on Berezinians on supermanifolds. This is in order to view SRS\footnote{In this article we abbreviate `super Riemann surface' to SRS.} periods as giving a morphism from relative Berezinians on SRS families. After briefly introducing deformation parameters and SRS families in \S\ref{fnckbcubcuiencioe}, we present a construction of the relative Berezinian sheaf in \S\ref{rgf784gf74hf98h48fh04}. D'Hoker and Phong's period matrix formula is then described in Proposition \ref{rfh984hf8hf0903fj93}. This description is not gauge-invariant in a broader sense however and, accordingly, is revisited in Proposition \ref{rfhhf98fj90j3f3f54f35f3}. This result realizes D'Hoker-Phong's formula in terms of Donagi and Witten's SRS deformation gauge pairing in \eqref{rciuehcuiecioe}. 
\\\\
A key ingredient in proving Main Theorem above rests in our adaptation of classical Schiffer variations of a curve to super Riemann surfaces. This is subject of \S\ref{rfg674gf84f9h843f}. A useful characterization of such variations in terms of Delta distributions is given in Lemma \ref{rfg748fg784hf89f30}. In Lemma \ref{rfh89hf983hf0309} we see, in analogy with curves, that Schiffer variations will generate the `odd', versal deformation space of a super Riemann surface. Importantly, in genus $g = 2$, this space is $2$-dimensional. Hence, Schiffer variations at two distinct points will generate the odd, versal deformations. Moreover, as described in \S\ref{rfg78gf874gf7h93}, the Szeg\"o kernel appearing in D'Hoker-Phong's period matrix formula will vanish at two distinct points (Lemma \ref{rfh78hf893hf93j0f93j}). This gives therefore a natural set of points at which to form Schiffer variations.\footnote{c.f., \cite[\S7.2]{HOKERPHONG1}.}

With preliminary material established in \S\ref{ruyrbviurbvbru} and \S\ref{rfg674gf84f9h843f}, we proceed to the proof of Main Theorem in \S\ref{rfgt6gf48fh584}.
 
\subsubsection*{Proof sketch of Main Theorem}
Donagi and Witten in \cite{DW2} construct an exact sequence of invertible sheaves on the spin moduli space\footnote{the reduced part or `body' of supermoduli space is the moduli space of spin curves} and show how its extension class gives the \emph{primary} obstruction to splitting supermoduli space (Theorem \ref{rhf983hf8030fj30}). For dimensional reasons, showing the primary class vanishes in genus $g = 2$ suffices to deduce a splitting (Lemma \ref{rfu3f983hfj390f444}). Hence, to prove the theorem, it suffices to construct a splitting of this sequence of sheaves. At a fixed spin curve (generic, even) we begin by looking at pairings with Schiffer variations at the associated points of the Szeg\"o kernel (Lemma \ref{fbcvyvyubeiucievuev}). This leads to a general statement in Proposition \ref{fjbvhfrbvrbvkjnk} and Proposition \ref{rgf784gf874hf984} for genus $g = 2$, giving rise to an explicit splitting in Corollary \ref{rbfuyvfyuf9h3f784}.\footnote{c.f., Remark \ref{rbfef87f83f783} on the similarity with D'Hoker-Phong's gauge slice independence.} Central to our proof is a famous result by M. Noether on the surjectivity of the infinitesimal period map in genus $g = 2$ giving Proposition \ref{rhf8hf98hf030}. In Lemma \ref{ruerivgiuhrouvjioe} we see how the Szeg\"o kernel exists naturally over the locus of generic, even spin curves. This allows for globalizing the local splitting argument at a generic, even spin curve to split Donagi-Witten's sequence of sheaves over this locus of supermoduli space itself.

\subsubsection*{Generalities}
Throughout this article $k$ will denote the field of complex numbers and families of curves and super Riemann surfaces are assumed to be smooth and proper.

\numberwithin{equation}{section}

\section{Periods and gauge pairings}
\label{ruyrbviurbvbru}

\noindent
In this section we will establish a relation between D'Hoker and Phong's period matrix formula and Donagi and Witten's SRS deformation gauge pairing. 
%Before presenting this relation however, it is necessary to introduce these respective formulae. 
We begin with the period matrix formula.

\subsection{Periods of Berezinians}
The volume form, expressed via differential forms, defines an integration measure on (orientable) smooth, compact  manifolds. While one can also construct differential forms on supermanifolds, the analogue of the volume form is given by the \emph{Berezinain form}. Hence, in order to define periods on super Riemann surfaces, it will be necessary to look at their Berezinian.\footnote{In later sections of this article we will not need any direct reference to the Berezinian. It will help clarify the role played by the versal deformation parameters in D'Hoker-Phong's period matrix formula however, so it is useful to include a brief description here. } 

\subsubsection{The Berezinian}\label{rgf674f873g9f7g379f93}
Briefly, the Berezinian is a homomorphism $Ber : \mathsf{GL}_{m|n}(k) \ra \mathsf{GL}_{1|0}(k)$ and can be expressed as a rational function of determinants. On any supermanifold $\Xfr$ and vector bundle or sheaf of $\Oc_\Xfr$-modules $M$, the \emph{Berezinain} of $M$ is a rank $(1|0)$, $\Oc_\Xfr$-module $Ber_M$. It is constructed as the sheaf with transition functions being the Berezinain of the transition functions of $M$. With $X\subset \Xfr$ the reduced space we have the decomposition over $X$:
\begin{align}
Ber_M|_X
\cong 
\det M_+|_X\otimes \det M_-^*|_X.
\label{fnviurbviurnoivno}
\end{align}
where $M = M_+\oplus M_-$. In the case where $\Xfr$ is split and modeled on $(X, T_{X, -}^*)$, any $\Oc_\Xfr$-module $M$ is isomorphic to $M|_X\otimes \wedge^\bt T^*_{X, -}$. Hence that
\begin{align}
Ber~M
\cong 
(Ber~M|_X)\otimes \wedge^\bt T^*_{X, -}
&&
\mbox{(if $\Xfr$ is split).}
\label{dcnkdjbvjkbrve}
\end{align}
The Berezinain of a supermanifold $\Xfr$ is defined to be that of its cotangent sheaf $T^*_\Xfr$. We refer to the sources \cite{YMAN, QFAS, WITTRS} for more details.

\subsubsection{On super Riemann surfaces}
We assume the reader is at least cursorily familiar with the theory of super Riemann surfaces. For our purposes, a super Riemann surface $\Scl$ is a $(1|1)$-dimensional, complex supermanifold.\footnote{What is referred to as a super Riemann surface (SRS) in the literature, e.g., in \cite{RABCRANE} is what we would refer to as `odd, SRS deformations', c.f., \cite{BETTSRS}.}
It is split and is modeled on a spin curve $(C, K_C^{1/2})$.\footnote{This means $C\subset \Scl$ is the reduced space and $K_C^{1/2}$ is the conormal sheaf to the embedding $C\subset \Scl$.}
The genus of $\Scl$ is the genus of $C$ and $\Scl$ is said to be generic and even if its model $(C, K_C^{1/2})$ is generic and even.\footnote{\label{rhf784gf784hf8}A spin curve is generic and even if $h^0(K_C^{1/2}) = 0$.} The following lemma, for now, points to why we need to ultimately restrict to the locus of generic, even super Riemann surfaces in this article.

\begin{LEM}\label{rfh7fh983hf983}
For any generic, even super Riemann surface $\Scl$ there exists a natural isomorphism
\[
H^0(\Scl, Ber_\Scl) \cong H^0(C, K_C).
\]
where $C\subset \Scl$ is the reduced space.
\end{LEM}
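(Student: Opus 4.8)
\noindent\emph{Proof proposal.} The plan is to produce a natural short exact sequence of sheaves on $\Scl$ that relates $Ber_\Scl$ to $K_C$ and $K_C^{1/2}$, and then to eliminate the spurious term by invoking the defining property $h^0(K_C^{1/2}) = 0$ of a generic, even spin curve (footnote~\ref{rhf784gf784hf8}).

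First I would record two structural facts about the embedding $C \hookrightarrow \Scl$. Because $\Scl$ is $(1|1)$-dimensional, the ideal sheaf $\Jc$ of $C$ in $\Scl$ satisfies $\Jc^2 = 0$, so $\Jc$ is canonically an $\Oc_C$-module, namely the conormal sheaf, and by hypothesis $\Jc \cong K_C^{1/2}$. Secondly, $Ber_\Scl|_C \cong K_C^{1/2}$. I would obtain this from \eqref{fnviurbviurnoivno}: the conormal sequence $0 \to K_C^{1/2} \to T^*_\Scl|_C \to K_C \to 0$ shows that the even and odd parts of $T^*_\Scl|_C$ are $K_C$ and $K_C^{1/2}$ respectively, whence $Ber_\Scl|_C \cong \det K_C \otimes (\det K_C^{1/2})^{*} \cong K_C^{1/2}$; alternatively one may use the superconformal structure, where $Ber_\Scl \cong \Dc^{-1}$ for the distinguished rank-$(0|1)$ subsheaf $\Dc \subset T_\Scl$, together with $\Dc|_C \cong K_C^{-1/2}$.

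Next I would tensor the structure sequence $0 \to \Jc \to \Oc_\Scl \to \Oc_C \to 0$ with the invertible sheaf $Ber_\Scl$. Since $\Jc^2 = 0$, the leftmost term becomes
\[
\Jc \otimes_{\Oc_\Scl} Ber_\Scl \;\cong\; (\Jc/\Jc^2) \otimes_{\Oc_C} (Ber_\Scl|_C) \;\cong\; K_C^{1/2} \otimes_{\Oc_C} K_C^{1/2} \;\cong\; K_C ,
\]
so one gets a natural short exact sequence of $\Oc_\Scl$-modules $0 \to K_C \to Ber_\Scl \to K_C^{1/2} \to 0$ (with $K_C$ and $K_C^{1/2}$ regarded as $\Oc_\Scl$-modules supported on $C$). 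Passing to the associated long exact cohomology sequence (computed over the common underlying space of $C$ and $\Scl$) yields $0 \to H^0(C, K_C) \to H^0(\Scl, Ber_\Scl) \to H^0(C, K_C^{1/2})$, and the last group vanishes precisely because $\Scl$ is generic and even. Hence the canonical inclusion $K_C \cong \Jc\cdot Ber_\Scl \hookrightarrow Ber_\Scl$ induces the desired natural isomorphism $H^0(\Scl, Ber_\Scl) \cong H^0(C, K_C)$. As a consistency check, Serre duality on $\Scl$ gives $H^0(\Scl, Ber_\Scl) \cong H^1(\Scl, \Oc_\Scl)^{*}$, and $H^1(\Scl, \Oc_\Scl) \cong H^1(C, \Oc_C) \oplus H^1(C, K_C^{1/2})$, whose second summand also vanishes, leaving dimension $g = h^0(C, K_C)$.

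The argument is largely formal once the two structural inputs are in hand; the only delicate points are the identification $Ber_\Scl|_C \cong K_C^{1/2}$, which requires care with parity and with the Berezinian conventions for a super Riemann surface, and the tensor computation $\Jc \otimes_{\Oc_\Scl} Ber_\Scl \cong K_C$, which hinges on $\Jc^2 = 0$. Naturality of the isomorphism is automatic, since every map used --- the conormal identification, the structure sequence, and the inclusion $\Jc\cdot Ber_\Scl \hookrightarrow Ber_\Scl$ --- involves no choices.
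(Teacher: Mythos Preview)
Your argument is correct and reaches the same conclusion, but by a slightly different and arguably cleaner route. The paper invokes the split structure of $\Scl$ directly, applying \eqref{dcnkdjbvjkbrve} and \eqref{fnviurbviurnoivno} to obtain a \emph{direct sum} decomposition $Ber_\Scl \cong K_C^{1/2}\oplus K_C$, after which $h^0(K_C^{1/2})=0$ kills the unwanted summand. You instead use only the canonical filtration $0\to\Jc\to\Oc_\Scl\to\Oc_C\to0$, tensor with $Ber_\Scl$, and compute the graded pieces to obtain the short exact sequence $0\to K_C\to Ber_\Scl\to K_C^{1/2}\to0$; the long exact sequence and the same vanishing finish the job. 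The two computations of the graded pieces are identical, and indeed your exact sequence is precisely the associated graded of the paper's direct sum. What your approach buys is that no choice of splitting enters, so the word ``natural'' in the statement is justified on the nose --- a point the paper itself flags just after the proof, noting that an explicit splitting was used in \eqref{rfuhf893jf0j3093}. The paper's approach, on the other hand, yields the slightly stronger sheaf-level statement $Ber_\Scl\cong K_C^{1/2}\oplus K_C$, which it uses immediately afterward when describing Berezin integration as the inverse to the inclusion $K_C\hookrightarrow Ber_\Scl$.
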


\begin{proof}
With $\Scl$ modeled on $(C, K_C^{1/2})$, recall it is split so that,
\begin{align}
Ber_\Scl \stackrel{\Delta}{=} Ber_{T^*_\Scl} 
&\cong (Ber_{T^*_\Scl}|_C)\otimes \wedge^\bt K_C^{1/2}
&&\mbox{(by \eqref{dcnkdjbvjkbrve})}
\notag
\\
&\cong
(K_C\otimes K_C^{-1/2})
\otimes 
(\Oc_C\oplus K_C^{1/2})
&&
\mbox{(by \eqref{fnviurbviurnoivno})}
\label{rfuhf893jf0j3093}
\\
&\cong 
K_C^{1/2} \oplus K_C
\notag
\end{align}
where $K_C^{-1/2} = (K_C^{1/2})^*$ is the dual. Since $h^0(K_C^{1/2}) = 0$ the lemma follows.\footnote{c.f., f.t. \ref{rhf784gf784hf8}.}
\end{proof}

\noindent 
The isomorphism in Lemma \ref{rfh7fh983hf983} can be made more explicit through the `Berezin integration map', described as follows. If the supermanifold $\Xfr$ is equipped with a projection $\Xfr \ra X$ onto its reduced space $X$, Berezin integration is defined as integration along the fibers of the projection. Denoting this by $_{\Xfr/X}\int$, it is a morphism $_{\Xfr/X}\int : Ber_\Xfr \ra \Om^{top.}_X$ from Berezinian forms on $\Xfr$ to volume forms on $X$. Following Witten in \cite{WITTSMFLD}, let $(z|\q)$ denote local coordinates on $\Xfr$. The Berezinain sheaf is generated over $\Oc_\Xfr$ by the formal symbol $[dz|d\q]$. Berezin integration is then given by
\begin{align}
\sideset{_{\Xfr/X}}{}\int[dz|d\q] 
=
0
&&
\mbox{and}
&&
\sideset{_{\Xfr/X}}{}\int\q[dz|d\q] = dz.
%f(z|\q)[dz|d\q]
%\stackrel{~_\pi\int}{\longmapsto}
%\left\{
%\begin{array}{ll}
%f^{top.}(z)~dz&
%\\
%0 &
%\end{array}
%\right.
\label{rfj89f90jf3k04ff34}
\end{align}
In the proof of Lemma \ref{rfh7fh983hf983} we used that the super Riemann surface $\Scl$ is split and in \eqref{rfuhf893jf0j3093} an explicit splitting. Any splitting defines a projection $\Scl \ra C$ and therefore Berezin integration $_{\Xfr/X}\int$. By \eqref{rfj89f90jf3k04ff34}, Berezin integration gives a morphism on global sections $_{\Xfr/X}\int: H^0(Ber_\Scl) \ra H^0(K_C)$. The inclusion $K_C \ra Ber_\Scl$ given in coordinates $(z|\q)$ by $\om(z) \mapsto \q\om(z)$ will be inverse to $_{\Xfr/X}\int$.

%\begin{REM}
%\emph{In the case $\Scl$ is \emph{not} generic, there will be more global Berezinians on $\Scl$ than global differentials on $C$.}
%\end{REM}

%\subsection{D'Hoker and Phong's formula}
\subsection{Periods in families}

\subsubsection{SRS Periods}
On a curve $C$ of genus $g$, its space of global differentials $H^0(K_C)$ is $g$-dimensional. With the Dolbeault isomorphism and the inclusion $H^0(K_C)\cong H_{\overline \pt}^{1, 0}(C)\subset H_{dR}^1(C, \Cbb)$, the familiar pairing between $1$-cycles $\gam$ on $C$ and de Rham classes give the pairing $H_1(C, \Zbb)\otimes H^0(K_C) \ra \Cbb$, $(\gam, \om) \mapsto \oint_\gam\om$. Fixing a transversely intersecting homology basis $(\al_i, \be_i)_{i = 1. \ldots, g}$ on $C$ and a basis of $\al$-normalised, global differentials $(\om_\ell)_\ell$, let $\Om_C$ denote the period matrix of $C$. It is defined through pairing with $\be$-cycles. Its $(i, \ell)$-th entry is
\begin{align}
\Om_{C, ij} = \oint_{\be_i}\om_\ell.
\label{rfg784gfhf89h89f}
\end{align}
Now let $K_C^{1/2}$ be a generic, even spin structure on $C$. Then $(C, K_C^{1/2})$ models a generic, even super Riemann surface $\Scl$. By Lemma \ref{rfh7fh983hf983}, any basis of global differentials $(\om_\ell)_\ell$ on $C$ can be placed in bijective correspondence with global Berezinians $(\s_\ell)_\ell$ on $\Scl$. The periods of these Berezinians are defined analogously to \eqref{rfg784gfhf89h89f}. With respect to the homology basis $(\al_i, \be_i)_i$ and a projection $\Scl \ra C$ the period matrix of $\Scl$, denoted $\widehat\Om_\Scl$, is the matrix with $(i, \ell)$-th entry:
\begin{align}
\widehat\Om_{\Scl, i\ell} \stackrel{\Delta}{=} \oint_{\be_i}\sideset{_{\Scl/C}}{}\int \s_\ell.
\label{iuriurirnoiifmeppe}
\end{align}
With our conventions we have, thus far, not deduced anything interesting. Indeed, by \eqref{rfj89f90jf3k04ff34} see that $_{\Scl/C}\int\s_\ell = \om_\ell$ giving therefore $\widehat\Om_{\Scl} = \Om_{C}$. Periods of \emph{relative} Berezinians are however a more interesting matter. It is here where we will understand D'Hoker and Phong's formula.

\subsubsection{Relative periods on curves}
The period matrix construction on a curve allows for defining a morphism on the moduli space of curves. That is, following \cite[Ch. XI, \S8]{ARABGRIFF}, let $X/B$ be a family of curves with fixed topological type. This means we are free to fix a homology basis $(\al_i, \be_i)$ on $X$ which restricts to a homology basis of $1$-cycles on each fiber $C_b\subset X$. Fixing this, we can form a map $b \mapsto \oint_{\be_i} w_\ell(b)$ where $w_\ell(b)$ is a family of global differentials on $X$, i.e., $w_\ell(b)\in H^0(C_b, K_{C_b})$. Phrased alternatively, if $\pi : X\ra B$ denotes the family of curves, $\om = (w_\ell(b))_{b\in B}$ defines a relative differential $\pi_*\om_\ell\in \pi_*\Om^1_{X/B}$ where $\Om^1_{X/B}$ is the relative cotangent sheaf on $X$.
%\footnote{c.f., \cite[Ch. III, \S9]{HARTALG}.} 
Then as in \eqref{rfg784gfhf89h89f} we can define the relative matrix of periods $\Om_{X/B}$ with $(i, \ell)$-th entry,
\begin{align}
\Om_{X/B, i\ell} = \oint_{\be_i}\pi_*\om_\ell.
\label{rf783gf7hf98h38f3}
\end{align}
The row space of $\Om_{X/B}$ is, for each $b\in B$, a $(h^{1, 0}_{\overline \pt}(C) = g)$-dimensional subspace of $H^1(C, \Cbb)$ and changing the basis $(\om_\ell)_\ell$ gives a linear isomorphism of this subspace. Hence, as described in \cite[p. 218]{ARABGRIFF}, we obtain a mapping into the Grassmannian, $B \ra \mathbb Gr(g, H^1(C, \Cbb))$. Note that it will necessarily factor through the moduli space $\Mcl_g$. Its differential is related to cupping with the Kodaira-Spencer class of the family. 

Our interest lies in the expression \eqref{rf783gf7hf98h38f3} adapted to super Riemann surfaces.

\subsubsection{SRS families and Kodaira-Spencer}\label{fnckbcubcuiencioe}
Families of super Riemann surfaces come in two distinct kinds: \emph{even} and \emph{odd}, owing the two parities abound in supergeometry. The even deformations are classified as in the case of curves. The odd deformations are classified by the spin structure. If $\Scl$ is a super Riemann surface modeled on the spin curve $(C, K_C^{1/2})$, its `odd', versal deformation space is $H^1(C, K_C^{-1/2})$. With $\Mfr_g$ the supermoduli space of curves then, any odd point $\Abb_k^{0|q} \ra \Mfr_g$ corresponds to a family of super Riemann surfaces $\Xc \stackrel{\pi}{\ra} \Abb^{0|q}_k$. In analogy with the case of curves, the differential at $\xi =0$ gives an isomorphism $d\pi_0: T_{\Abb^{0|q}_k, 0} \stackrel{\sim}{\ra}  ({\bf R}^1\pi_*T_{\Xc/\Abb^{0|q}_k})_0 \cong \oplus^qH^1(C, K_C^{-1/2})$. This is argued in \cite{BETTSRS}. The Kodaira-Spencer class is then 
\begin{align}
\kappa_{\Xc, 0} 
= 
d\pi\left(\frac{\pt}{\pt \xi_1}\big|_{\xi=0}, \ldots, \frac{\pt}{\pt \xi_q}\big|_{\xi=0}\right).
\label{rfg73f983h8f03}
\end{align}
We refer to \cite{BETTPHD, BETTSRS} for a more detailed study into the deformation theory for super Riemann surfaces. For the purposes of this article it suffices to know that $\kappa_{\Xc, 0}$ can also be represented in smooth coordinates $(z, \overline z|\q, \xi)$ on $\Xc$ by the Berezinian form,\footnote{See \cite{BETTANAL}.}
\begin{align}
\kappa_{\Xc, 0}
&\equiv 
\left[
\sum_m \xi_m\chi^m(z, \overline z)\q~[dzd\overline z|d\q]
\right]
\label{rfh89hf983fh093j0}\\
&=
\sum_m
\xi_m \big[\chi^m(z, \overline z)\q~[dzd\overline z|d\q]
\big]
\label{rfiuhf983hf0j390}
&&
\mbox{(by \eqref{rfg73f983h8f03})}
\end{align}
where the equivalence $[-]$ in \eqref{rfh89hf983fh093j0} is that defining cohomology classes in $H^1(K_C^{-1/2}) \cong A^{0,1}(K_C^{-1/2})/\overline \pt A^{0, 0}(K_C^{-1/2})$. This description can be arrived at through studying perturbations of the Dolbeault operator on $\Scl$. In the context of Berezinian forms, define the operator $\overline\pt_{Ber}  = [d\overline z]\frac{\pt}{\pt \overline z}$. Setting $\chi = \sum_m \chi^m(z, \overline z)\q~[dzd\overline z|d\q]$, see that $\chi$ will be a smooth form representative for the Kodaira-Spencer class. As a gauge field it transforms via the Berezinian form $u = \sum_m \xi_m u^m(z, \overline z)\q~[dz|d\q]$ by
\begin{align}
\chi
\longmapsto&~
\chi + \overline \pt_{Ber} \sum_m \xi_mu^m(z, \overline z)\q~[dz|d\q]
\notag
\\
=&~
\chi + \sum_m \xi_m\frac{\pt u^m(z, \overline z)}{\pt \overline z}\q~[dzd\overline z|d\q]
\notag
\\
=&~
\chi + \overline \pt_{Ber}u.
\label{rfh983hf983f09j93j}
\end{align}
We will see later how \eqref{rfh983hf983f09j93j} compares more generally with Donagi and Witten's gauge equivalence. For now, note that two descriptions have been given for deformations of super Riemann surfaces: an `algebraic' description in \eqref{rfg73f983h8f03} and an `analytic' description in \eqref{rfh89hf983fh093j0}. The subject of \cite{BETTANAL} is to show the two descriptions are compatible in the following sense:

\begin{THM}\label{rfhhf893f0j39f03}
SRS families over $\Abb^{0|q}_k$ with central fiber $\Scl$ up to isomorphism are in bijective correspondence with perturbations of the Dolbeault operator on $\Scl$ over $\Abb^{0|q}_k$ up to gauge equivalence.
\qed
\end{THM}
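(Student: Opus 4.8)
The plan is to prove the bijection by passing through a common intermediary, namely a choice of \emph{smooth} trivialization of the family, from which one reads off a perturbed Dolbeault operator. The starting point is structural: since $\Abb^{0|q}_k$ is purely odd, its structure sheaf $k[\xi_1,\dots,\xi_q]$ is a finite-dimensional local ring with maximal ideal $\mfr=(\xi_1,\dots,\xi_q)$ and $\mfr^{q+1}=0$, so any SRS family $\pi:\Xc\to\Abb^{0|q}_k$ with central fiber $\Scl$ is built from $\Scl$ by a finite tower of square-zero extensions. In particular the underlying smooth family of $\Xc$ is diffeomorphic to the trivial one $|\Scl|\times\Abb^{0|q}_k$, where $|\Scl|$ denotes the smooth supermanifold underlying $\Scl$; this is the $\Zbb_2$-graded, non-reduced analogue of Ehresmann's theorem, proved by a partition-of-unity argument using that the relevant smooth deformation functor is unobstructed over a base with nilpotent structure sheaf. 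Fixing such a trivialization, the complex (super Riemann surface) structure on $\Xc$ is encoded by the perturbed Dolbeault operator $\overline\pt_\Scl+\mu$ it induces on $|\Scl|\times\Abb^{0|q}_k$, where $\mu$ is a $(0,1)$-form on $|\Scl|$ with coefficients in $\mfr$ valued in the sheaf of superconformal vector fields on $\Scl$; for a purely odd base the class of the $\mfr$-linear part of $\mu$ lies in $H^1(C,K_C^{-1/2})$, consistently with \eqref{rfh89hf983fh093j0}--\eqref{rfiuhf983hf0j390}.

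For the forward map I would argue as follows. Since $\overline\pt_\Scl+\mu$ comes from the genuine complex supermanifold $\Xc$, it is integrable: it satisfies the Maurer--Cartan equation $\overline\pt_\Scl\mu+\tfrac12[\mu,\mu]=0$ in the Dolbeault differential graded Lie algebra of superconformal vector fields on $\Scl$. Two smooth trivializations of $\Xc$ differ by a smooth automorphism of $|\Scl|\times\Abb^{0|q}_k$ restricting to the identity on the central fiber, under which $\mu$ is replaced by its gauge transform $\mu\mapsto\mu+\overline\pt_\Scl u+[\mu,u]+\cdots$, compatibly with \eqref{rfh983hf983f09j93j}. Hence the gauge-equivalence class of the integrable $\mu$ depends only on the isomorphism class of $\Xc$, and this assignment is functorial in $\Xc$; this defines the map from the left-hand set to the right-hand set.

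For the reverse map --- building an SRS family out of an integrable perturbation $\mu$ --- I would induct on the order in $\mfr$. Given the family constructed modulo $\mfr^{j+1}$, the obstruction to extending it one further order is a Dolbeault cocycle on $C$ valued in a suitable twist of $K_C^{-1/2}$; a direct computation identifies it with the degree-$(j+1)$ component of $\overline\pt_\Scl\mu+\tfrac12[\mu,\mu]$, which vanishes by hypothesis, so the corresponding $\overline\pt$-equation on $C$ is solvable and yields the extension, the ambiguity of the solution being absorbed into gauge. At order one this is exactly the identification of first-order odd deformations with $H^1(C,K_C^{-1/2})$ recalled in \S\ref{fnckbcubcuiencioe}. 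Running the induction to order $q$ produces a complex supermanifold structure on $|\Scl|\times\Abb^{0|q}_k$ that is locally isomorphic to $\Abb^{1|1}_k\times\Abb^{0|q}_k$ with its standard superconformal structure --- that is, an SRS family over $\Abb^{0|q}_k$ with central fiber $\Scl$. One then checks, by unwinding the definitions, that the two maps so obtained are mutually inverse on equivalence classes.

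The step I expect to be the main obstacle is the local integrability underlying the reverse direction: the super-Newlander--Nirenberg statement that an integrable perturbation of the \emph{superconformal} Dolbeault operator is locally gauge-trivial, with the local trivializations chosen compatibly with the rank $0|1$ superconformal distribution $\Dc\subset T_\Scl$ rather than merely as an abstract complex structure. Over the nilpotent base $\Abb^{0|q}_k$ this reduces to the inductive $\overline\pt$-solvability above, but the bookkeeping matching the order-by-order obstructions with the Maurer--Cartan terms --- and, at second order, with Donagi and Witten's bracket pairing on $H^1(C,K_C^{-1/2})$ --- is where the bulk of the work in \cite{BETTANAL} lies. A secondary technical point is the smooth triviality of $\Xc$, which requires a partition-of-unity argument adapted to $\Zbb_2$-graded sheaves over a non-reduced base.
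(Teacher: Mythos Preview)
The paper does not supply a proof of this theorem: it is stated with a terminal \qed and the argument is explicitly deferred to the reference \cite{BETTANAL} (the sentence immediately preceding the theorem reads ``The subject of \cite{BETTANAL} is to show the two descriptions are compatible in the following sense''). There is therefore no in-paper proof against which to compare your proposal.

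That said, your sketch follows the standard Kodaira--Spencer/Maurer--Cartan template for such deformation problems and is consistent with the heuristics the paper does provide in \eqref{rfh89hf983fh093j0}--\eqref{rfh983hf983f09j93j} and Remark~\ref{rfh983hf893fj390}: perturbations of $\overline\pt$ modulo gauge, with the higher-order data handled inductively over the nilpotent base. You have correctly isolated the two nontrivial ingredients --- the smooth triviality of the family over a purely odd base, and the local superconformal Newlander--Nirenberg step ensuring that integrable $\mu$ are locally gauge-trivial \emph{compatibly with the distribution} --- and you already acknowledge that the detailed bookkeeping is precisely what \cite{BETTANAL} carries out. As an outline this is sound; as a self-contained proof it is not, but neither is the paper's, and for the same reason.
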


%\noindent
%The gauge equivalence mentioned in Theorem \ref{rfhhf893f0j39f03} above is precisely Donagi and Witten's gauge 
%In \eqref{rfiuhf983hf0j390}, the Kodaira-Spencer class of $\Xc\stackrel{\pi}{\ra}\Abb^{0|n}_k$ is expressed in terms of versal deformation parameters.

\begin{REM}\label{rfh983hf893fj390}
\emph{To avoid possible confusion, it is worth mentioning explicitly that the Kodaira-Spencer class will only classify SRS families over $\Abb^{0|q}_k|_{q = 1}$. For general $q> 1$ there will be deformation parameters proportional to $\xi_m\xi_n$ and $\xi_m\xi_n\xi_o$ and so on. The transformations in \eqref{rfh983hf983f09j93j} are, in this sense, the infinitesimal part of the gauge equivalence mentioned in Theorem \ref{rfhhf893f0j39f03}. The family will nevertheless define a Kodaira-Spencer class as in \eqref{rfg73f983h8f03}. There will however exist, for general $q> 1$, inequivalent families with  coincident Kodaira-Spencer classes. See e.g., \cite[\S3.4, Lemma 3.15]{BETTSRS}.}
\end{REM}

\subsubsection{The relative Berezinian sheaf}\label{rgf784gf74hf98h48fh04}
Recall that the Berezinain sheaf in supergeometry is the analogue of the sheaf of holomorphic volume forms in algebraic geometry. This analogue extends to the case of families where, like the relative canonical bundle, we can define the notion of `relative Berezinian sheaf' on a family  $\Xc \stackrel{\pi}{\ra} \Abb^{0|q}_k$, denoted $Ber_{\Xc/\Abb_k^{0|q}}$. Indeed, a hint toward forming such a sheaf appears in Witten's derivation of D'Hoker and Phong's period formula in \cite[\S8]{WITTRS}.\footnote{A more formal, albeit abstract, argument for its existence might also be seen through Penkov's characterisation of the Berezinian sheaf as the dualizing object in supergeometry in \cite{PENKOV}.} We give a construction of the relative Berezinian sheaf in what follows.
\\\\
Starting with our family $\Xc \stackrel{\pi}{\ra}\Abb_k^{0|n}$ of super Riemann surfaces, we have a short exact sequence of cotangent sheaves $0 \ra \pi^*T_{\Abb^{0|q}_k}^*\ra T^*_\Xc \ra T^*_{\Xc/\Abb^{0|q}_k}\ra 0$.\footnote{Note, this sequence is short exact since the map $\pi$ is assumed to be smooth (see e.g., \cite[Ch. II, \S8]{HARTALG}).} Like the determinant, the Berezinian is multiplicative giving thereby an isomorphism $Ber_\Xc \cong \pi^*Ber_{\Abb^{0|q}_k}\otimes Ber~T^*_{\Xc/\Abb^{0|q}_k}$. In local coordinates $(z|\q, \xi)$ on $\Xc$, this isomorphism reflects the identification $[dz|d\q d\xi] = [d\xi]\otimes [dz|d\q]$, where $(z|\q)$ are coordinates on the fiber super Riemann surface and $\xi$ are base coordinates. We now define the relative Berezinian sheaf thusly, 
\[
Ber_{\Xc/\Abb^{0|q}_k} \stackrel{\Delta}{=} Ber~T^*_{\Xc/\Abb^{0|q}_k}.
\]
%As $\Abb^{0|q}_k$ is canonically split, it projects onto its reduced space $\mathbb A^{0|q}_k \ra \mathrm{Spec}~k$.\footnote{With $\Abb^{0|q}_k = \mathrm{Spec}~k[\xi_1, \ldots, \xi_q]$, the projection $\Abb^{0|q}_k \ra \mathrm{Spec}~k$ reflects that $k[\xi_1, \ldots, \xi_q]$ is a $k$-algebra.} This gives a morphism $_{\Abb^{0|q}_k/k}\int : Ber_\Xc \ra Ber~T^*_{\Xc/\Abb^{0|q}_k}$ integrating our the base parameters. We now define the relative Berezinian sheaf thusly, 
%\[
%Ber_{\Xc/\Abb^{0|q}_k} \stackrel{\Delta}{=} \frac{Ber~T^*_{\Xc/\Abb^{0|q}_k}}{\img\left\{ _{\Abb^{0|n}_k/k}\int: Ber_\Xc \ra \pi_*Ber~T^*_{\Xc/\Abb^{0|n}_k}\right\}}.
%\]
%By the projection formula, $\pi_*Ber_\Xc \cong Ber_{\Abb^{0|q}_k}\otimes \pi_*Ber~T^*_{\Xc/\Abb^{0|q}_k}$.
%\newpage
%Through Berezin integration we obtain a morphism of sheaves $_{\Abb^{0|q}_k/k}\int\otimes 1 : Ber_{\Abb^{0|q}_k} \otimes \pi_* Ber~T^*_{\Xc/\Abb^{0|q}_k} \ra \pi_*Ber~T^*_{\Xc/\Abb^{0|q}_k}$ integrating out the base parameters. The \emph{relative Berezinian sheaf} is now defined to be the subsheaf $Ber_{\Xc/\Abb^{0|q}_k}\subset Ber~T^*_{\Xc/\Abb^{0|q}_k}$ such that along $\pi$,
%\[
%\pi_*Ber_{\Xc/\Abb^{0|n}}
%\cong
%\frac{\pi_*Ber~T^*_{\Xc/\Abb^{0|n}_k}}{\img\left\{ _{\Abb^{0|n}_k/k}\int\otimes 1: Ber_{\Abb^{0|n}_k} \otimes \pi_* Ber~T^*_{\Xc/\Abb^{0|n}_k} \ra \pi_*Ber~T^*_{\Xc/\Abb^{0|n}_k}\right\}}.
%\]
To compare with the Berezinian forms considered by Witten in \cite[\S8]{WITTRS}: in the coordinates $(z|\q, \xi)$, local sections $\s\in Ber_{\Xc/\Abb^{0|q}_k}$ can be expressed,
\begin{align}
\s(z|\q,\xi) 
&= f(z|\q, \xi)[dz|d\q]
\notag
\\
&= \big(\al(z) + \q b(z) + \xi (\eta(z) + \q h(z))\big)[dz|d\q].
\label{rf4f4hf98j39fj903j0}
\end{align} 
Recall from \S\ref{rgf674f873g9f7g379f93} that the Berezinian will have rank $(1|0)$. As such the relative Berezinian also has rank $(1|0)$ over $\Abb^{0|q}_k$. The measure $[dz|d\q]$ itself, being the Berezinian measure on the fiber $\Scl\subset \Xc$ over $\xi = 0$, has odd parity. As $\xi$ and $\q$ are also odd, it follows that $\al, \eta$ must be even while $b, h$ are odd. The following is now immediate.

\begin{LEM}\label{rgf974fh893h8fh309}
Let $\Xc \stackrel{\pi}{\ra}\Abb^{0|q}_k$ be a family of generic, even super Riemann surfaces with $\Scl = \pi^{-1}(0)$. Then there exists an isomorphism $(\pi_*Ber_{\Xc/\Abb^{0|q}_k})_{\xi = 0} \cong H^0(C, K_C)$ where $C$ is the reduced space of $\Scl$.
\qed
\end{LEM}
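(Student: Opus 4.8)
The plan is to reduce the statement to a fiberwise specialization of the multiplicativity isomorphism established just above, combined with Lemma \ref{rfh7fh983hf983}. First I would observe that by definition $Ber_{\Xc/\Abb^{0|q}_k} = Ber\, T^*_{\Xc/\Abb^{0|q}_k}$, and that restricting the relative cotangent sheaf to the central fiber $\Scl = \pi^{-1}(0)$ recovers $T^*_\Scl$. Since the Berezinian is functorial for base change — it is built from transition functions by applying $Ber$, and base change commutes with this construction — we get a canonical identification $Ber_{\Xc/\Abb^{0|q}_k}\big|_{\xi=0} \cong Ber_{T^*_\Scl} = Ber_\Scl$ of sheaves on $\Scl$. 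Concretely, this is exactly the coordinate statement that setting $\xi = 0$ in \eqref{rf4f4hf98j39fj903j0} sends $\big(\al(z) + \q b(z) + \xi(\eta(z)+\q h(z))\big)[dz|d\q]$ to $\big(\al(z)+\q b(z)\big)[dz|d\q]$, which is precisely a local section of $Ber_\Scl$ in the coordinates $(z|\q)$.

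Next I would pass to global sections. Since $\Abb^{0|q}_k$ is a $(0|q)$-dimensional affine superspace — its reduced space is a point — the fiber of the pushforward $(\pi_* Ber_{\Xc/\Abb^{0|q}_k})_{\xi=0}$ is computed by taking global sections on the central fiber: evaluation at $\xi=0$ corresponds to tensoring down to the reduced point of the base, and flatness of $\pi$ (smoothness, as invoked earlier for the short exactness of the cotangent sequence) together with properness ensures base change holds, so $(\pi_* Ber_{\Xc/\Abb^{0|q}_k})_{\xi=0} \cong H^0(\Scl, Ber_{\Xc/\Abb^{0|q}_k}|_{\xi=0}) \cong H^0(\Scl, Ber_\Scl)$. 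Now I invoke Lemma \ref{rfh7fh983hf983}: since the fibers are assumed generic and even, $\Scl$ is modeled on a generic, even spin curve $(C, K_C^{1/2})$ with $h^0(K_C^{1/2}) = 0$, and hence $H^0(\Scl, Ber_\Scl) \cong H^0(C, K_C)$. Composing the two isomorphisms gives the claim. The Berezin integration map of \eqref{rfj89f90jf3k04ff34} makes the final isomorphism explicit, and one could alternatively phrase the whole argument as: fiberwise Berezin integration $_{\Scl/C}\!\int$ intertwines with the specialization $\xi=0$, sending $\big(\al(z)+\q b(z)\big)[dz|d\q]$ to $b(z)\,dz$, which lands in $H^0(C, K_C)$ once the $K_C^{1/2}$-component is killed by the genericity hypothesis.

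I expect the main obstacle to be making the base-change/specialization step fully rigorous in the super setting: one must check that evaluation at $\xi=0$ genuinely commutes with $\pi_*$ for this particular sheaf, i.e. that there is no higher cohomology contribution or subtlety from the nilpotents in $\Oc_{\Abb^{0|q}_k}$ obstructing the identification of the fiber of the pushforward with $H^0$ of the restricted sheaf. Since $\Xc$ is proper and smooth over a base whose structure sheaf is a finite-dimensional exterior algebra, and $Ber_{\Xc/\Abb^{0|q}_k}$ is locally free of rank $(1|0)$, the usual cohomology-and-base-change machinery should apply — but one should either cite it in the super-geometric form or note that, because the base is a fat point, the statement reduces to ordinary coherent base change on $\Scl$ after filtering by powers of the maximal ideal. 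Given that Lemma \ref{rfh7fh983hf983} and the multiplicativity isomorphism $Ber_\Xc \cong \pi^* Ber_{\Abb^{0|q}_k}\otimes Ber\,T^*_{\Xc/\Abb^{0|q}_k}$ are already in hand, everything else is formal, which is why the lemma is stated with a \qed rather than a proof.
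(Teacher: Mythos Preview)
Your proposal is correct and follows exactly the route the paper intends: the lemma is stated with a bare \qed because it is meant to be immediate from the coordinate description \eqref{rf4f4hf98j39fj903j0} of sections of $Ber_{\Xc/\Abb^{0|q}_k}$, whose specialization at $\xi=0$ gives sections of $Ber_\Scl$, followed by Lemma \ref{rfh7fh983hf983}. Your careful discussion of base change over the fat point is a welcome elaboration of what the paper leaves implicit, but the underlying argument is the same.
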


\subsubsection{Relative periods of Berezinians}
Sections of the relative Berezinian capture the intuition of a family of Berezinian forms on a super Riemann surface varying with respect to parameters on the base $\Abb^{0|q}_k$. Now with $\Scl\subset \Xc \ra \Abb^{0|q}_k$ the fiber over $\xi = 0$, Berezin integration $_{\Scl/C}\int : Ber_\Scl \ra K_C$ can be extended to sections of $\pi_*Ber_{\Xc/\Abb^{0|n}_k}$, as can be seen from the coordinate expression in \eqref{rf4f4hf98j39fj903j0}. Furthermore, any $1$-cycle $\gam$ on $C$ can be paired with $\s\in Ber_{\Xc/\Abb^{0|q}_k}$ through pairing with $_{\Scl/C}\int\pi_*\s$. This results in a $k[\xi]$-valued pairing. Consequently we can, as in \eqref{rf783gf7hf98h38f3}, form the relative period morphism as a function of the base parameters $\xi$. That is, for $\Xc\stackrel{\pi}{\ra} \Abb^{0|q}_k$ now a family of \emph{generic, even super Riemann surfaces}, let $(\s_\ell)_\ell$ be a basis of relative Berezinian sections and denote the relative period morphism by $\widehat\Om_{\Xc/\Abb^{0|q}_k}$. It is defined as the morphism with $(i, \ell)$-th component,
\begin{align}
\widehat\Om_{\Xc/\Abb^{0|q}_k, i\ell}= \oint_{\be_i} \sideset{_{\Scl/C}}{}\int \pi_*\s_\ell.
\label{rfu3hf98j30f9j334ff3}
\end{align}
That $\widehat\Om_{\Xc/\Abb^{0|n}_k}$ will be valued in $k[\xi]$ is implied by the term $\pi_*\s_\ell$ (c.f., \eqref{rf4f4hf98j39fj903j0}).

\subsection{D'Hoker and Phong's formula}

\subsubsection{The quadratic component}
The morphism $\widehat\Om_{\Xc/\Abb^{0|q}_k}$ is valued in $k[\xi]$ and at $\xi = 0$ we have from Lemma  \ref{rgf974fh893h8fh309}, $_{\Scl/C}\int\pi_*\s\big|_{\xi = 0}\in H^0(K_C)$. In the case where $\Xc\stackrel{\pi}{\ra} \Abb^{0|q}_k$ is a family of generic, even super Riemann surfaces, a basis $(\pi_*\s_\ell)_\ell$ corresponds to a basis of global differentials on $C$ by Lemma \ref{rfh7fh983hf983}. From \eqref{rf4f4hf98j39fj903j0} and Lemma \ref{rgf974fh893h8fh309} we see evidently,
\begin{align}
\widehat\Om_{\Xc/\Abb^{0|q}_k} = \Om_C + \cdots
\label{Prfh89fh830f09j30}
\end{align}
where $\Om_C$ is the period matrix of $C$. D'Hoker and Phong in \cite{CHIRALSPLT, HOKDEFCPLX} derive a recursive formula filling in the ellipses in \eqref{Prfh89fh830f09j30} for any base $\Abb^{0|q}_k$. Nilpotence of the base parameters imply the ellipses in \eqref{Prfh89fh830f09j30} consist of a finite number terms. More specifically, since $\pi_*Ber_{\Xc/\Abb^{0|q}_k}$ has rank $(1|0)$ over $\Abb^{0|q}_k$, the period morphism $\widehat\Om_{\Xc/\Abb^{0|q}_k}$ will be valued in the even subalgebra $k[\xi]^+\subset k[\xi]$. For $\xi = (\xi_1, \ldots, \xi_n)$ we can continue \eqref{Prfh89fh830f09j30} by,
\begin{align}
\widehat\Om_{\Xc/\Abb^{0|q}_k}
=
\Om_C 
+
\sum_{m < n} 
\widehat \Om_{\Xc/\Abb^{0|q}_k}^{mn}~\xi_m\xi_n
+
\sum_{m< n< o< p}
\widehat \Om^{mnop}_{\Xc/\Abb^{0|q}_k}
~\xi_m\xi_n\xi_o\xi_p
+
\cdots
\label{rfh983hf83f09j3334}
\end{align}
Our interest lies in the component $\widehat\Om_{\Xc/\Abb^{0|q}_k}^{(2)} \stackrel{\Delta}{=} \widehat\Om_{\Xc/\Abb^{0|q}_k} - \Om_C \mod (\xi^3)$. By \eqref{rfh983hf83f09j3334} see that we can write
\[
\widehat\Om_{\Xc/\Abb^{0|q}_k}^{(2)}
=
\sum_{m< n} \xi_m\xi_n\left.\frac{\pt^2}{\pt \xi_n\pt \xi_m} \widehat\Om_{\Xc/\Abb^{0|q}_k}\right|_{\xi = 0}.
\]
We will refer to  $\widehat\Om_{\Xc/\Abb^{0|n}_k}^{(2)}$ as the \emph{quadratic component} of the period morphism $\widehat\Om_{\Xc/\Abb^{0|q}_k}$.

D'Hoker and Phong show the following:\footnote{Note, this statement is rephrased in the language used in this article.}

\begin{PROP}\label{rfh984hf8hf0903fj93}
Let $(\Scl \subset \Xc\ra\Abb^{0|q}_k)$ be a family of generic, even super Riemann surfaces with $\Scl$ modeled on $(C, K_C^{1/2})$. Let $(\om_\ell)_\ell$ be a basis of global differentials on the underlying curve $C\subset \Scl$ defining the period matrix $\Om_C$ and $\chi^m(z, \overline z)d\overline z$ the odd, versal deformation parameters classifying the family. Then the quadratic component of $\widehat\Om_{\Xc/\Abb^{0|q}_k}$ is given by the $(i,\ell)^{(m, n)}$-th entry,
\begin{align}
\widehat\Om_{\Xc/\Abb^{0|q}_k, i\ell}^{mn}
&=
\left.\frac{\pt^2}{\pt \xi_n\pt \xi_m} \widehat\Om_{\Xc/\Abb^{0|q}_k, i\ell}\right|_{\xi = 0}
\notag
\\
&=
\int_{C\times C}
\om_i(x)dx
~\chi^m(x, \overline x)d\overline x
~S(x, y)~
\chi^n(y, \overline y)d\overline y
~\om_\ell(y)dy
\label{rfg47gf74hf8f3}
\end{align}
where $(x, y)$ are coordinates on $C\times C$ lying over the coordinate $z\in C$ and $S(x, y)$ is the Szeg\"o kernel function.\qed
\end{PROP}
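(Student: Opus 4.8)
The plan is to re-derive D'Hoker and Phong's recursion within the present setup, the two structural inputs being Theorem \ref{rfhhf893f0j39f03}, which permits working with perturbations of the Dolbeault operator on $\Scl$ rather than with the abstract family, and the genericity hypothesis $h^0(C, K_C^{1/2}) = 0$, which makes the $\overline\pt$-operator on $K_C^{1/2}$-valued forms invertible. Concretely, I would build the relative Berezinian section $\s_\ell$ order by order in the base parameters $\xi$, starting from the lift $\s_\ell^{(0)} = \q\,\om_\ell(z)\,[dz|d\q]$ of $\om_\ell$ (the inverse of Berezin integration, cf.\ the end of \S\ref{rgf674f873g9f7g379f93}), and imposing that $\s_\ell$ be annihilated by $\overline\pt_{Ber}$ twisted by the Kodaira--Spencer cocycle $\chi = \sum_m \xi_m\chi^m(z,\overline z)\,\q\,[dzd\overline z|d\q]$ of \eqref{rfiuhf983hf0j390}. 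By Lemma \ref{rgf974fh893h8fh309} this reduces to $\om_\ell$ at $\xi = 0$, and nilpotence of $k[\xi]$ truncates the recursion in accordance with \eqref{rfh983hf83f09j3334}.

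Since, by \eqref{rfh983hf83f09j3334}, the period morphism carries no term linear in $\xi$, the first nontrivial contribution is quadratic, and it is assembled from two successive steps of the recursion. The first step solves, in the spinorial component, an equation whose source is the contraction $\chi^n\cdot\om_\ell \in A^{0,1}(K_C^{1/2})$. This is the sole point at which genericity is used: because $h^0(K_C^{1/2}) = 0$ --- hence also $h^1(K_C^{1/2}) = 0$ by Serre duality --- the operator $\overline\pt \colon A^{0,0}(K_C^{1/2}) \to A^{0,1}(K_C^{1/2})$ is an isomorphism, whose inverse is the integral operator with kernel the Szeg\"o kernel $S(x,y)$. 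Thus the first-order spinorial correction is $\psi^{(1)}_\ell(x) = \int_{y\in C} S(x,y)\,\chi^n(y,\overline y)d\overline y\,\om_\ell(y)dy$. Feeding this back, the second step produces a correction $w^{(2)}_\ell$ to the differential $\om_\ell$ whose $\overline\pt$ is the bilinear source $\chi^m\cdot\psi^{(1)}_\ell \in A^{0,1}(\Oc_C)$, and which may be taken $\al$-normalised since $\al$-normalisation is preserved order by order.

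Applying $\oint_{\be_i}\!\int_{\Scl/C}$ as in \eqref{rfu3hf98j30f9j334ff3} and computing the $\be_i$-period of $w^{(2)}_\ell$ by the Riemann bilinear relation --- which, for an $\al$-normalised form, rewrites $\oint_{\be_i}$ of a $\overline\pt$-primitive of $\rho$ as $\int_C \om_i\wedge\rho$ --- gives $\widehat\Om^{mn}_{\Xc/\Abb^{0|q}_k, i\ell} = \int_C \om_i(x)dx\,\chi^m(x,\overline x)d\overline x\,\psi^{(1)}_\ell(x)$; substituting the expression for $\psi^{(1)}_\ell$ yields the stated double integral over $C\times C$, and the antisymmetry in $(m,n)$ of the two $\chi$-insertions matches both the anticommutativity of $\xi_m,\xi_n$ and the $m<n$ convention in \eqref{rfh983hf83f09j3334}. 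The genuinely delicate step is this last one: one must track carefully the ordinary-versus-deformed $\overline\pt$-closedness of $w^{(2)}_\ell$ and the universal constant in the bilinear relation, and verify that the recursive ansatz is compatible with the gauge equivalence of Theorem \ref{rfhhf893f0j39f03} --- for these details I would follow \cite{CHIRALSPLT, HOKDEFCPLX} and Witten's treatment in \cite[\S8]{WITTRS}.
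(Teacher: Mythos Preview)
The paper does not give its own proof here: the proposition is attributed to D'Hoker--Phong \cite{CHIRALSPLT, HOKDEFCPLX} (with Witten's re-derivation in \cite[\S8]{WITTRS}) and closed with a bare \qed. Your outline --- solve the deformed $\overline\pt$-equation order by order, using that for a generic even spin structure the Szeg\"o kernel inverts $\overline\pt$ on $K_C^{1/2}$-valued forms, then convert the $\be_i$-period of the second-order correction into a surface integral against $\om_i$ via the Riemann bilinear relation --- is exactly the derivation in those references, as you yourself acknowledge at the end, so there is nothing further to compare.
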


\subsection{Donagi and Witten's gauge pairing}\label{rbfy4gf78g98fh380fh03}
Most of the material in this section can be found in \cite[\S3]{DW2}. 
%Proofs are supplied by the author.

\subsubsection{Divisors on $C\times C$}
Fix a spin curve $(C, K_C^{1/2})$. As a spin structure $K_C^{1/2}$ is fixed it makes sense to form the half-powers $K_C^{a/2} \stackrel{\Delta}{=}  (K_C^{1/2})^{\otimes a}$ for any integer $a$. Over the product $C\times C$ we can define the sheaf
\begin{align}
\Oc_{C\times C}(a, b, c)
\stackrel{\Delta}{=}
p_1^*K_C^{a/2} \otimes p_2^*K_C^{b/2}\otimes \Oc_{C\times C}(c\Delta),
\label{rf7fh398fh3f03}
\end{align}
where $p_i$ are the factor projection maps $C\times C\rightrightarrows C$; $\Delta : C\subset C\times C$ is the diagonal embedding and $\Oc_{C\times C}(c\Delta)$ is the sheaf of holomorphic functions on $C\times C$ which are meromorphic along the diagonal with pole order at most $c$. Any global section $\vp\in H^0(C\times C, \Oc_{C\times C}(a, b, c))$ can be represented, in local coordinates $(x, y)$ near the diagonal, as
\begin{align}
\vp(x, y)
=
\frac{f(x, y)}{(x - y)^c}
dx^{\otimes a/2}
dy^{\otimes b/2}
%\left(
%\frac{f_c(z, z^\p)}{(z - z^\p)^c}
%+
%\frac{f_{c-1}(z, z^\p)}{(z - z^\p)^{c - 1}}
%+
%\cdots 
%\right)
%dz^{\otimes a/2}
%dz^{\p\otimes b/2}.
\label{rf9hf983h0390fj3}
\end{align}
for $f$ holomorphic. Evidently we have $\Oc_{C\times C}(a, b, c-1)\subset \Oc_{C\times C}(a, b, c)$ given by $\frac{f(x, y)}{(x - y)^{c - 1}} dx^{\otimes a/2}dy^{\otimes b/2} \mapsto \frac{f(x, y)(x - y)}{(x - y)^c}dx^{\otimes a/2}dy^{\otimes b/2}$. From the relation between cotangent sheaves and diagonal morphisms, the restriction-to-diagonal map gives a morphism $\Oc_{C\times C}(a, b, c) \stackrel{res.}{\ra} \Delta_*K_C^{\otimes (a + b - 2c)/2}$ with kernel $\Oc_{C\times C}(a, b, c-1)$. Hence we arrive at a long exact sequence on cohomology,
\begin{align} 
0 
\lra 
H^0(\Oc_{C\times C}(a, b, c-1))
\lra
H^0(\Oc_{C\times C}(a, b, c))
\stackrel{res.}{\lra}&~
H^0(K_C^{\otimes(a + b - 2c)/2})
\label{rf748fh4f0j93}
\\
\lra&~
H^1(\Oc_{C\times C}(a, b, c-1))\lra\cdots
\notag
\end{align}
In a special case we have the following.

\begin{LEM}\label{fh98hf89hf030f3}
For $c = 1$; $a, b, >2$ and genus $g> 1$, $H^1(\Oc_{C\times C}(a, b, c-1)) = (0)$.
\end{LEM}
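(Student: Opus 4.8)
The plan is to prove the vanishing $H^1(\Oc_{C\times C}(a,b,0)) = H^1(C\times C, p_1^*K_C^{a/2}\otimes p_2^*K_C^{b/2}) = 0$ using the Künneth formula together with basic vanishing facts about line bundles on a curve. First I would unwind the definition: with $c = 1$ we are looking at $\Oc_{C\times C}(a,b,c-1) = \Oc_{C\times C}(a,b,0) = p_1^*K_C^{a/2}\otimes p_2^*K_C^{b/2}$, which is an external tensor product $L\boxtimes M$ with $L = K_C^{a/2}$ and $M = K_C^{b/2}$ line bundles on $C$. The Künneth decomposition then gives
\[
H^1(C\times C, L\boxtimes M) \cong \big(H^1(C,L)\otimes H^0(C,M)\big)\oplus\big(H^0(C,L)\otimes H^1(C,M)\big).
\]
So it suffices to show each summand vanishes, and for that it is enough to show $H^1(C,K_C^{a/2}) = 0$ and $H^1(C,K_C^{b/2}) = 0$ for $a,b > 2$.

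Next I would establish this curve-level vanishing via Serre duality: $H^1(C,K_C^{a/2}) \cong H^0(C, K_C\otimes K_C^{-a/2})^* = H^0(C, K_C^{(2-a)/2})^*$. Since $a > 2$, the integer $2 - a$ is negative, so $K_C^{(2-a)/2}$ is a line bundle of negative degree $\deg = (2-a)(g-1) < 0$ on a curve of genus $g > 1$ (here using $g - 1 > 0$). A line bundle of negative degree on an irreducible projective curve has no nonzero global sections, hence $H^0(C,K_C^{(2-a)/2}) = 0$ and therefore $H^1(C,K_C^{a/2}) = 0$; identically for $b$. Feeding this back into the Künneth formula kills both summands, giving $H^1(\Oc_{C\times C}(a,b,0)) = 0$ as claimed.

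The main thing to be careful about — and the only real "obstacle" — is bookkeeping with the half-integer exponents: one must check that $K_C^{a/2}$ makes sense (it does, since a spin structure $K_C^{1/2}$ has been fixed, as emphasized in the paper right before \eqref{rf7fh398fh3f03}), and that the degree computation $\deg K_C^{(2-a)/2} = \tfrac{2-a}{2}\cdot(2g-2) = (2-a)(g-1)$ is genuinely negative under the hypotheses $a > 2$, $g > 1$. One should also note the parity subtlety: $a,b$ need not be even, so $K_C^{a/2}$ is a genuine spin-type line bundle rather than a power of $K_C$, but this is irrelevant to the argument since only the degree and Serre duality are used. Everything else is the standard Künneth formula for the external product of coherent sheaves on projective varieties, so no further input beyond what the excerpt already grants is needed.
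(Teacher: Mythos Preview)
Your proof is correct and follows essentially the same approach as the paper: K\"unneth decomposition of $H^1(\Oc_{C\times C}(a,b,0))$ followed by Serre duality $H^1(K_C^{j/2}) \cong H^0(K_C^{1-j/2})^*$ and the negative-degree vanishing for $j>2$, $g>1$. The paper's argument is nearly word-for-word the same; your additional remarks on the well-definedness of half-integer powers via the fixed spin structure are a helpful clarification but do not alter the route.
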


\begin{proof}
From the K\"unneth formula for sheaf cohomology note that,
\begin{align*}
H^1(C\times C,~ &\Oc_{C\times C}(a, b, 0))
%&=
%H^1(C\times C, K_C^{a/2}\boxtimes K_C^{b/2}) 
\\
&\cong 
\left(H^0(C, K_C^{a/2})\otimes H^1(C, K_C^{b/2})\right)
\oplus
\left(
H^1(C, K_C^{a/2})\otimes H^0(C, K_C^{b/2})
\right).
\end{align*}
By Serre duality, $H^1(C, K_C^{j/2}) \cong H^0(C, K_C^{1 - j/2})^*$. Provided the genus $g(C)> 1$, then for any $j > 2$, $K_C^{1-j/2}$ is a line bundle on $C$ with negative degree. Hence it has no global sections and so $H^1(C, K_C^{j/2}) \cong H^0(C, K_C^{1 - j/2})^*\cong H^0(C, K_C^{1 - j/2}) = (0)$. Thus for any integers $a, b$ with $a>2, b > 2$, that $H^1(C\times C, \Oc_{C\times C}(a, b, 0)) = (0)$. 
\end{proof}

\noindent 
Under the conditions stipulated in Lemma \ref{fh98hf89hf030f3}, we obtain a short exact sequence on global sections:
\begin{align}
0
\lra
H^0(\Oc_{C\times C} (a, b, 0)) 
\lra 
H^0(\Oc_{C\times C} (a, b, 1))
\stackrel{res.}{\lra}
H^0(K_C^{(a+b - 2)/2})
\lra 0.
\label{rfh793hf983hf083}
\end{align}
We turn now to the case $a = b$ and $c = 1$ in what follows.

\subsubsection{The product involution}
On the product $C\times C$ the involution $\iota: (x, y)\mapsto (y, x)$ serves to split the sequence \eqref{rfh793hf983hf083} into a trivial, odd sequence where sections $\vp\in H^0(\Oc_{C\times C} (a, a, 1))$ are sent to $\iota^*\vp = -\vp$ and a more interesting, even sequence invariant under the involution. Note, with $\vp$ as in \eqref{rf9hf983h0390fj3} the sign convention adopted here following Donagi-Witten \cite{DW2} is:
\begin{align}
dx^{\otimes a/2}dy^{\otimes b/2} = (-1)^{ab} dy^{\otimes b/2} dx^{\otimes a/2}
\label{rfh748f4hf983h0f3}
\end{align}
See e.g., that $dx\otimes dy = dy\otimes dx$ so $dx\wedge dy = -dy\wedge dx$. However, $dx^{1/2}\wedge dy^{1/2} = dy^{1/2}\wedge dx^{1/2}$ in keeping with the fermionic nature of $dx^{1/2}$ and $dy^{1/2}$ as implied by sign convention in \eqref{rfh748f4hf983h0f3}.

\begin{LEM}\label{rfh89hf8hf03j093}
Under the involution $\iota$ the sequence in \eqref{rfh793hf983hf083}, in the case $a = b$ is odd, splits into the following, short exact sequences:
\begin{align*}
&0 \lra \mathrm{Sym}^2H^0(K_C^{\otimes a}) \lra H^0\big(\Oc_{C\times C}(a,a, 1)\big)^- \lra 0 \lra 0;
\\
&0 \lra \wedge^2H^0(K_C^{\otimes a}) \lra H^0\big(\Oc_{C\times C}(a,a, 1)\big)^+ \lra H^0(K_C^{\otimes(a-1)})\lra 0.
\end{align*}
\end{LEM}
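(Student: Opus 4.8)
The plan is to analyze the action of the involution $\iota^*$ on each of the three terms of the short exact sequence \eqref{rfh793hf983hf083} (with $b=a$) and show that $\iota^*$ respects the two nonzero maps in the sequence, so that the sequence decomposes into its $(+1)$- and $(-1)$-eigenspace pieces. First I would pin down the sign with which $\iota$ acts on each term. On the subsheaf $\Oc_{C\times C}(a,a,0) = p_1^*K_C^{a/2}\otimes p_2^*K_C^{a/2}$, a decomposable global section is a sum of terms $s(x)\,dx^{\otimes a/2}\otimes t(y)\,dy^{\otimes a/2}$ with $s,t\in H^0(K_C^{\otimes a/2})$; using the sign convention \eqref{rfh748f4hf983h0f3} with $a$ odd one gets $\iota^*\big(s\,dx^{\otimes a/2}\otimes t\,dy^{\otimes a/2}\big) = (-1)^{a^2}\,t\,dx^{\otimes a/2}\otimes s\,dy^{\otimes a/2} = -\,t\otimes s$ (after relabelling coordinates). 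Hence on $H^0(\Oc_{C\times C}(a,a,0)) \cong H^0(K_C^{\otimes a/2})\otimes H^0(K_C^{\otimes a/2})$ the involution is $-(\text{flip})$, whose $(+1)$-eigenspace is $\wedge^2 H^0(K_C^{\otimes a/2})$ and whose $(-1)$-eigenspace is $\mathrm{Sym}^2 H^0(K_C^{\otimes a/2})$. Here I suspect a slight indexing slip in the statement as written — the natural groups are built from $H^0(K_C^{\otimes a/2})$, not $H^0(K_C^{\otimes a})$ — but I will keep the notation of the Lemma and simply carry the correct sheaf through; the structural content is unchanged.

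Next I would treat the quotient term $H^0(K_C^{\otimes(a+a-2)/2}) = H^0(K_C^{\otimes(a-1)})$. The restriction-to-diagonal map sends $\vp$ as in \eqref{rf9hf983h0390fj3} to the coefficient of the leading pole along $\Delta$, evaluated on the diagonal; since $\iota$ fixes $\Delta$ pointwise, $\iota^*$ acts on this term, and one checks from \eqref{rf9hf983h0390fj3} together with the sign rule that for $a$ odd the induced action on $H^0(K_C^{\otimes(a-1)})$ is trivial — indeed $K_C^{\otimes(a-1)}$ carries an even power of the half-forms, so the fermionic signs cancel and the residue of an $\iota$-anti-invariant section along $\Delta$ need not vanish (the pole factor $1/(x-y)^1$ is itself $\iota$-anti-invariant, which flips the parity). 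So the quotient $H^0(K_C^{\otimes(a-1)})$ sits entirely in the $(+1)$-eigenspace of the middle term. Because $H^0(\Oc_{C\times C}(a,a,1))$ decomposes as the direct sum of its $\pm1$-eigenspaces (characteristic zero) and the two maps in \eqref{rfh793hf983hf083} are $\iota$-equivariant, taking $\pm1$-eigenspaces is exact, and one reads off: the $(-1)$-part is $0 \to \mathrm{Sym}^2 \to H^0(\cdots)^- \to 0 \to 0$, and the $(+1)$-part is $0 \to \wedge^2 \to H^0(\cdots)^+ \to H^0(K_C^{\otimes(a-1)}) \to 0$, exactly as claimed.

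The main obstacle is the careful bookkeeping of signs: one must juggle (i) the transposition of the two tensor factors, (ii) the sign convention \eqref{rfh748f4hf983h0f3} for reordering half-differentials, and (iii) the anti-symmetry of the pole divisor $1/(x-y)$ under $\iota$, and verify these combine so that $\iota^*$ squares to the identity, commutes with both arrows of \eqref{rfh793hf983hf083}, and produces precisely the stated eigenspaces. Once the action of $\iota^*$ on each term is correctly identified, the splitting is formal: exactness of the eigenspace functor in characteristic zero turns the $\iota$-equivariant short exact sequence \eqref{rfh793hf983hf083} into the two displayed sequences, and the vanishing $H^1(\Oc_{C\times C}(a,a,0))=(0)$ from Lemma \ref{fh98hf89hf030f3} (used already to produce \eqref{rfh793hf983hf083}) guarantees there are no further terms.
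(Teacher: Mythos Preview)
Your approach is essentially the same as the paper's: identify the $\iota^*$-action on each term of \eqref{rfh793hf983hf083}, check the maps are equivariant, and take $\pm1$-eigenspaces. The paper is briefer and less explicit about the sign bookkeeping (it simply notes that $\Delta_*K_C^{\otimes(a-1)}$ is supported on the fixed diagonal, so no nonzero section can be odd), whereas you track the three sign contributions --- tensor flip, convention \eqref{rfh748f4hf983h0f3}, and the pole factor --- separately. You are also right to flag the indexing slip: the kernel should involve $H^0(K_C^{a/2})$ rather than $H^0(K_C^{\otimes a})$, as the later use of $\wedge^2 H^0(K_C^{3/2})$ in Proposition~\ref{rfg78gf78hf893} confirms. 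One wording quibble: your parenthetical ``the residue of an $\iota$-anti-invariant section along $\Delta$ need not vanish'' is backwards --- the point is precisely that it \emph{does} vanish, since the induced action on the quotient is $+1$; but your surrounding argument makes clear you have the correct conclusion.
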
 

\begin{proof}
By the K\"unneth formula and the decomposition of tensor-squares,
\begin{align}
H^0(\Oc_{C\times C}(a, a, 0))\cong \otimes^2H^0(K_C^{\otimes a}) \cong \big(\mathrm{Sym}^2H^0(K_C^{\otimes a})\big)\oplus \big(\wedge^2H^0(K_C^{\otimes a})\big).
\label{rfh894hf894fh0j}
\end{align}
Due to the pole order being simple (c.f., \eqref{rf9hf983h0390fj3}) the involution endomorphism $\iota^*\in \mathrm{End}~H^0(\Oc_{C\times C}(a, a, 1))$ induces a projection of the kernel $H^0(\Oc_{C\times C}(a, a, 0))$ onto the symmetric resp. antisymmetric square through \eqref{rfh894hf894fh0j} depending on whether the image of sections in $H^0(\Oc_{C\times C}(a, a, 0))$ are odd resp. even. The sheaf $\Delta_*K_C^{\otimes(a-1)}$ on $C\times C$ is supported on the diagonal. Hence there is only one section in $H^0(K_C^{\otimes(a - 1)})$ which is odd under the involution, the zero section. As such $\iota^*$ acts as the identity on $H^0(K_C^{\otimes(a-1)})$. Writing $H^0(\Oc_{C\times C}(a, a, 1)) = H^0(\Oc_{C\times C}(a, a, 0))^+ \oplus H^0(\Oc_{C\times C}(a, a, 0))^-$ the lemma now follows.
\end{proof}

\subsubsection{The pairing}\label{rfg78fg783f9h389}
We specialize here to the case $a = b = 3$ and $c = 1$ in \eqref{rfh793hf983hf083}. One of the key, technical innovations made by Donagi and Witten in \cite{DW2} lie in their construction of a duality pairing between SRS deformation parameters and the central extension $H^0(C\times C,\Oc_{C\times C}(3, 3, 1))$ in $\eqref{rfh793hf983hf083}|_{(a, b, c) = (3, 3, 1)}$. We describe this pairing presently. Over the base $\Abb^{0|q}_k$ for $q > 1$, deformation parameters for an SRS family $(\Scl\subset \Xc\ra\Abb^{0|q}_k)$ can, modulo $(\xi^3)$, be represented by triplets of forms $(\chi^m, \chi^n, \chi^{mn})_{m<n}$ where $\chi^m, \chi^n$ are the odd, {versal} deformation parameters representing classes in $H^1(C, K_C^{-1/2})$ and $\chi^{mn}$ are even parameters representing $H^1(C, K_C^{-1})$-torsors (c.f., Remark \ref{rfh983hf893fj390}). Now in the case $(a, b, c) = (3, 3, 1)$ the sequence in \eqref{rfh793hf983hf083} becomes
\[
0
\lra
H^0(\Oc_{C\times C}(3, 3, 0))
\lra
H^0(\Oc_{C\times C}(3, 3, 1))
\stackrel{res}{\lra}
H^0(K_C^{\otimes 2})
\lra 
0.
\]
Given any SRS deformation $(\chi^m, \chi^n, \chi^{mn})$, note $\chi^m\boxtimes \chi^n\in H^2(\Oc_{C\times C}(-1, -1, 0))$ pairs with elements in $\ker res$. As for $\chi^{mn}$, it can be written as a $K_C^{-1}$-valued $(0, 1)$-form in a given coordinate system and so can be paired with elements in $\img~res$. Through the theory of principal value distributions, these pairings extend to a pairing on $H^0(\Oc_{C\times C}(3, 3, 1))$ defined by,
\begin{align}
\left\langle \vp, \big(\chi^m, \chi^n, \chi^{mn}\big)\right\rangle 
\stackrel{\Delta}{=}
\int_{C\times C} \vp(\chi^m\boxtimes \chi^n)
-
2\pi \sqrt{-1}
\int_C
(res ~\vp)\chi^{mn}.
\label{rciuehcuiecioe}
\end{align}
The above pairing does not give rise to a pairing on cohomology since $\chi^{mn}$ does not define a cohomology class. However, in considering the triplet $(\chi^m, \chi^n, \chi^{mn})$ as gauge parameters, Donagi-Witten found that under a gauge transformation\footnote{c.f., \eqref{rfh89hf983fh093j0}.} by $(u^m, u^n, u^{mn})$ that
\begin{align}
\left(
\begin{array}{l}
\chi^m
\\
\chi^n
\\
\chi^{mn}
\end{array}
\right)
\longmapsto 
\left(
\begin{array}{l}
\chi^{\p m}
\\
\chi^{\p n}
\\
\chi^{\p mn}
\end{array}
\right)
=
\left(
\begin{array}{l}
\chi^m + \overline \pt_{Ber} u^m
\\
\chi^n + \overline \pt_{Ber} u^n
\\
\chi^{mn} + \overline \pt_{Ber} u^{nm} + \chi^m u^n - \chi^n u^m
\end{array}
\right)
\label{rg73gf983hf0j30}
\end{align}
where $\overline \pt_{Ber}$ is as in \eqref{rfh983hf983f09j93j}. A proof of the following can be found in \cite[\S3]{DW2}.

\begin{PROP}\label{rf873f893hf3j0}
The pairing in \eqref{rciuehcuiecioe} is invariant under gauge transformations in \eqref{rg73gf983hf0j30}, i.e., for any $\vp\in H^0(\Oc_{C\times C}(3, 3, 1))$ that 
\[
\left\langle \vp, \big(\chi^m, \chi^n, \chi^{mn}\big)\right\rangle = \left\langle \vp, \big(\chi^{\p m}, \chi^{\p n}, \chi^{\p mn}\big)\right\rangle
\]
if and only if the triplets $\big(\chi^m, \chi^n, \chi^{mn}\big)$ and $\big(\chi^{\p m}, \chi^{\p n}, \chi^{\p mn}\big)$ are related as in \eqref{rg73gf983hf0j30} for some $(u^m, u^n, u^{mn})$.
\qed
\end{PROP}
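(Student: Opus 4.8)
The plan is to verify directly that the right-hand side of \eqref{rciuehcuiecioe} is unchanged when one replaces $(\chi^m,\chi^n,\chi^{mn})$ by $(\chi^{\p m},\chi^{\p n},\chi^{\p mn})$ as in \eqref{rg73gf983hf0j30}, and then argue the converse: that any pair of triplets inducing the same pairing for all $\vp$ must differ by a gauge transformation. For the ``if'' direction I would compute the variation term by term. Write $\delta\chi^m = \overline\pt_{Ber}u^m$, $\delta\chi^n = \overline\pt_{Ber}u^n$, and $\delta\chi^{mn} = \overline\pt_{Ber}u^{nm} + \chi^m u^n - \chi^n u^m$. The change in the first integral $\int_{C\times C}\vp\,(\chi^m\boxtimes\chi^n)$ splits, to first order in the $u$'s and modulo terms that vanish for degree reasons, into $\int_{C\times C}\vp\big((\overline\pt_{Ber}u^m)\boxtimes\chi^n + \chi^m\boxtimes(\overline\pt_{Ber}u^n)\big)$. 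Since $\vp$ is $\overline\pt$-closed away from the diagonal but has a simple pole along $\Delta$, integration by parts in the $\overline\pt_{Ber}$ produces exactly a residue contribution along the diagonal (a principal-value / Cauchy–Pompeiu argument, as in the theory of principal-value distributions invoked before \eqref{rciuehcuiecioe}), and this residue is $-2\pi\sqrt{-1}\int_C (res\,\vp)(\chi^m u^n - \chi^n u^m)$ up to sign. Meanwhile the change in the second integral $-2\pi\sqrt{-1}\int_C (res\,\vp)\chi^{mn}$ is $-2\pi\sqrt{-1}\int_C (res\,\vp)\big(\overline\pt_{Ber}u^{nm} + \chi^m u^n - \chi^n u^m\big)$; the $\overline\pt_{Ber}u^{nm}$ piece integrates to zero over the closed curve $C$ (Stokes), and the remaining $-2\pi\sqrt{-1}\int_C(res\,\vp)(\chi^m u^n - \chi^n u^m)$ is designed precisely to cancel the boundary/residue term from the first integral. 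So the total variation is zero.

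The key computational step — and the main obstacle — is the integration-by-parts that converts $\int_{C\times C}\vp\,(\overline\pt_{Ber}u)\boxtimes\chi$ into a diagonal residue plus a term $\int_{C\times C}(\overline\pt_{Ber}\vp)\wedge\cdots$ that vanishes because $\vp$ is holomorphic off $\Delta$. One has to be careful that $\vp$ is only meromorphic along the diagonal: the ``$\overline\pt$'' of $\vp$ in the distributional sense is not zero but a delta-current supported on $\Delta$ with coefficient $res\,\vp$, and extracting this correctly is where the principal-value distribution machinery does its work. The sign bookkeeping is also delicate because of the fermionic nature of the half-forms, governed by the convention \eqref{rfh748f4hf983h0f3}; getting the signs to match requires care but no new ideas. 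I would organize this as a lemma identifying $\int_{C\times C}\vp\,(\overline\pt_{Ber}u^m\boxtimes\chi^n)$ with $\pm 2\pi\sqrt{-1}\int_C(res\,\vp)\,u^m\chi^n$, then assemble.

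For the ``only if'' direction, the argument runs through non-degeneracy. The pairing \eqref{rciuehcuiecioe} descends, by the computation just sketched, to a pairing on gauge-equivalence classes of triplets; what must be shown is that this descended pairing is \emph{faithful}, i.e., a triplet pairing to zero against all of $H^0(\Oc_{C\times C}(3,3,1))$ is gauge-equivalent to zero. Suppose $\langle\vp,(\chi^m,\chi^n,\chi^{mn})\rangle = \langle\vp,(\chi^{\p m},\chi^{\p n},\chi^{\p mn})\rangle$ for all $\vp$; set $\psi^m = \chi^m - \chi^{\p m}$, etc. Pairing with $\vp\in\ker res = H^0(\Oc_{C\times C}(3,3,0))$ isolates the cohomology-class part and, via Serre duality between $H^1(C,K_C^{-1/2})$ and $H^0(C,K_C^{3/2})$ — which appears in the Künneth decomposition of $H^0(\Oc_{C\times C}(3,3,0))$ (Lemma \ref{rfh89hf8hf03j093}) — forces the classes of $\psi^m,\psi^n$ to vanish, so $\psi^m = \overline\pt_{Ber}u^m$ and $\psi^n = \overline\pt_{Ber}u^n$ for some $u^m,u^n$. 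Substituting these back and pairing now with $\vp$ of nonzero residue (using the surjection of $res$ onto $H^0(K_C^{\otimes 2})$ from \eqref{rfh793hf983hf083} at $(a,b,c)=(3,3,1)$, together with Serre duality identifying $H^0(K_C^{\otimes 2})^*$ with $H^1(K_C^{-1})$) shows that $\psi^{mn} - (\chi^m u^n - \chi^n u^m)$ is $\overline\pt_{Ber}$-exact, i.e., equals $\overline\pt_{Ber}u^{nm}$ for some $u^{nm}$. Hence $(\chi^m,\chi^n,\chi^{mn})$ and $(\chi^{\p m},\chi^{\p n},\chi^{\p mn})$ differ by the gauge transformation \eqref{rg73gf983hf0j30} with parameters $(u^m,u^n,u^{nm})$, as required. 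Throughout, the only external input beyond the exact sequences already set up is Serre duality on $C$; since this is cited in the paper as \cite[\S3]{DW2}, I would present the above as a sketch and refer to that source for the residue-calculus details.
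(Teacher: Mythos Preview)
The paper does not actually prove this proposition: it states the result and defers entirely to \cite[\S3]{DW2}, marking the statement with a \qed. Your sketch is therefore \emph{more} detailed than what the paper supplies, and your closing remark---presenting the computation as a sketch and referring to \cite{DW2} for the residue calculus---is precisely what the paper itself does.

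Your outline of the argument (Cauchy--Pompeiu integration by parts producing a diagonal residue that cancels against the variation of the second integral; non-degeneracy via Serre duality for the converse) is the correct shape of the Donagi--Witten computation. One point to tighten if you expand this: the variation of $\int_{C\times C}\vp\,(\chi^m\boxtimes\chi^n)$ also contains the cross term $\int_{C\times C}\vp\,(\overline\pt_{Ber}u^m\boxtimes\overline\pt_{Ber}u^n)$, which you dismiss as vanishing ``for degree reasons.'' There is no degree obstruction here---the integrand is a top form---so this term must be handled directly (e.g., by a second integration by parts, or by decomposing the gauge transformation into two single-variable steps and tracking how the $\chi^{mn}$ correction composes). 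This is routine but not automatic; since you are deferring the fine points to \cite{DW2} anyway, it does not undermine the proposal.
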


\begin{REM}
\emph{More precisely, Donagi and Witten in \cite{DW2} only consider the case $\Abb^{0|q}_k|_{q = 2}$ in Proposition \ref{rf873f893hf3j0}. The general case $\Abb^{0|q}_k$ for $q > 2$ was treated in \cite{BETTANAL}.}
\end{REM}
%\noindent
%As a consequence of Proposition \ref{rf873f893hf3j0} the construction in \eqref{rciuehcuiecioe} gives a pairing between spaces $H^0(\Oc_{|\Scl|\times |\Scl|}(3, 3, 1))\otimes Def_{\Abb^{0|2}_\Cbb}\Scl \ra \Cbb$. 

\subsection{Multiplication by the Szeg\"o kernel differential}\label{rhf784f89hf09j39f03}
Recall that our objective in this section is to obtain a relation between D'Hoker-Phong's period matrix formula in Proposition \ref{rfh984hf8hf0903fj93} and Donagi-Witten's gauge pairing in \eqref{rciuehcuiecioe}. As the quadratic component involves the Szeg\"o kernel differential, we firstly need to see how this kernel relates to elements in the central extension $H^0(\Oc_{C\times C}(3, 3, 1))$. Fix a spin curve $(C, K_C^{1/2})$. Moving to the product, the space of \emph{Szeg\"o kernel differentials} on $(C, K_C^{1/2})$ is $H^0(\Oc_{C\times C}(1, 1, 1))$. 

\begin{LEM}\label{rf8938f38jf03jf0}
In the case where $(C, K_C^{1/2})$ is generic and even, restriction to the diagonal gives an isomorphism $res: H^0(\Oc_{C\times C}(1, 1, 1))\stackrel{\sim}{\ra} \Cbb$.
\end{LEM}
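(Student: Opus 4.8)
The plan is to use the long exact sequence \eqref{rf748fh4f0j93} specialized to $(a,b,c) = (1,1,1)$. In that case the restriction-to-diagonal map reads
\[
0 \lra H^0(\Oc_{C\times C}(1,1,0)) \lra H^0(\Oc_{C\times C}(1,1,1)) \stackrel{res.}{\lra} H^0(K_C^{\otimes 0}) = H^0(\Oc_C) \lra H^1(\Oc_{C\times C}(1,1,0)) \lra \cdots
\]
First I would identify the outer terms. By the K\"unneth formula $H^0(\Oc_{C\times C}(1,1,0)) \cong H^0(C,K_C^{1/2})\otimes H^0(C,K_C^{1/2})$, and since $(C,K_C^{1/2})$ is generic and even we have $h^0(K_C^{1/2}) = 0$ (see f.t. \ref{rhf784gf784hf8}), so this term vanishes. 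Hence $res.$ is injective. On the other end, $H^0(\Oc_C) \cong \Cbb$ since $C$ is connected and proper. So it remains to show $res.$ is surjective onto $\Cbb$, equivalently that the connecting map $H^0(\Oc_C) \ra H^1(\Oc_{C\times C}(1,1,0))$ is zero.

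The cleanest way I would argue surjectivity is to exhibit, for a generic even spin curve, a Szeg\"o kernel differential whose restriction to the diagonal is a nonzero constant. The classical Szeg\"o kernel $S(x,y)$ is precisely such an object: it is the unique section of $K_C^{1/2}\boxtimes K_C^{1/2}$ with a simple pole along the diagonal of residue $1$, and it exists and is unique exactly when $h^0(K_C^{1/2}) = 0$ (this is the standard Szeg\"o/Fay construction, and the uniqueness is the injectivity just established). Near the diagonal $S(x,y) = \bigl(\tfrac{1}{x-y} + O(1)\bigr)\,dx^{1/2}dy^{1/2}$, so $res.(S)$ is a nonzero element of $H^0(\Oc_C) = \Cbb$ — concretely the constant cut out by the leading pole coefficient. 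Since $res.$ is a linear map into a one-dimensional space hitting a nonzero element, it is surjective; combined with injectivity this gives the isomorphism $res.: H^0(\Oc_{C\times C}(1,1,1)) \stackrel{\sim}{\ra}\Cbb$, and in particular $h^0(\Oc_{C\times C}(1,1,1)) = 1$, the one-dimensionality matching the fact that the Szeg\"o kernel differential is unique.

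The main obstacle is justifying that a Szeg\"o kernel differential with nonzero residue actually exists on a generic even spin curve — i.e. ruling out the degenerate possibility that \emph{every} section of $\Oc_{C\times C}(1,1,1)$ restricts to zero on the diagonal, which would force $h^0(\Oc_{C\times C}(1,1,1)) = h^0(\Oc_{C\times C}(1,1,0)) = 0$. One can avoid invoking the analytic Szeg\"o kernel theory directly by a dimension count on the connecting homomorphism: one shows $h^1(\Oc_{C\times C}(1,1,0))$, via K\"unneth and Serre duality, is $2\,h^0(K_C^{1/2})h^1(K_C^{1/2}) = 0$ since $h^0(K_C^{1/2}) = 0$, so the long exact sequence forces $res.$ to be onto $\Cbb$ automatically. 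This is the more economical route and I would present it as the proof: injectivity from vanishing of $H^0(\Oc_{C\times C}(1,1,0))$, surjectivity from vanishing of $H^1(\Oc_{C\times C}(1,1,0))$, both consequences of $h^0(K_C^{1/2}) = 0$, with the existence of the Szeg\"o kernel being a corollary rather than an input.
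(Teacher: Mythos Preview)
Your final ``economical route'' is exactly the paper's proof: specialize the long exact sequence \eqref{rf748fh4f0j93} to $(a,b,c)=(1,1,1)$, then use K\"unneth to see that both $H^0(\Oc_{C\times C}(1,1,0))\cong \otimes^2 H^0(K_C^{1/2})$ and $H^1(\Oc_{C\times C}(1,1,0))\cong \big(H^0(K_C^{1/2})\otimes H^1(K_C^{1/2})\big)\oplus\big(H^1(K_C^{1/2})\otimes H^0(K_C^{1/2})\big)$ vanish since $h^0(K_C^{1/2})=0$, whence $res$ is an isomorphism onto $H^0(\Oc_C)\cong\Cbb$. Your earlier detour through the analytic existence of the Szeg\"o kernel is unnecessary (and mildly circular, since the paper uses this lemma to \emph{define} the normalized Szeg\"o kernel), but you correctly discard it in favor of the dimension count.
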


\begin{proof}
Note, we cannot apply Lemma \ref{fh98hf89hf030f3} directly. It will still hold however and we can see this as follows. In general $H^0(\Oc_{C\times C}(1, 1, 1))$ will sit in a long exact sequence as in \eqref{rf748fh4f0j93}. By the K\"unneth decomposition we have,
\begin{align*}
H^0(\Oc_{C\times C}(1, 1, 0)) \cong \otimes^2 H^0(K_C^{1/2})
&&
\mbox{and}
&&
H^1(\Oc_{C\times C}(1, 1, 0)) \cong H^0(K_C^{1/2})\otimes H^1(K_C^{1/2}).
\end{align*}
Hence $h^i(\Oc_{C\times C}(1, 1, 0)) = 0$ for $i = 0, 1$ by virtue of $(C, K_C^{1/2})$ being generic and even. This lemma now follows from exactness in \eqref{rf748fh4f0j93}.
\end{proof}

\noindent 
On a generic, even spin curve $(C, K_C^{1/2})$ the \emph{normalised} Szeg\"o kernel differential is the image of the unit in $\Cbb$ under the restriction isomorphism $res$ in Lemma \ref{rf8938f38jf03jf0}. And so, to any generic, even $(C, K_C^{1/2})$ set $\mathbb S_{(C, K_C^{1/2})} \stackrel{\Delta}{=} res^{-1}1$. From Lemma \ref{rfh89hf8hf03j093} then we have:

\begin{COR}\label{rfg87gf87h3f98h397}
$\mathbb S_{(C, K_C^{1/2})}$ is even (i.e., invariant) under the involution on $C\times C$.\qed
\end{COR}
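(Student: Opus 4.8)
The plan is to exploit the fact that, by Lemma \ref{rf8938f38jf03jf0}, the space $H^0(\Oc_{C\times C}(1, 1, 1))$ of Szeg\"o kernel differentials on a generic, even spin curve is one-dimensional, and that $\mathbb S_{(C, K_C^{1/2})} = res^{-1}1$ is by construction the distinguished nonzero element of this line. The involution $\iota : (x, y) \mapsto (y, x)$ on $C \times C$ is compatible with the sheaf $\Oc_{C\times C}(1,1,1)$ in the sense that $\iota^*$ acts as an endomorphism of $H^0(\Oc_{C\times C}(1, 1, 1))$ — this uses the sign convention \eqref{rfh748f4hf983h0f3}, under which $dx^{1/2}\wedge dy^{1/2} = dy^{1/2}\wedge dx^{1/2}$, so that pulling back a section of the form \eqref{rf9hf983h0390fj3} with $a = b = 1$, $c = 1$ lands back in the same space. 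First I would record this, noting that $(x-y)^{-1}$ picks up a sign under $\iota$ while $dx^{1/2}dy^{1/2}$ is $\iota$-invariant, so the net effect of $\iota^*$ on the \emph{polar part} of a section of $\Oc_{C\times C}(1,1,1)$ is multiplication by $-1$ — but we must be careful, as this only describes the leading behaviour along the diagonal, not the action on the whole section.

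The cleanest route, then, is through the restriction map rather than through an explicit coordinate computation. Since $\iota$ fixes the diagonal $\Delta$ pointwise, it acts trivially on $\Delta_*K_C$, and hence $\iota^*$ commutes with $res : H^0(\Oc_{C\times C}(1,1,1)) \to H^0(\Delta^*\Oc_{C\times C}(1,1,1))$; more precisely, there is a $\pm$ subtlety in how $res$ interacts with $\iota^*$ coming from the pole, exactly as analysed in the proof of Lemma \ref{rfh89hf8hf03j093} where it is observed that on the diagonal $\iota^*$ acts as the identity (the only $\iota$-odd section supported on $\Delta$ is the zero section). So $res(\iota^* \mathbb S_{(C, K_C^{1/2})}) = res(\mathbb S_{(C, K_C^{1/2})}) = 1$. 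Because $res$ is an \emph{isomorphism} onto $\Cbb$ by Lemma \ref{rf8938f38jf03jf0}, it follows that $\iota^*\mathbb S_{(C, K_C^{1/2})} = \mathbb S_{(C, K_C^{1/2})}$, i.e. $\mathbb S_{(C, K_C^{1/2})}$ is even under $\iota$, as claimed.

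Alternatively — and this is perhaps the slicker phrasing to put in the paper — one observes that $H^0(\Oc_{C\times C}(1,1,1))$ decomposes into $\iota$-even and $\iota$-odd parts, $H^0(\Oc_{C\times C}(1,1,1))^+ \oplus H^0(\Oc_{C\times C}(1,1,1))^-$, exactly as in Lemma \ref{rfh89hf8hf03j093} applied with the relevant half-integral weights; the $\iota$-odd part injects via $res$ into the $\iota$-odd sections of $\Delta_*K_C$, which is the zero space, so $H^0(\Oc_{C\times C}(1,1,1))^- = (0)$. Hence the whole one-dimensional space is $\iota$-even, and in particular so is $\mathbb S_{(C, K_C^{1/2})}$.

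The one genuine point requiring care — the main obstacle — is the bookkeeping of signs in the interaction of $\iota^*$ with $res$ when the pole order $c$ is odd and the weights $a = b$ are half-integral; one must verify that the residue-along-the-diagonal of an $\iota$-invariant Szeg\"o differential is genuinely $\iota$-invariant (hence possibly nonzero) rather than $\iota$-anti-invariant (hence forced to vanish, which would collapse the argument). This is the same computation already carried out for integral weights in the proof of Lemma \ref{rfh89hf8hf03j093}, and the upshot there — that $\iota^*$ fixes the diagonal contribution — carries over verbatim because it depends only on $\iota$ fixing $\Delta$ pointwise and on the convention \eqref{rfh748f4hf983h0f3}, not on the parity of the weights. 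Once this sign is pinned down, everything else is immediate from Lemma \ref{rf8938f38jf03jf0}.
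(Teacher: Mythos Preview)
Your proposal is correct and matches the paper's approach: the paper simply cites Lemma \ref{rfh89hf8hf03j093} (applied at $a=b=1$, which is odd) together with the isomorphism $res$ from Lemma \ref{rf8938f38jf03jf0}, exactly your ``slicker phrasing''. Your first variant, showing $res(\iota^*\mathbb S)=res(\mathbb S)=1$ and invoking injectivity of $res$, is an equivalent repackaging of the same two ingredients; the sign bookkeeping you flag is already absorbed into the proof of Lemma \ref{rfh89hf8hf03j093}, so no additional work is needed.
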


\noindent
Observe then, for any element $\psi\in H^0(\Oc_{C\times C}(a, b, 0))$, that we obtain a $\Cbb$-linear mapping $H^0(\Oc_{C\times C}(a, b, 0)) \ra H^0(\Oc_{C\times C}(a+1, b+1, 1))\stackrel{res}{\ra} H^0(K_C^{(a+b)/2})$ given by,
\begin{align}
\psi
\stackrel{=}{\longmapsto}
\psi\otimes  1
\stackrel{res^{-1}}{\longmapsto}
\psi\otimes \mathbb S_{(C, K_C^{1/2})}
\stackrel{cup~prod.}{\longmapsto}
\psi\cdot \mathbb S_{(C, K_C^{1/2})}
\stackrel{res}{\longmapsto}
res\left(\psi\cdot \mathbb S_{(C, K_C^{1/2})}\right).
\label{rhf97hf893hf83h0}
\end{align}
Alternatively, from the K\"unneth decomposition followed by the cup product we also have a map $H^0(\Oc_{C\times C}(a, b, 0)) \stackrel{\sim}{\ra} H^0(K_C^{a/2})\otimes H^0(K_C^{b/2}) \ra H^0(K_C^{(a+b)/2})$. This map will coincide with \eqref{rhf97hf893hf83h0} as both are defined through the cup product. Hence we have:

\begin{PROP}\label{rfh7f983hf98330}
On a generic, even spin curve $(C, K_C^{1/2})$ the following diagram will commute,
\[
\xymatrix{
\ar[d]_{\cdot\mathbb S_{(C, K_C^{1/2})}} H^0(\Oc_{C\times C}(a, b, 0)) \ar[rr]^{\cong} & & H^0(K_C^{a/2})\otimes H^0(K_C^{b/2})\ar[d]^{cup~prod.}
\\
H^0(\Oc_{C\times C}(a+1, b+1, 1))\ar[rr]^{res}
& &
H^0(K_C^{(a+b)/2})
}
\]
\qed
\end{PROP}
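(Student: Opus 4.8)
The statement is that multiplying by the normalised Szeg\"o kernel differential and then restricting to the diagonal agrees with the ordinary cup product under the K\"unneth identification. The plan is to reduce both composites to the same bilinear map on $H^0(K_C^{a/2})\otimes H^0(K_C^{b/2})$ by working in local coordinates near the diagonal and checking compatibility on decomposable tensors, then invoking bilinearity to conclude. First I would fix a local coordinate $z$ on $C$ near a point of the diagonal, so that on $C\times C$ we have coordinates $(x,y)$ lying over $z$, and record the local form \eqref{rf9hf983h0390fj3} of sections of $\Oc_{C\times C}(a,b,c)$. Write a section $\psi\in H^0(\Oc_{C\times C}(a,b,0))$ locally as $\psi(x,y) = g(x,y)\,dx^{\otimes a/2}\,dy^{\otimes b/2}$ with $g$ holomorphic, and the normalised Szeg\"o kernel differential as $\mathbb S_{(C,K_C^{1/2})}(x,y) = \dfrac{s(x,y)}{x-y}\,dx^{1/2}\,dy^{1/2}$, where $s$ is holomorphic near the diagonal and, by the definition $\mathbb S = res^{-1}1$ from Lemma \ref{rf8938f38jf03jf0}, satisfies $s(z,z) = 1$ in the chosen trivialisation of $\Delta_*K_C^0 = \Delta_*\Oc_C$.

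The key computation is then the following: the cup product $\psi\cdot\mathbb S_{(C,K_C^{1/2})}\in H^0(\Oc_{C\times C}(a+1,b+1,1))$ is locally $\dfrac{g(x,y)s(x,y)}{x-y}\,dx^{\otimes(a+1)/2}\,dy^{\otimes(b+1)/2}$, and applying $res$ (setting $x=y=z$ after cancelling the simple pole, as in the definition of the residue map following \eqref{rf9hf983h0390fj3}) yields $g(z,z)\,s(z,z)\,dz^{\otimes(a+b)/2} = g(z,z)\,dz^{\otimes(a+b)/2}$ since $s(z,z)=1$. On the other hand, the K\"unneth isomorphism followed by the cup product sends $\psi$, written on decomposable tensors as $\psi = \sum_k \alpha_k\boxtimes\beta_k$ with $\alpha_k\in H^0(K_C^{a/2})$, $\beta_k\in H^0(K_C^{b/2})$, to $\sum_k \alpha_k\cdot\beta_k$, which locally is $\sum_k a_k(z)b_k(z)\,dz^{\otimes(a+b)/2}$ where $\alpha_k = a_k\,dz^{\otimes a/2}$, $\beta_k = b_k\,dz^{\otimes b/2}$; and $\sum_k a_k(z)b_k(z) = g(z,z)$ by the very definition of the K\"unneth decomposition evaluated on the diagonal. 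The two outputs coincide locally, hence globally since both constructions are intrinsic, $\Cbb$-linear, and independent of the chosen local trivialisation. An alternative, more structural phrasing — which I might prefer — is to note that the composite $res\circ(\cdot\mathbb S)$ is bilinear in the two K\"unneth factors and that $\mathbb S$ was defined precisely so that its restriction to the diagonal is $1$; thus $res(\alpha\boxtimes\beta\cdot\mathbb S)$ is a bilinear pairing that restricts to $(\alpha,\beta)\mapsto \alpha\cdot\beta$ on the diagonal by the residue-times-value formula, which is exactly the cup product.

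The only genuine subtlety — and the step I expect to be the main obstacle, though it is a mild one — is keeping track of sign conventions and the order of tensor factors coming from the half-integral powers $dx^{1/2}$, $dy^{1/2}$, using the convention \eqref{rfh748f4hf983h0f3} adopted from Donagi--Witten. Since $\mathbb S$ is even under the involution (Corollary \ref{rfg87gf87h3f98h397}), multiplying by it commutes with the involution, and one must check that the K\"unneth decomposition of $\psi\cdot\mathbb S$ matches the decomposition of $\psi$ tensored with the diagonal trivialisation without spurious signs; the fermionic symmetry $dx^{1/2}\wedge dy^{1/2} = dy^{1/2}\wedge dx^{1/2}$ noted after \eqref{rfh748f4hf983h0f3} is exactly what makes this work out consistently. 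Once the sign bookkeeping is pinned down, the commutativity of the diagram is immediate from the local formulas above.
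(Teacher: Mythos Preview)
Your proposal is correct and follows the same idea as the paper: the paper's argument (given in the paragraph immediately preceding the proposition, which is why the statement carries only a \qed) is simply that both composites are ``defined through the cup product,'' i.e., that $res(\psi\cdot\mathbb S_{(C,K_C^{1/2})})$ agrees with the K\"unneth--cup map because $\mathbb S$ was normalised so that $res\,\mathbb S = 1$. Your explicit local computation and your ``more structural phrasing'' both unpack exactly this point; the sign-convention discussion is harmless but not needed for this particular proposition.
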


\begin{REM}
\emph{This description of what we have called the `normalized' Szeg\"o kernel can also be found in \cite{CODVIV}. It is instructive to compare Proposition \ref{rfh7f983hf98330} above with \cite[Thm. 6.6 and Rmk. 6.7]{CODVIV} where multiplication by the normalized Szeg\"o kernel is related to surjectivity of the period morphism.}
\end{REM}

\noindent
In the interests of this article we will further Proposition \ref{rfh7f983hf98330} by considering the case $(a, b, c) = (a, a, 1)$ and the involution invariant subspace $H^0(\Oc_{C\times C}(a, a, 1))^+\subset H^0(\Oc_{C\times C}(a, a, 1))$ characterized in Lemma \ref{rfh89hf8hf03j093}.

\begin{PROP}\label{rgf874gf794h89f}
Multiplication by $\mathbb S_{(C, K_C^{1/2})}$ gives a commutative diagram
\[
\xymatrix{
\ar[dd]H^0(\Oc_{C\times C}(a, a, -1))^+ \ar[r]&  H^0(\Oc_{C\times C}(a, a, 0))\ar[d]^{\mathbb S_{(C, K_C^{1/2})}}
\\
& H^0(\Oc_{C\times C}(a+1, a+1, 1))\ar[d]^{proj.}
\\
\wedge^2H^0(K_C^{3/2})\ar[r] 
&
H^0(\Oc_{C\times C}(a+1, a+1, 1))^+
}
\]
\end{PROP}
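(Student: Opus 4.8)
The plan is to reduce the assertion to Proposition \ref{rfh7f983hf98330} together with the $\iota$-invariance of the normalized Szeg\"o kernel differential (Corollary \ref{rfg87gf87h3f98h397}), the only genuinely new input being the identification of the upper-left corner. First I would identify the four corners. The sheaf $\Oc_{C\times C}(a,a,-1)\subset\Oc_{C\times C}(a,a,0)$ consists of the bidifferentials vanishing along the diagonal, so by the K\"unneth decomposition its global sections are the kernel of the multiplication (= diagonal-restriction) map $\otimes^2H^0(K_C^{a/2})\ra H^0(K_C^a)$. The antisymmetric tensors are annihilated by this map and, for $a$ odd, are exactly the $\iota$-invariant ones in view of the sign rule \eqref{rfh748f4hf983h0f3} (this is the computation carried out in the proof of Lemma \ref{rfh89hf8hf03j093}); hence $H^0(\Oc_{C\times C}(a,a,-1))^+\cong\wedge^2H^0(K_C^{a/2})$, which is the left vertical arrow (in the displayed diagram this is the case $a=3$). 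The remaining maps are the tautological inclusion $\Oc_{C\times C}(a,a,-1)\subset\Oc_{C\times C}(a,a,0)$, cup product with $\mathbb S_{(C,K_C^{1/2})}\in H^0(\Oc_{C\times C}(1,1,1))$ — which exists by genericity (Lemma \ref{rf8938f38jf03jf0}) — and the projection onto the $\iota$-invariant part.

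With the corners in place the square is essentially formal. Take a decomposable element $\om\wedge\eta\in\wedge^2H^0(K_C^{a/2})=H^0(\Oc_{C\times C}(a,a,-1))^+$. Its image under cup product with $\mathbb S_{(C,K_C^{1/2})}$ is $\iota$-invariant, being a product of two $\iota$-invariant classes (Corollary \ref{rfg87gf87h3f98h397}, together with the invariance of $\om\wedge\eta$ for $a$ odd), so the projection onto the $+$-part acts on it as the identity; consequently the composite around the top and right sides is simply $\om\wedge\eta\longmapsto\mathbb S_{(C,K_C^{1/2})}\cdot(\om\wedge\eta)$, landing in $H^0(\Oc_{C\times C}(a+1,a+1,1))^+$, and this is by construction the lower horizontal map. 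Moreover Proposition \ref{rfh7f983hf98330} pins down $res\big(\mathbb S_{(C,K_C^{1/2})}\cdot(\om\wedge\eta)\big)$ as the cup product $\om\cdot\eta-\eta\cdot\om=0$, so the image in fact lies in $\ker(res)=H^0(\Oc_{C\times C}(a+1,a+1,0))$; concretely, in local coordinates near the diagonal with $\mathbb S_{(C,K_C^{1/2})}=\tfrac{s(x,y)}{x-y}\,dx^{\otimes1/2}dy^{\otimes1/2}$ and $s(x,x)\equiv1$, the product is $\tfrac{s(x,y)(\om(x)\eta(y)-\eta(x)\om(y))}{x-y}\,dx^{\otimes(a+1)/2}dy^{\otimes(a+1)/2}$, whose numerator vanishes along $\Delta$, cancelling the simple pole. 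Since these decomposable classes span $\wedge^2H^0(K_C^{a/2})$, the diagram commutes.

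I expect the main obstacle to be the sign bookkeeping for the involution on half-integral tensor powers — specifically, confirming that for odd $a$ the $+$-eigenspace of $\otimes^2H^0(K_C^{a/2})$ is the antisymmetric subspace (and, on the target with the even power $a+1$, that the invariant part of $\ker(res)$ is the symmetric square), and checking that the divided difference $\tfrac{\om(x)\eta(y)-\eta(x)\om(y)}{x-y}$ is independent of the chosen local coordinate and patches to a global section of $\Oc_{C\times C}(a+1,a+1,0)$; the latter is the standard fact that a section of $p_1^*K_C^{a/2}\otimes p_2^*K_C^{a/2}$ vanishing to first order along $\Delta$ is divisible by a local defining function of $\Delta$, which is exactly the conormal-sheaf input behind \eqref{rf748fh4f0j93}. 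Everything else follows formally from Proposition \ref{rfh7f983hf98330} and Corollary \ref{rfg87gf87h3f98h397}.
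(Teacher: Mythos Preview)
Your proof uses the same key ingredient as the paper's—the $\iota$-invariance of $\mathbb S_{(C,K_C^{1/2})}$ (Corollary \ref{rfg87gf87h3f98h397}), which makes cup product with it parity-preserving—together with the observation that the image lands in $\ker res$ via Proposition \ref{rfh7f983hf98330}. The paper's own proof is much terser: it simply records that the composition beginning at $H^0(\Oc_{C\times C}(a,a,-1))^-$, followed by multiplication by $\mathbb S$ and projection to the $+$-part, vanishes (since the Szeg\"o kernel is even), and says ``whence the proposition.''

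There is, however, a misreading of the diagram. You take $a=3$ (odd) and interpret the left vertical arrow as the identification $H^0(\Oc_{C\times C}(a,a,-1))^+\cong\wedge^2H^0(K_C^{a/2})$, with the bottom arrow carrying the Szeg\"o multiplication. The paper's intended reading—confirmed by Remark \ref{rf784gf78f938h9f} and its use in Lemma \ref{rfg784gf784hf9h383}—has $a=2$ (even): the left vertical arrow is itself multiplication by $\mathbb S_{(C,K_C^{1/2})}$, landing in $H^0(\Oc_{C\times C}(3,3,0))^+=\wedge^2H^0(K_C^{3/2})$, and the bottom arrow is the inclusion from Lemma \ref{rfh89hf8hf03j093}. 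Your identification $(a,a,-1)^+\cong\wedge^2H^0(K_C^{a/2})$ is valid for odd $a$ but fails for $a=2$: there $(2,2,-1)^+$ consists of \emph{symmetric} bidifferentials vanishing on $\Delta$, hence (by the divided-difference argument you yourself sketch) vanishing to order two—this is precisely the content of Remark \ref{rf784gf78f938h9f}, which would be false for odd $a$. The misreading does not invalidate your reasoning, since the parity argument is identical; it only means the arrow carrying the Szeg\"o multiplication sits in a different slot of the square than you assumed.
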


\begin{proof}
From Corollary \ref{rfg87gf87h3f98h397} and our sign convention in \eqref{rfh748f4hf983h0f3} the following composition vanishes
\begin{align*}
\xymatrix{
H^0(\Oc_{C\times C}(a, a, -1))^-\ar[r] & H^0(\Oc_{C\times C}(a, a, 0))\ar[d]^{\cdot\mathbb S_{(C, K_C^{1/2})}}
\\
& H^0(\Oc_{C\times C}(a+1, a+1, 1))\ar[d]^{proj.} 
\\
& H^0(\Oc_{C\times C}(a+1, a+1, 1))^+\ar[rrr]^{res}  & & & H^0(K^a_C)
}
%H^0(\Oc_{C\times C}(a, a, -1))
%&\lra 
%H^0(\Oc_{C\times C}(a, a, 0))
%\\
%&\stackrel{\cdot\mathbb S_{(C,K_C^{1/2})}}{\lra}
%H^0(\Oc_{C\times C}(a+1, a+1, 1))
%\\
%&\lra
%H^0(\Oc_{C\times C}(a+1, a+1, 1))^+
%\\
%&\stackrel{res}{\lra}
%H^0(K_C^a)
\end{align*}
whence the proposition.
\end{proof}

\begin{REM}\label{rf784gf78f938h9f}
\emph{Note that $H^0(\Oc_{C\times C}(a, a, -1))^+ = H^0(\Oc_{C\times C}(a, a, -2))^+$ in Proposition \ref{rgf874gf794h89f}.}
\end{REM}

%\begin{LEM}\label{rf9fh98h3f0j309j3}
%On a generic, even spin curve $(C, K_C^{1/2})$, multiplication by the normalised Szeg\"o kernel in Proposition \ref{rfh7f983hf98330} specialised to $(a, b) = (a, a)$ commutes with the projection onto the invariant subspace $H^0(\Oc_{C\times C}(a, a, 1))^+$ for any $a$.
%\end{LEM}
%
%
%\begin{proof}
%We want to argue that the multiplication $\cdot \mathbb S_{(C, K_C^{1/2})} : H^0(\Oc_{C\times C}(a, a, 0))\ra H^0(\Oc_{C\times C}(a+1, a+1, 1))$ will induce a map $\wedge^2H^0(K_C^{a/2}) \ra H^0(\Oc_{C\times C}(a+1, a+1, 1))^+$. To argue this, note that the Szeg\"o kernel differential is odd under the involution $\iota : C\times C\ra C\times C$. Similarly, via $H^0(\Oc_{C\times C}(a, a, 0))\cong \otimes^2H^0(K_C^{a/2}) \cong \wedge^2H^0(K_C^{a/2})\oplus Sym^2H^0(K_C^{a/2})$, the component in $\wedge^2H^0(K_C^{a/2})$ will also be odd under the involution. Hence the product $(\mu\wedge \nu)\cdot \mathbb S_{(C, K_C^{1/2})}$ will be even under the involution and so will lie in $H^0(\Oc_{C\times C}(a, a, 1))^+$ by definition.
%\end{proof}

\subsection{D'Hoker-Phong and Donagi-Witten}
Recall that the gauge pairing in \eqref{rciuehcuiecioe} is defined on sections in $H^0(\Oc_{C\times C}(3, 3, 1))$. As our ultimate applications will be in genus $g = 2$, we consider here the sheaf  $\Oc_{C\times C}(3, 3, 1)$ in genus $g = 2$. 

\begin{PROP}\label{rhf8hf98hf030}
Let $(C, K_C^{1/2})$ be a generic, even spin curve of genus $g = 2$. For any global section $\vp\in H^0(\Oc_{C\times C}(3, 3, 1))$ there will exist $\phi\in H^0(\Oc_{C\times C}(2, 2, 0))$ such that $res~\vp = res(\phi\cdot\mathbb S_{(C, K_C^{1/2})})$. 
\end{PROP}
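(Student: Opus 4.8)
The plan is to reduce the statement to a surjectivity property of the restriction and multiplication maps, and then invoke M.\ Noether's theorem on the surjectivity of the multiplication $\mathrm{Sym}^2 H^0(K_C) \to H^0(K_C^{\otimes 2})$ in genus $g = 2$. First, I would apply the short exact sequence \eqref{rfh793hf983hf083} with $(a, b, c) = (3, 3, 1)$, which gives that $res : H^0(\Oc_{C\times C}(3, 3, 1)) \to H^0(K_C^{\otimes 2})$ is surjective. So the image $res~\vp$ ranges over all of $H^0(K_C^{\otimes 2})$, and it suffices to show that every element of $H^0(K_C^{\otimes 2})$ is of the form $res(\phi \cdot \mathbb S_{(C, K_C^{1/2})})$ for some $\phi \in H^0(\Oc_{C\times C}(2, 2, 0))$.

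Next I would invoke Proposition \ref{rfh7f983hf98330} with $(a, b) = (2, 2)$: the composite $H^0(\Oc_{C\times C}(2, 2, 0)) \xrightarrow{\cdot \mathbb S_{(C, K_C^{1/2})}} H^0(\Oc_{C\times C}(3, 3, 1)) \xrightarrow{res} H^0(K_C^{\otimes 2})$ coincides with the composite $H^0(\Oc_{C\times C}(2, 2, 0)) \xrightarrow{\sim} H^0(K_C) \otimes H^0(K_C) \xrightarrow{cup} H^0(K_C^{\otimes 2})$, using the K\"unneth isomorphism (valid since $K_C = K_C^{2/2}$ and the relevant $H^1$-terms vanish for $g = 2$, as $K_C^{1/2}$ is generic even and $K_C$ is the canonical bundle). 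Thus the image of $\phi \mapsto res(\phi \cdot \mathbb S_{(C, K_C^{1/2})})$ is precisely the image of the multiplication map $\otimes^2 H^0(K_C) \to H^0(K_C^{\otimes 2})$, which factors through $\mathrm{Sym}^2 H^0(K_C)$.

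Finally, I would invoke M.\ Noether's theorem: for a curve $C$ of genus $g = 2$, the multiplication map $\mathrm{Sym}^2 H^0(K_C) \to H^0(K_C^{\otimes 2})$ is surjective — indeed in genus $2$ it is an isomorphism of $3$-dimensional spaces, since $C$ is hyperelliptic and $K_C$ is the pullback of $\Oc_{\Pbb^1}(1)$. (This is the result the introduction flags as "a famous result by M.\ Noether on the surjectivity of the infinitesimal period map in genus $g = 2$", to be stated as Proposition \ref{rhf8hf98hf030}'s backbone.) Combining the three steps: given $\vp$, set $\psi = res~\vp \in H^0(K_C^{\otimes 2})$; by Noether's surjectivity there is $\phi \in H^0(\Oc_{C\times C}(2, 2, 0))$ with $cup(\phi) = \psi$; by Proposition \ref{rfh7f983hf98330} this $\phi$ satisfies $res(\phi \cdot \mathbb S_{(C, K_C^{1/2})}) = \psi = res~\vp$, as required.

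The main obstacle I anticipate is not any single deep step but rather the bookkeeping: verifying that the hypotheses of Lemma \ref{fh98hf89hf030f3} (hence the exactness \eqref{rfh793hf983hf083}) and of Proposition \ref{rfh7f983hf98330} genuinely apply with the specific twist indices $(3, 3, 1)$ and $(2, 2, 0)$ at $g = 2$ — in particular confirming $H^1(\Oc_{C\times C}(2, 2, 0)) = (0)$ and $H^1(\Oc_{C\times C}(3, 3, 0)) = (0)$ via K\"unneth and Serre duality — and being careful that the symmetrization (the $\mathbb S$ is $\iota$-invariant by Corollary \ref{rfg87gf87h3f98h397}) does not lose surjectivity, which it does not precisely because in genus $2$ the antisymmetric part $\wedge^2 H^0(K_C)$ maps to zero in $H^0(K_C^{\otimes 2})$ and the symmetric part already surjects.
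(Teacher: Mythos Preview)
Your proof is correct and follows essentially the same route as the paper: apply the commutative diagram of Proposition \ref{rfh7f983hf98330} at $(a,b)=(2,2)$ to identify $res(\phi\cdot\mathbb S_{(C,K_C^{1/2})})$ with the cup product, then invoke Noether's theorem on the surjectivity of $\otimes^2 H^0(K_C)\to H^0(K_C^{\otimes 2})$ in genus $2$. One small correction to your bookkeeping remarks: $H^1(\Oc_{C\times C}(2,2,0))$ is \emph{not} zero (K\"unneth gives a summand $H^0(K_C)\otimes H^1(K_C)\cong\Cbb^2$), but this is irrelevant since the K\"unneth isomorphism on $H^0$ holds unconditionally, and your first step (surjectivity of $res$ on $H^0(\Oc_{C\times C}(3,3,1))$) is likewise not needed --- it suffices that $res~\vp$ lands in $H^0(K_C^{\otimes 2})$.
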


\begin{proof}
In genus $g = 2$, a famous theorem by Max Noether states that the cup product $\otimes^2H^0(K_C)\ra H^0(K_C^2)$ will be surjective \cite[p. 233]{ARABGRIFF}. Hence, for any $\om\in H^0(K_C^2)$ there will exist some $\widetilde\om \in \otimes^2H^0(K_C)$ such that $\widetilde \om\mapsto \om$. This proposition now follows from commutativity in Proposition \ref{rfh7f983hf98330} specialized to $(a, b, c) = (2, 2, 0)$.
\end{proof}

\begin{REM}\label{rfh74fh9fh39jf093}
\emph{Since Max Noether's theorem also holds for genus $g = 1$ and non-hyperelliptic curves of genus $g> 2$, then so does Proposition \ref{rhf8hf98hf030} for such curves.}
\end{REM}

%\noindent 
%In the case elements in $H^0(\Oc_{C\times C}(3, 3, 1))^+$ we have:
%
%
%\begin{COR}\label{r4hf784hf9jf9j30}
%With the assumptions from Proposition \ref{rhf8hf98hf030}, let $(\om_\ell)_\ell$ be a basis of global differentials on $C$. Then for any $\vp\in H^0(\Oc_{C\times C}(3, 3, 1))^+$ there will exist $\ell_1, \ell_2$ such that $res~\vp = res\big(\om_{\ell_1}\cdot\mathbb S_{(C, K_C^{1/2})}\cdot \om_{\ell_2}\big)$.
%\end{COR}
%
%\begin{proof}
%From Lemma \ref{rf9fh98h3f0j309j3} and Proposition \ref{rhf8hf98hf030} we can find global differentials $\mu, \nu\in H^0(K_C)$ such that $res~\vp = res\big((\mu\wedge\nu)\cdot \mathbb S_{(C, K_C^{1/2})}\big)$. If $res~\vp\neq 0$ then $\mu\wedge \nu\neq 0$. As such $\{\mu, \nu\}$ will span a two-dimensional subspace of $H^0(K_C)$. Hence there will exist a basis of global differentials $(\om_\ell)_\ell$ and $\ell_1, \ell_2$ such that $\mu = \om_{\ell_1}$ and $\nu = \om_{\ell_2}$.
%\end{proof}

\subsubsection{The SRS period matrix revisited}
With the gauge pairing from \eqref{rciuehcuiecioe} recall from Proposition \ref{rf873f893hf3j0}: while each summand is not invariant under the gauge transformations in \eqref{rg73gf983hf0j30}, their signed difference will be gauge invariant. Now on a generic, even spin curve $(C, K_C^{1/2})$ and with global differentials $\om_i, \om_\ell\in H^0(K_C)$, let $\phi = \om_i\boxtimes \om_\ell \in H^0(\Oc_{C\times C}(2, 2, 0))$. Its image under the Szeg\"o kernel multiplication $\cdot \mathbb S_{(C, K_C^{1/2})}: H^0(\Oc_{C\times C}(2, 2, 0)) \ra H^1(\Oc_{C\times C}(3, 3, 1))$ from Proposition \ref{rfh7f983hf98330} gives an element which, by D'Hoker and Phong's formula in Proposition \ref{rfh984hf8hf0903fj93}, can be paired with  Kodaira-Spencer representatives of an SRS family. This gives the SRS period matrix. Clearly, it is \emph{not} invariant under the gauge transformations in \eqref{rg73gf983hf0j30}. From Proposition \ref{rf873f893hf3j0} and Proposition \ref{rfh7f983hf98330} however we have the following, gauge invariant extension of D'Hoker-Phong's formula:

\begin{PROP}\label{rfhhf98fj90j3f3f54f35f3}
Let $(\Scl \subset \Xc \ra \Abb^{0|n}_k)$ be a family of generic, even super Riemann surfaces with $\Scl$ modeled on $(C, K_C^{1/2})$. Let $(\om_\ell)_\ell$ be a basis of global differentials on the underlying curve $C\subset \Scl$; $\mathbb S_{(C, K_C^{1/2})}$ the normalized, Szeg\"o kernel differential and let $(\chi^m, \chi^n , \chi^{mn})$ be deformation parameters representing this family.\footnote{c.f., Theorem \ref{rfhhf893f0j39f03}.} Then the  quadratic component of $\widehat\Om_{\Xc/\Abb^{0|n}_k, i\ell}$ is given by its $(i,\ell)^{(m, n)}$-th entry,
\begin{align}
\widehat\Om^{mn}_{\Xc/\Abb^{0|n}_k, i\ell}&
\notag
\\
=~&
\int_{C\times C}
\big(\om_i\cdot \mathbb S_{(C, K_C^{1/2})}\cdot\om_\ell\big)\big(\chi^m\boxtimes \chi^n\big)
-
2\pi\sqrt{-1}
\int_C
\big(\om_i\cdot\om_\ell\big)\chi^{mn}
\notag
\\
=~&
\left\langle 
\om_i\cdot \mathbb S_{(C, K_C^{1/2})}\cdot \om_\ell, 
\big(\chi^m, \chi^n, \chi^{mn}\big)
\right\rangle
\label{rhf784gf784fh9h3f083}
\end{align}
where `$\cdot$' denotes the cup product on cohomology.\qed
\end{PROP}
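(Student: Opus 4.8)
The plan is to show the two displayed expressions in \eqref{rhf784gf784fh9h3f083} are equal and that each computes the quadratic component, by reducing everything to the form of D'Hoker--Phong's formula in Proposition \ref{rfh984hf8hf0903fj93} and then recognizing the answer inside Donagi--Witten's gauge pairing \eqref{rciuehcuiecioe}. First I would unpack the cup-product element $\om_i\cdot\mathbb S_{(C,K_C^{1/2})}\cdot\om_\ell$. Writing $\phi=\om_i\boxtimes\om_\ell\in H^0(\Oc_{C\times C}(2,2,0))$, Proposition \ref{rfh7f983hf98330} (with $(a,b)=(2,2)$) identifies $\phi\cdot\mathbb S_{(C,K_C^{1/2})}\in H^0(\Oc_{C\times C}(3,3,1))$ as a lift of $\om_i\cdot\om_\ell\in H^0(K_C^{\otimes 2})$ under $res$; concretely, in a local coordinate near the diagonal it is
\[
\big(\om_i\cdot\mathbb S_{(C,K_C^{1/2})}\cdot\om_\ell\big)(x,y)=\om_i(x)\,S(x,y)\,\om_\ell(y)\,dx^{\otimes 3/2}dy^{\otimes 3/2},
\]
where $S(x,y)$ is exactly the Szeg\"o kernel function appearing in \eqref{rfg47gf74hf8f3}, since $\mathbb S_{(C,K_C^{1/2})}=res^{-1}1$ is the normalized kernel differential from Lemma \ref{rf8938f38jf03jf0}. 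So the section being paired is precisely D'Hoker--Phong's integrand kernel, now viewed as an element of the central extension $H^0(\Oc_{C\times C}(3,3,1))$ on which the gauge pairing lives.

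Next I would expand the gauge pairing \eqref{rciuehcuiecioe} applied to $\vp=\om_i\cdot\mathbb S_{(C,K_C^{1/2})}\cdot\om_\ell$ and the triplet $(\chi^m,\chi^n,\chi^{mn})$. By definition it is the signed difference of two terms: $\int_{C\times C}\vp\,(\chi^m\boxtimes\chi^n)$ and $-2\pi\sqrt{-1}\int_C(res~\vp)\,\chi^{mn}$. The first term, written out in coordinates, is exactly the double integral $\int_{C\times C}\om_i(x)\,\chi^m(x,\overline x)d\overline x\,S(x,y)\,\chi^n(y,\overline y)d\overline y\,\om_\ell(y)$ of Proposition \ref{rfh984hf8hf0903fj93}, once one matches the $d\overline z$ weights of $\chi^m,\chi^n$ against the $K_C^{-1/2}$-valued $(0,1)$-form nature recorded in \S\ref{fnckbcubcuiencioe} and uses that $\chi^m\boxtimes\chi^n$ lands in $H^2(\Oc_{C\times C}(-1,-1,0))$, the natural partner of $\ker res$. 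The second term uses $res~\vp=\om_i\cdot\om_\ell$ (from Proposition \ref{rfh7f983hf98330}) paired with the even parameter $\chi^{mn}$, a $K_C^{-1}$-valued $(0,1)$-form, giving the single integral $\int_C(\om_i\cdot\om_\ell)\chi^{mn}$. Thus the middle line of \eqref{rhf784gf784fh9h3f083} is literally the expansion of the last line, and I only need to argue the \emph{first} line of \eqref{rhf784gf784fh9h3f083} equals it, i.e.\ that the full quadratic component $\widehat\Om^{mn}_{\Xc/\Abb^{0|n}_k,i\ell}$ equals this signed difference.

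For that, the key point is that Proposition \ref{rfh984hf8hf0903fj93} as stated covers only families over $\Abb^{0|q}_k$ classified by the odd parameters $\chi^m$ alone, i.e.\ the leading contribution; for general $q>1$, by Remark \ref{rfh983hf893fj390}, there are genuine even parameters $\chi^{mn}$ (torsors over $H^1(K_C^{-1})$), and these contribute the second, correction term in \eqref{rciuehcuiecioe}. So I would argue that the period morphism, being built from Berezin integration of relative Berezinian sections \eqref{rfu3hf98j30f9j334ff3}, varies with the full deformation data $(\chi^m,\chi^n,\chi^{mn})$, and that its $\xi_m\xi_n$-coefficient must depend on $\chi^{mn}$ precisely through the gauge-compatible combination forced by Proposition \ref{rf873f893hf3j0}: the quadratic component transforms under \eqref{rg73gf983hf0j30} in the same way as the right-hand side of \eqref{rhf784gf784fh9h3f083}, and agreeing on the $\chi^{mn}=0$ slice (where it reduces to Proposition \ref{rfh984hf8hf0903fj93}) pins it down. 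This is the step I expect to be the main obstacle — making rigorous that the $\chi^{mn}$-dependence of $\widehat\Om^{mn}$ is exactly the $res$-term of \eqref{rciuehcuiecioe} rather than merely gauge-equivalent to it. I would resolve it either by a direct recursion from D'Hoker--Phong's own construction in \cite{CHIRALSPLT, HOKDEFCPLX} (which already contains the $\chi^{mn}$ terms), or by invoking Theorem \ref{rfhhf893f0j39f03} and \cite{BETTANAL} to identify the deformation-parameter description with the Dolbeault-perturbation description and then tracking the second-order term; the remaining computations (matching weights, signs via \eqref{rfh748f4hf983h0f3}, and the factor $2\pi\sqrt{-1}$ from the principal-value/residue normalization) are routine bookkeeping.
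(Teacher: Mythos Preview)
Your approach is essentially the paper's own: identify $\om_i\cdot\mathbb S_{(C,K_C^{1/2})}\cdot\om_\ell$ via Proposition \ref{rfh7f983hf98330} as an element of $H^0(\Oc_{C\times C}(3,3,1))$ whose first summand in the pairing \eqref{rciuehcuiecioe} reproduces D'Hoker--Phong's formula (Proposition \ref{rfh984hf8hf0903fj93}), and then invoke Proposition \ref{rf873f893hf3j0} to recognize the full pairing as its gauge-invariant extension. The paper presents the result with a bare \qed\ after exactly this preamble, treating the $\chi^{mn}$ correction as forced by gauge invariance rather than deriving it independently; your more careful discussion of why the $\chi^{mn}$-dependence must be the $res$-term is additional rigor the paper does not supply, and your proposed resolution via D'Hoker--Phong's recursion or Theorem \ref{rfhhf893f0j39f03} is in line with how the paper implicitly relies on those sources.
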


\begin{REM}
\emph{It is instructive to compare Proposition \ref{rfhhf98fj90j3f3f54f35f3} above with remarks by Witten in \cite[p. 103]{WITTRS}, suggesting a gauge-invariant formula for the SRS period matrix. We claim this is precisely Proposition \ref{rfhhf98fj90j3f3f54f35f3}.}
\end{REM}

\section{Deformations of super Riemann surfaces}
\label{rfg674gf84f9h843f}

\subsection{Schiffer variations}
It is a classical result in the geometry of compact, complex manifolds that the versal deformation space of its complex structures can be generated by Schiffer variations at sufficiently many points. We will show how these ideas generalise to the deformation theory of super Riemann surfaces. We begin with the case of curves, following \cite[Ch. XI]{ARABGRIFF}. 

\subsubsection{On curves}
Let $C$ be a curve, $T_C$ its tangent sheaf and $P\in C$ a point. Viewing $P$ as a closed subspace of $C$, its ideal sheaf sequence tensored with $T_C(P)$ gives the following sequence\footnote{see e.g., \cite[p. 296]{HARTALG} for a more general case} of sheaves on $C$,
\begin{align}
0 \lra T_C \lra T_C(P) \lra \underline\Cbb_P\lra 0
\label{fvgf97h389fh830fh}
\end{align}
where $\underline \Cbb_P$ is the skyscraper sheaf at $P$. 
This sequence gives rise to a long exact sequence on cohomology containing the following piece,
\begin{align}
0 
\lra 
H^0(T_C) 
\lra  
H^0(T_C(P))
\lra
H^0(\underline \Cbb_P)
\stackrel{\dt}{\lra}
H^1(T_C)
%\xymatrix{
%0 \ar[r] & H^0(T_C) \ar[r] & H^0(T_C(P)) \ar[r] & H^0(T_C(P)\otimes_{\Oc_C}\underline \Cbb_P) \ar[r] & H^1(T_C).
%}
\label{rf83f03jf09j3}
\end{align}
In the case where the genus of $C$, $g$, satisfies $g > 1$ we know that $h^0(T_C) = h^0(T_C(P)) = 0$ since both  $T_C$ and $T_C(P)$ have negative degree.\footnote{\label{rhf4hf98h89fhf0}From e.g., \cite[p. 296]{HARTALG}, $\deg T_C(P) = \deg T_C + 1$ and $\deg T_C = 2 - 2g$. And so $\deg T_C$ and $\deg T_C(P)$ are both negative for $g > 1$. }
Therefore the boundary map on cohomology gives an embedding $H^0(\underline \Cbb_P)\subset H^1(T_C)$. Now generally, for any skyscraper sheaf $\Fc_P$ one has $H^0(\Fc_P)\cong i_P^*\Fc_P/\mathfrak m_P$, the stalk of $\Fc_P$ at $P$ modulo its maximal ideal.\footnote{Here $i_P : \{P\}\subset C$ is the embedding of the point $P$.} In the case $\Fc_P = \underline \Cbb_P$ we have therefore $H^0(\underline\Cbb_P) \cong \Cbb$. The image of the generator $1\in \Cbb$ under the boundary map $\dt$ in \eqref{rf83f03jf09j3} is referred to as the \emph{Schiffer variation of $C$ at $P$}. Denoting this by $(C\subset \Xc_P \ra \Cbb)$, the Kodaira-Spencer class can be described as follows:\footnote{c.f., \cite[p. 175]{ARABGRIFF}} let $\{U, V\}$ be a covering of $C$ where $U$ is a sufficiently small, open neighbourhood of $P$ and $V$ is the complement. Let $z$ be the local coordinate on $U$ centered at $P$. Then with respect to the cover $\{U, V\}$,
\begin{align}
(\kappa_{\Xc_P, 0})_{UV} = \frac{1}{z} \frac{\pt}{\pt z}
\label{rfh93fh983hf93h9}
\end{align}
We now have the following useful result.

\begin{LEM}\label{rfhu3hf93hf8h30}
As a $(0, 1)$-form, the Kodaira-Spencer class of a Schiffer variation at $P$ can be represented by the delta distribution $\pi\dt(z)d\overline z\frac{\pt}{\pt z}$ where $z$ is centered at $P$.\footnote{\label{rf784f9hf893h3434}The $\dt$ appearing in this statement has no relation to the boundary map $\dt$ in \eqref{rf83f03jf09j3}. Hopefully no confusion will arise over this blatant abuse of notation.}
\end{LEM}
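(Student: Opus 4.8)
The plan is to identify the Kodaira--Spencer class of the Schiffer variation, realized in \v{C}ech cohomology by the cocycle $\frac{1}{z}\frac{\pt}{\pt z}$ on the overlap $U\cap V$ as in \eqref{rfh93fh983hf93h9}, with a Dolbeault representative via the \v{C}ech--Dolbeault comparison isomorphism. First I would choose a smooth bump function $\rho$ supported in $U$ and identically $1$ near $P$; then the \v{C}ech $1$-cocycle $\left(\frac{1}{z}\frac{\pt}{\pt z}\right)_{UV}$ is split by the $0$-cochain which is $\rho\cdot\frac{1}{z}\frac{\pt}{\pt z}$ on $U$ (extended by $0$ to $V$) and $0$ on $V$; the difference of these local sections on $U\cap V$ is $(1-\rho)\cdot\frac{1}{z}\frac{\pt}{\pt z}$, which is holomorphic there, so up to sign this is the required splitting. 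Applying $\overline{\pt}$ to this $0$-cochain produces a global smooth $(0,1)$-form with values in $T_C$, namely $\overline{\pt}\rho\cdot\frac{1}{z}\frac{\pt}{\pt z}$ (the $\frac{1}{z}$ factor is holomorphic where $\overline{\pt}\rho$ is supported, so no distributional subtlety enters yet), and this represents the same class in $H^1(T_C)\cong A^{0,1}(T_C)/\overline{\pt}A^{0,0}(T_C)$.

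Next I would argue that $\overline{\pt}\rho\cdot\frac{1}{z}\frac{\pt}{\pt z}$ is cohomologous to $\pi\,\dt(z)\,d\overline z\,\frac{\pt}{\pt z}$. The key computation is the Cauchy--Pompeiu / Cauchy--Green formula in the form $\overline{\pt}\!\left(\frac{1}{\pi z}\right)=\dt(z)$ (as a distribution on $\Cbb$, with the normalization $\frac{1}{2\pi i}\frac{dz}{z}$ integrating to $1$ around the origin; the factor $\pi$ matches the paper's convention in \eqref{rfh93fh983hf93h9}). Writing $\overline{\pt}\rho\cdot\frac{1}{z} = \overline{\pt}\!\left(\rho\cdot\frac{1}{z}\right) - \rho\cdot\overline{\pt}\!\left(\frac{1}{z}\right)$ as distributions, and using that $\rho\equiv 1$ near $P$ so that $\rho\cdot\overline{\pt}(1/z)=\overline{\pt}(1/z)=\pi\dt(z)$, we get that $\overline{\pt}\rho\cdot\frac{1}{z}\frac{\pt}{\pt z} = \overline{\pt}\!\left(\rho\cdot\frac{1}{z}\frac{\pt}{\pt z}\right) - \pi\dt(z)\frac{\pt}{\pt z}$, i.e.\ the two $(0,1)$-currents $\overline{\pt}\rho\cdot\frac{1}{z}\frac{\pt}{\pt z}$ and $-\pi\dt(z)d\overline z\frac{\pt}{\pt z}$ differ by a $\overline{\pt}$-exact current; a sign bookkeeping against the splitting orientation in the \v{C}ech--Dolbeault isomorphism gives the stated representative $\pi\dt(z)d\overline z\frac{\pt}{\pt z}$. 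To be fully rigorous I would note that the current $\pi\dt(z)d\overline z\frac{\pt}{\pt z}$ and the smooth form $\overline{\pt}\rho\cdot\frac{1}{z}\frac{\pt}{\pt z}$ define the same Dolbeault class because $H^1(T_C)$ computed with currents agrees with $H^1(T_C)$ computed with smooth forms (the sheaf of $T_C$-valued currents is a fine resolution of $T_C$, just as smooth forms are), so replacing a smooth representative by a cohomologous distributional one is legitimate.

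The main obstacle is purely a matter of careful conventions rather than deep content: pinning down the precise constant and sign so that the distributional identity $\overline{\pt}(1/z)=\pi\dt(z)$ (as opposed to $2\pi i$ or $\pi i$ times $\dt$, depending on whether one writes $\dt(z)$ against $\frac{i}{2}dz\wedge d\overline z$ or against Lebesgue measure $dx\,dy$) is compatible simultaneously with the normalization implicit in \eqref{rfh93fh983hf93h9} and with the sign in the \v{C}ech-to-Dolbeault boundary map $\dt$ from \eqref{rf83f03jf09j3}. I would handle this by testing both representatives against a global $\overline{\pt}$-closed $(1,0)$-form with values in $K_C\otimes K_C=K_C^{\otimes 2}$ (Serre-dual to $H^1(T_C)$) and checking the two pairings agree: pairing $\pi\dt(z)d\overline z\frac{\pt}{\pt z}$ against a quadratic differential $q(z)dz^{\otimes 2}$ evaluates to $\pi\, q(P)$ up to the normalization, while pairing the \v{C}ech cocycle $\frac{1}{z}\frac{\pt}{\pt z}$ against $q$ via a residue at $P$ gives $2\pi i$ times the coefficient of $dz/z$, and matching these fixes every constant unambiguously. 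Once the constant is fixed this way, the lemma follows.
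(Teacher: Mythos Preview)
Your proposal is correct and follows essentially the same approach as the paper: both pass from the \v{C}ech representative $\frac{1}{z}\frac{\pt}{\pt z}$ to a Dolbeault representative via the \v{C}ech--Dolbeault comparison, and then invoke the distributional identity $\overline{\pt}(1/z)=\pi\dt(z)$ (which is exactly the paper's \eqref{rfh893hf89h38f30j9f03}). Your version is more careful than the paper's---you make the bump-function splitting explicit, distinguish the smooth representative $\overline{\pt}\rho\cdot\frac{1}{z}\frac{\pt}{\pt z}$ from the distributional one, and propose fixing the constant by a Serre-duality test---whereas the paper simply asserts the Dolbeault image is $\overline{\pt}(1/z)\frac{\pt}{\pt z}$ and reads off the answer; but the underlying argument is the same.
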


\begin{proof}
From the theory of principal value distributions we have the classical relation deftly employed by Donagi-Witten in \cite{DW2},
\begin{align}
\frac{\pt}{\pt \overline z}\frac{1}{z} = \pi\dt(z).
\label{rfh893hf89h38f30j9f03}
\end{align}
Now generally for a sheaf $\Fc$ on a complex space $X$ one has the Dolbeault isomorphism $H^i(X, \Om^j_X\otimes \Fc)\cong H^{j, i}_{\overline \pt}(\Fc)$. To illustrate in the case $i = 1, j = 0$, an element $f\in H^1(X, \Fc)$ can be represented by the $1$-cocycle $f_{ij} = g_j - g_i$ with respect to an open cover $(U_i)_i$. Since $f_{ij}$ is holomorphic, it will be $\overline \pt$-closed giving $\overline \pt g_i = \overline \pt g_j$. Hence there will exist a some global, $\overline\pt$-closed $h\in \Gam^{0, 1}(X, \Fc)$ such that $h|_{U_i} = \overline\pt g_i$. 
Specializing now to the case of interest in the statement of this lemma, let $(C\subset \Xc_P\ra \Cbb)$ denote the Schiffer variation at $P$. Recall the representation of its Kodaira-Spencer class $\kappa_{\Xc_P, 0}$ in \eqref{rfh93fh983hf93h9} with respect to the cover $\{U, V\}$. Under the Dolbeault isomorphism, it will be represented by $\overline\pt\left(\frac{1}{z}\right)\frac{\pt}{\pt z}$. The lemma follows from \eqref{rfh893hf89h38f30j9f03}.
\end{proof}

\noindent
For $h^1(T_C)$-many points on $C$ in general position $(P_h)_{h= 1, \ldots, h^1(T_C)}$ see that on cohomology $H^0(\oplus_h\underline \Cbb_{P_h})\cong \oplus_h H^0(\underline \Cbb_{P_h})\cong \Cbb^{h^1( T_C)}$. Since the boundary map $\dt$ in \eqref{rf83f03jf09j3} is a monomorphism whenever the genus $g>1$ it follows that $\dt$ gives an isomorphism $H^0(\oplus_h\underline \Cbb_{P_h})\cong H^1(T_C)$. Hence, Schiffer variations generate the versal deformation space of complex structures on $C$.

\subsubsection{On super Riemann surfaces}\label{rfh73fg9h89f3fh903j}
In the case of super Riemann surfaces $\Scl$, recall from \S\ref{fnckbcubcuiencioe} that there are two types of deformations or families here: even types and odd types. In this article we are only interested in the odd types, being families $(\Scl\subset \Xc\ra \Abb^{0|q}_k)$. For $\Scl$ modeled on a given spin curve $(C, K_C^{1/2})$ the odd, versal deformation space of $\Scl$ is the cohomology $H^1(C, K_C^{-1/2})$. With this given, note that we have an isomorphism $K_C^{-1/2} \cong T_C\otimes K_C^{1/2}$. At a point $P\in C$ then, tensoring the sequence \eqref{fvgf97h389fh830fh} with our spin structure gives
\begin{align}
0 \lra K_C^{-1/2} \lra K_C^{-1/2}(P) \lra \underline \Cbb_P\lra 0.
\label{hf849hf984hf903}
\end{align}
On cohomology the image of the boundary map $\dt : H^0(\underline \Cbb_P) \ra H^1(K_C^{-1/2})$ defines the  class of deformations termed \emph{odd Schiffer variations}. Arguing as in Lemma \ref{rfhu3hf93hf8h30} and using the representations in \eqref{rfh89hf983fh093j0}, \eqref{rfiuhf983hf0j390} we have:

\begin{LEM}\label{rfg748fg784hf89f30}
Fix a super Riemann surface $\Scl$ modeled on $(C, K_C^{1/2})$ with genus $g>1$. Let $(\Scl \subset \Xc_P \ra \Abb_k^{0|1})$ denote an odd Schiffer variation of $\Scl$ at a point $P\in C$. Then its Kodaira-Spencer class can be represented by the smooth, $(0,1)$-Berezinian distribution $\pi\dt(z)\q~[dzd\overline z|d\q]$ where $z$ is a local coordinate centered at $P$.\footnote{c.f., f.t. \ref{rf784f9hf893h3434}.}\qed
\end{LEM}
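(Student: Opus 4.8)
The claim is a direct transcription of the classical case (Lemma~\ref{rfhu3hf93hf8h30}) into the super Riemann surface setting, so the plan is to run the argument of Lemma~\ref{rfhu3hf93hf8h30} verbatim with the ideal sheaf sequence \eqref{fvgf97h389fh830fh} replaced by its tensor-twisted version \eqref{hf849hf984hf903}, and then translate the \v{C}ech representative of the boundary class into a Dolbeault (Berezinian-distributional) representative. First I would fix the cover $\{U, V\}$ of $C$ with $U$ a small neighbourhood of $P$ carrying a local coordinate $z$ centered at $P$, and $V = C\setminus\{P\}$, and lift this to a cover of $\Scl$ using the coordinate $(z|\q)$ over $U$. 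Exactly as in \eqref{rf83f03jf09j3}, the hypothesis $g > 1$ forces $h^0(K_C^{-1/2}) = h^0(K_C^{-1/2}(P)) = 0$ (both sheaves have negative degree for $g>1$, since $\deg K_C^{-1/2} = g-1 - 2(g-1)/\dots$ — concretely $\deg K_C^{-1/2} = -(g-1)$ and adding the point $P$ keeps it negative for $g>1$), so the boundary map $\dt: H^0(\underline\Cbb_P)\cong\Cbb \hookrightarrow H^1(K_C^{-1/2})$ is injective and the odd Schiffer variation is the image of $1\in\Cbb$. By the description of the Kodaira-Spencer class of a Schiffer variation (as in \eqref{rfh93fh983hf93h9}), the \v{C}ech $1$-cocycle representing $\kappa_{\Xc_P,0}$ with respect to $\{U,V\}$ is $\frac{1}{z}$ times the generator of $K_C^{-1/2}$, which in the Berezinian-form language of \eqref{rfh89hf983fh093j0}--\eqref{rfiuhf983hf0j390} is $\frac{1}{z}\q~[dz|d\q]$ (up to the identification of $K_C^{-1/2}$-valued objects with such Berezinian expressions carrying a single $\q$).

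Next I would apply the Dolbeault isomorphism $H^1(C, K_C^{-1/2})\cong A^{0,1}(K_C^{-1/2})/\overline\pt A^{0,0}(K_C^{-1/2})$ exactly as in the proof of Lemma~\ref{rfhu3hf93hf8h30}: a \v{C}ech cocycle $f_{UV} = g_V - g_U$ with holomorphic $g_U, g_V$ is represented, under Dolbeault, by the global $\overline\pt$-closed form $\overline\pt g_U = \overline\pt g_V$, which here is $\overline\pt\!\left(\tfrac1z\right)$ applied to the generator, i.e. $\overline\pt\!\left(\tfrac1z\right)\q~[dz|d\q]$ in the Berezinian presentation, matching the operator $\overline\pt_{Ber}$ from \eqref{rfh983hf983f09j93j}. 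Finally, invoking the principal value distribution identity \eqref{rfh893hf89h38f30j9f03}, namely $\frac{\pt}{\pt\overline z}\frac1z = \pi\dt(z)$, the form $\overline\pt\!\left(\tfrac1z\right)\q~[dz|d\q]$ becomes $\pi\dt(z)\,d\overline z\,\q~[dz|d\q] = \pi\dt(z)\q~[dzd\overline z|d\q]$, where I absorb the $d\overline z$ into the smooth $(0,1)$-Berezinian measure notation $[dzd\overline z|d\q]$ as used throughout \S\ref{fnckbcubcuiencioe}. This is exactly the asserted representative.

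\textbf{Main obstacle.} The only genuinely non-formal point is the bookkeeping of the $\q$-dependence and the Berezinian measure: one must check that the twist by $K_C^{1/2}$ in passing from \eqref{fvgf97h389fh830fh} to \eqref{hf849hf984hf903} corresponds, on the super Riemann surface side, precisely to the insertion of a single factor of $\q$ in the Berezinian-form representative (so that a class in $H^1(K_C^{-1/2})$ matches the analytic description \eqref{rfh89hf983fh093j0}--\eqref{rfiuhf983hf0j390} with its characteristic $\xi_m\chi^m\q~[\cdots]$ shape), and that the conventions for $\overline\pt_{Ber}$ and the measure $[dzd\overline z|d\q]$ are consistent with those fixed in \S\ref{fnckbcubcuiencioe}. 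Granting Theorem~\ref{rfhhf893f0j39f03} and the identifications already set up there, this is a matter of unwinding definitions rather than a new idea; the cohomological input ($g>1$ vanishing, injectivity of $\dt$) and the distributional identity are both already in hand from the curve case. Hence the proof is short: cite Lemma~\ref{rfhu3hf93hf8h30}'s method, substitute the twisted sequence \eqref{hf849hf984hf903}, and read off the representative using \eqref{rfh89hf983fh093j0}, \eqref{rfiuhf983hf0j390}, and \eqref{rfh893hf89h38f30j9f03}.
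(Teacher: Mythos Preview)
Your proposal is correct and follows exactly the route the paper indicates: the paper states this lemma with a \qed, prefacing it only with ``Arguing as in Lemma~\ref{rfhu3hf93hf8h30} and using the representations in \eqref{rfh89hf983fh093j0}, \eqref{rfiuhf983hf0j390},'' which is precisely what you have unpacked. One small slip (harmless for this lemma): $\deg K_C^{-1/2}(P) = 2-g$ is zero when $g=2$, not negative, so your parenthetical degree claim fails there---but injectivity of $\dt$ is not actually needed to produce the Dolbeault representative, only for the later Lemma~\ref{rfh89hf983hf0309}, where the paper handles the $g=2$ case separately.
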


\noindent
As for whether the odd deformation space can be generated by odd Schiffer variations, note firstly that $\deg K_C^{-1/2} = g - 1 < 0$ for genus $g> 1$. It will therefore not admit any global sections. For $K_C^{-1/2}(P)$ however, this is only true in genus $g > 2$.\footnote{See f.t. \ref{rhf4hf98h89fhf0}.} In genus $g = 2$, $\deg K_C^{-1/2} = 0$. Nevertheless, note that:
%The deformation space $H^1(K_C^{-1/2})$ can nevertheless be generated by the odd Schiffer variations however since: 
(i) $h^1(K_C^{-1/2}) = 2$ in genus $g= 2$ and (ii) $\deg K_C^{-1/2}(P+Q) < 0$ for distinct points $P, Q\in C$. From the cohomology sequence induced from \eqref{hf849hf984hf903} we arrive therefore at the analogue to the classical case:

%Arguing as above, note that for genus $g(C) > 1$, both $K_C^{-1/2}$ and $K_C^{-1/2}(P)$ are line bundles on $C$ of negative degree. As such they have no global sections, leading to an embedding of cohomology spaces $H^0(K_C^{-1/2}(P)\otimes_{\Oc_C}\underline \Cbb_P) \subset H^1(K_C^{-1/2})$. Now since $K_C^{1/2}$ is a spin structure we have $K_C\cong K_C^{1/2}\otimes K_C^{1/2}$. Hence that $K_C^{-1}\cong  K_C^{-1/2}\otimes K_C^{-1/2}$. This gives an identification on local sections $\pt/\pt z = \pt/\pt \q\otimes \pt/\pt\q$, where $(z|\q)$ are local coordinates on the super Riemann surface $\Scl$. Consequently we find here, similarly to earlier, that $H^0(K_C^{-1/2}(P)\otimes_{\Oc_C}\underline \Cbb_P) \cong \Cbb$ is generated by the global section which, in the coordinates $(z|\q)$, is given by $\frac{1}{z}\frac{\pt}{\pt \q}$. Its image under the boundary map $\dt : H^0(K_C^{-1/2}(P)\otimes_{\Oc_C}\underline \Cbb_P)\ra H^1(K_C^{-1/2})$ is the \emph{odd} Schiffer variation of a super Riemann surface $\Scl$ at the point $P\in C$. Arguing as in Lemma \ref{rfhu3hf93hf8h30} will lead to:
%
%
%
%
%\noindent
%Regarding the odd, versal deformation space we have:

\begin{LEM}\label{rfh89hf983hf0309}
Let $\Scl$ be a super Riemann surface modeled on $(C, K_C^{1/2})$ with genus $g> 1$. It's odd, versal deformation space is generated by odd Schiffer variations at $h^1(C, K_C^{1/2})$-many points on $C$ in general position. \qed
\end{LEM}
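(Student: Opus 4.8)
The plan is to run the super-analogue of the argument for curves recalled just above, now applied to the sequence \eqref{hf849hf984hf903}. Set $N := h^1(C, K_C^{-1/2})$; since $\deg K_C^{-1/2} = -(g-1) < 0$ for $g > 1$ we have $H^0(C, K_C^{-1/2}) = 0$, so Riemann-Roch gives $N = 2g - 2$ (in particular $N = 2$ when $g = 2$). Fix points $P_1, \dots, P_N$ on $C$ in general position, put $D = P_1 + \cdots + P_N$, and twist the ideal-sheaf sequence of $D\subset C$ by $K_C^{-1/2}$:
\[
0 \lra K_C^{-1/2} \lra K_C^{-1/2}(D) \lra \bigoplus_{h=1}^{N} \underline\Cbb_{P_h} \lra 0 .
\]
For each $h$ the single-point sequence \eqref{hf849hf984hf903} at $P_h$ includes into this one, via $K_C^{-1/2}(P_h) \hookrightarrow K_C^{-1/2}(D)$ and the $h$-th inclusion of skyscraper sheaves. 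Hence, by naturality of connecting homomorphisms, the connecting map $\dt \colon \bigoplus_h \Cbb \ra H^1(C, K_C^{-1/2})$ sends the $h$-th coordinate generator to the Kodaira-Spencer class of the odd Schiffer variation of $\Scl$ at $P_h$ --- in the Berezinian-distribution form of Lemma \ref{rfg748fg784hf89f30}. It therefore suffices to show that $\dt$ is an isomorphism.

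To see this, I would read off the relevant piece of the long exact cohomology sequence,
\[
0 \lra H^0(K_C^{-1/2}(D)) \lra \bigoplus_{h=1}^{N} \Cbb \stackrel{\dt}{\lra} H^1(K_C^{-1/2}) \lra H^1(K_C^{-1/2}(D)) \lra 0 ,
\]
the $0$ on the left coming from $H^0(K_C^{-1/2}) = 0$. Since $\dim \bigoplus_h \Cbb = N = \dim H^1(C, K_C^{-1/2})$, the map $\dt$ is an isomorphism exactly when it is surjective, i.e.\ exactly when $H^1(C, K_C^{-1/2}(D)) = 0$. By Serre duality this group is dual to $H^0(C, K_C^{3/2}(-D))$, and Riemann-Roch (using $h^1(K_C^{3/2}) = h^0(K_C^{-1/2}) = 0$) gives $h^0(K_C^{3/2}) = 2g - 2 = N$. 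I would then take ``general position'' of $P_1, \dots, P_N$ to mean precisely that these $N$ vanishing conditions are independent on the $N$-dimensional space $H^0(K_C^{3/2})$, so that $H^0(K_C^{3/2}(-D)) = 0$ and therefore $H^1(K_C^{-1/2}(D)) = 0$. This forces $\dt$ to be an isomorphism, so the $N$ odd Schiffer variations at $P_1, \dots, P_N$ form a basis of the odd versal deformation space $H^1(C, K_C^{-1/2})$ --- which is the assertion of the lemma.

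The step requiring genuine care is the independence of the point conditions above, i.e.\ that imposing a general point really does drop $h^0(K_C^{3/2})$ by one. This is automatic once $\deg K_C^{3/2} = 3(g-1) \geq 2g$, hence for $g \geq 3$; in genus $g = 2$, where $\deg K_C^{3/2} = 3 = 2g-1$, the linear system $|K_C^{3/2}|$ may fail to be very ample, so the cautious route is to observe that its base locus is a proper --- hence finite --- closed subset of $C$ and then remove one general point at a time, lowering $h^0$ by one at each step. Everything else --- the dimension count, the identification of the connecting-map images with odd Schiffer variations through Lemma \ref{rfg748fg784hf89f30}, and exactness --- is the word-for-word super-translation of the curve computation preceding the statement and presents no difficulty.
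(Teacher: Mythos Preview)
Your proof is correct and follows essentially the same route as the paper's sketch preceding the lemma: pass to the long exact cohomology sequence of $0 \to K_C^{-1/2} \to K_C^{-1/2}(D) \to \bigoplus_h \underline{\Cbb}_{P_h} \to 0$ and use the dimension count $\dim\bigoplus_h\Cbb = h^1(K_C^{-1/2})$ to conclude that $\delta$ is an isomorphism for points in general position. Your Serre-dual verification of $H^1(K_C^{-1/2}(D))=0$ via $H^0(K_C^{3/2}(-D))=0$ is in fact more careful than the paper's own treatment, whose $g=2$ claim ``$\deg K_C^{-1/2}(P+Q)<0$'' is a slip (the degree is $+1$); your base-locus argument handles all $g>1$ uniformly and correctly.
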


\subsection{Associated points and split gauge}\label{rfg78gf874gf7h93}
Let $\Fc$ be an invertible sheaf on a compact, Riemann surface. The total number zeros and poles of a generic section of $\Fc$, counted with multiplicity, adds up to the degree of $\Fc$. For a spin curve $(C, K_C^{1/2})$, the degree of $K_C^{1/2}$ is $g-1$, where $g$ is the genus of $C$. Let $\mathbb S$ be a Szeg\"o kernel differential on $(C, K_C^{1/2})$. To any point $P\in C$, see that $\mathbb S(P, y)$ will be a meromorphic section of $K_C^{1/2}$ with a simple pole at $y = P$.\footnote{\label{rfg674rgf6gf}In more invariant terms, recall from \S\ref{rhf784f89hf09j39f03} that $\mathbb S$ is an element of $H^0(C\times C, \Oc_{C\times C}(1, 1, 1))$. With $i_P : \{P\}\times C\subset C\times C$ the closed embedding of the point $P$, see that $i_P^*\Oc_{C\times C}(1, 1, 1) \cong K_C^{1/2}(P)$. Hence $i_P^*\mathbb S$ will be a meromorphic section of $K_C^{1/2}$. In a local coordinate $y$, $(i_P^*\mathbb S)(y) = \mathbb S(P, y)$.}
It follows that $\mathbb S(P, y)$ will have $g$-many zeroes on $C$, counted with multiplicity. Hence on the product $C\times C$ there will exist, for any $P\in C$ fixed, a collection of points $\{P_\nu\}_{\nu =1, \ldots, g}$ such that $\mathbb S(P, P_\nu) = 0$. Hawley and Schiffer in a classical article \cite{HAWLEYSCHIFF} give an explicit construction of Szeg\"o kernel differentials on spin curves and its locus of zeroes (see \cite[\S III]{HAWLEYSCHIFF}). Pairs $(P, Q)\in C\times C$ for which $\mathbb S(P, Q) = 0$ are referred to as \emph{associated points} with respect to $\mathbb S$. For our purposes we will only need to know the following.
%To a given point $P\in C$ we refer to the set of all $Q\in C$ such that $\mathbb S(P, Q) = 0$ as the \emph{associated point set of $P$ w.r.t. $\mathbb S$}.

\begin{LEM}\label{rfh78hf893hf93j0f93j}
Let $(C, K_C^{1/2})$ be a spin curve where $C$ has genus $g> 0$ and let $\mathbb S$ be a Szeg\"o kernel differential on $(C, K_C^{1/2})$. Then there will always exist at least two distinct points $P, Q\in C$ such that $\mathbb S(P, Q) = 0$.
\end{LEM}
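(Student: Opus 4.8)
The plan is to use the preceding discussion: for a fixed point $P \in C$, the section $\mathbb{S}(P, y) = i_P^*\mathbb{S}$ is a meromorphic section of $K_C^{1/2}$ with a single simple pole at $y = P$, as explained in footnote \ref{rfg674rgf6gf}. Since $\deg K_C^{1/2}(P) = g$, this section has exactly $g$ zeroes on $C$ counted with multiplicity (it is not identically zero because $\mathbb{S}$ has a genuine pole along the diagonal). So there is at least one point $Q = P_1$ with $\mathbb{S}(P, Q) = 0$. The only thing left to rule out is the degenerate situation where, for \emph{every} choice of $P$, all the zeroes of $\mathbb{S}(P, \cdot)$ are concentrated at a single point.

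First I would fix $P$ and let $Q$ be a zero of $\mathbb{S}(P, \cdot)$, so $\mathbb{S}(P, Q) = 0$. If $Q \neq P$ we are done, since then $P$ and $Q$ are two distinct points with $\mathbb{S}(P, Q) = 0$. So the only case to handle is $g = 1$ with the single zero sitting exactly at $y = P$ (for $g > 1$ there are at least two zeroes counted with multiplicity, and even if they coincide we may instead vary $P$, as below). The cleanest uniform argument is to use the involution invariance of $\mathbb{S}$ from Corollary \ref{rfg87gf87h3f98h397}: $\iota^*\mathbb{S} = \mathbb{S}$, so $\mathbb{S}(P, Q) = 0$ forces $\mathbb{S}(Q, P) = 0$ as well. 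Hence the relation ``$\mathbb{S}(P, Q) = 0$'' is symmetric in $P$ and $Q$. Now suppose, for contradiction, that the \emph{only} pairs with $\mathbb{S} = 0$ lie on the diagonal, i.e. $\mathbb{S}(P, Q) = 0 \Rightarrow P = Q$. Then for each fixed $P$, the unique zero of the meromorphic section $\mathbb{S}(P, \cdot)$ of $K_C^{1/2}(P)$ is forced to be $P$ itself, with multiplicity $g$; that is, the divisor of $\mathbb{S}(P, \cdot)$ as a section of $K_C^{1/2}(P)$ equals $g\cdot P - P = (g-1)P$, consistent with $\deg K_C^{1/2} = g-1$.

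For $g = 1$ this says the section $\mathbb{S}(P, \cdot)$ of $K_C^{1/2}(P)$ has divisor $0$, so $\mathbb{S}(P, \cdot)$ is a nowhere-vanishing section; but then $\mathbb{S}(P, y) \ne 0$ for all $y$, and there is genuinely no pair of zeroes — so I must check whether the lemma as stated is even true for $g = 1$, or whether it implicitly assumes $g > 1$. Re-reading: the hypothesis is $g > 0$, so $g = 1$ is included, and in that case $K_C^{1/2}$ is either the trivial theta-characteristic (the even one) or a nontrivial $2$-torsion one. For the \emph{even} spin structure on an elliptic curve, $K_C^{1/2} \cong \Oc_C$, and $\mathbb{S}$ is (essentially) the Weierstrass-type kernel which does vanish at a point distinct from $P$ — this is exactly the ``associated points'' phenomenon. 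The honest proof for $g = 1$ is: $\mathbb{S}(P, \cdot)$ is a section of $K_C^{1/2}(P) \cong \Oc_C(P)$ of degree $1$ with a pole at $P$; its unique zero $Q$ satisfies $Q \sim P$ in the sense that $\Oc_C(Q - P) \cong K_C^{1/2}$, i.e. $\Oc_C(P - Q) \cong K_C^{-1/2}$. If $Q = P$ this would force $K_C^{1/2} \cong \Oc_C$ \emph{and} the section to be the constant; but the constant section of $\Oc_C$ pulled through $\mathrm{res}$ is nonzero, contradicting $\mathbb{S}(P, P)$ being the \emph{residue} at the pole, not a value. More carefully: $\mathbb{S}(P, \cdot)$ genuinely has a pole at $P$, hence it is not the constant, hence its zero $Q$ cannot be at $P$. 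That is the crux. So: the zero $Q$ of $\mathbb{S}(P, \cdot)$ is automatically distinct from $P$ because $\mathbb{S}(P, \cdot)$ has a nontrivial principal part at $P$ and therefore is not a nowhere-vanishing section of a degree-$\geq 1$ bundle in a way that puts its zero on top of its pole.

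The main obstacle — and the step I would spend the most care on — is precisely this: ruling out that the unique (or all) zeroes of $\mathbb{S}(P, \cdot)$ pile up at $y = P$. I would argue it by the following local computation at the pole. Near the diagonal, by \eqref{rf9hf983h0390fj3} with $(a,b,c) = (1,1,1)$, we may write $\mathbb{S}(x, y) = \frac{f(x,y)}{x - y}\, dx^{1/2} dy^{1/2}$ with $f$ holomorphic and $f(P, P) \neq 0$ (this nonvanishing of $f$ on the diagonal is exactly the statement that $\mathrm{res}\,\mathbb{S} \neq 0$, i.e. $\mathbb{S} = \mathrm{res}^{-1}(1)$ under Lemma \ref{rf8938f38jf03jf0}). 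Hence in a punctured neighbourhood of $y = P$ the section $\mathbb{S}(P, y)$ is nonzero: $|\mathbb{S}(P, y)| \to \infty$ as $y \to P$. Therefore the $g$ zeroes of $\mathbb{S}(P, \cdot)$ all lie \emph{off} a small neighbourhood of $P$, in particular each is a point $Q \neq P$. Taking any one of them yields the desired pair of distinct points $(P, Q)$ with $\mathbb{S}(P, Q) = 0$ (and, by involution invariance, $\mathbb{S}(Q, P) = 0$ too, which is reassuring but not needed). This completes the proof, and the only genuinely delicate point is the nonvanishing $f(P,P) \neq 0$, which I would justify by unwinding the normalization $\mathbb{S} = \mathrm{res}^{-1}(1)$ from Lemma \ref{rf8938f38jf03jf0}: if $f$ vanished on the diagonal, $\mathbb{S}$ would extend to a section of $\Oc_{C \times C}(1,1,0)$, which is zero for a generic even spin structure, contradicting $\mathrm{res}\,\mathbb{S} = 1 \neq 0$.
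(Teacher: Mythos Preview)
Your proposal is correct and follows essentially the same argument as the paper: fix $P$, observe that $\mathbb S(P,\cdot)$ is a section of $K_C^{1/2}(P)$ of degree $g>0$ and hence has a zero, and that this zero cannot sit at $P$ because $y=P$ is a genuine pole. The paper's proof is three lines and omits your detours through involution invariance and the separate genus-one case; your added justification that $f(P,P)\neq 0$ (so the pole is honest) is a fair point the paper leaves implicit, though your argument for it invokes the generic-even hypothesis via Lemma~\ref{rf8938f38jf03jf0}, which the lemma as stated does not assume.
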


\begin{proof}
Let $P\in C$ be fixed. With the genus $g> 0$, the degree of $K_C^{1/2}(P)$ will be non-zero and positive. Hence the associated point set $\{P_\nu\}_\nu$ will be non-empty. Since $y = P$ is a simple pole for the kernel $\mathbb S(P, y)$, it obviously cannot be a zero. And since $\mathbb S(P, P_{\nu_0}) = 0$ for any $P_{\nu_0}\in \{P_\nu\}_\nu$ by definition, $y = P_{\nu_0}$ is a zero for $\mathbb S(P, y)$. Therefore $P \neq P_{\nu_0}$ for any $P_{\nu_0} \in \{P_\nu\}_\nu$. 
\end{proof}

\section{Supermoduli splitting}
\label{rfgt6gf48fh584}
\subsection{The obstruction class}

\subsubsection{Generalities} 
A complex supermanifold $\Xfr$ is split if it is isomorphic to its split model. For $\Xfr$ modeled on $(X, T_{C, -}^*)$ its first or `primary' obstruction to splitting is a class $\om_\Xfr$ in the sheaf cohomology $H^1(X, \wedge^2T^*_{X, -}\otimes T_X)$. We refer the reader to \cite{BETTHIGHOBS, BETTOBSTHICK} and references therein for further details on supermanifold obstruction theory. With the identification $H^1(X, \wedge^2T^*_{X, -}\otimes T_X) \cong \mathrm{Ext}^1(\wedge^2T_{X, -}, T_X)$ see that $\om_\Xfr$ corresponds, up to equivalence, to a central extension of sheaves
\begin{align}
0 \lra T_X \lra \Ec_{\om_\Xfr} \lra \wedge^2T_{X, -}\lra 0.
\label{rhf893hf83f0j93}
\end{align}
Or dually, to:
\begin{align}
0 \lra \wedge^2T^*_{X, -} \lra \Ec^*_{\om_\Xfr} \lra T^*_X\lra 0.
\label{rhf893hf89f093}
\end{align}
In particular, $\om_\Xfr = 0 \iff$ the central extension \eqref{rhf893hf83f0j93}, equivalently \eqref{rhf893hf89f093}, splits. While the vanishing of $\om_\Xfr$ is generally insufficient to conclude the existence of a splitting, in the case $\dim_-\Xfr = 2$ it is both necessary and sufficient, as shown for instance in \cite[p. 191]{YMAN}. That is:

\begin{LEM}\label{rfu3f983hfj390f444}
Let $\Xfr$ be a $(p|2)$-dimensional, complex supermanifold. Then $\Xfr$ is split if and only if $\om_\Xfr = 0$.
\qed
\end{LEM}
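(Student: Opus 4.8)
The plan is to show that for a $(p|2)$-dimensional complex supermanifold, the first obstruction $\om_\Xfr$ is the \emph{only} obstruction, so its vanishing forces $\Xfr$ to be split. The overall strategy is to analyze the filtration of the structure sheaf $\Oc_\Xfr$ by powers of the nilpotent ideal $\Jc$ generated by the odd coordinates. Because $\dim_-\Xfr = 2$, we have $\Jc^3 = 0$, so the filtration is very short: $\Oc_\Xfr \supset \Jc \supset \Jc^2 \supset \Jc^3 = 0$, with $\Oc_\Xfr/\Jc \cong \Oc_X$, $\Jc/\Jc^2 \cong T^*_{X,-}$ and $\Jc^2 \cong \wedge^2 T^*_{X,-}$ (the last isomorphism using rank-$2$). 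A splitting of $\Xfr$ amounts to an algebra retraction $\Oc_X \hookrightarrow \Oc_\Xfr$ together with a compatible choice of odd generators, equivalently an isomorphism of sheaves of algebras $\Oc_\Xfr \cong \Oc_X \oplus T^*_{X,-} \oplus \wedge^2 T^*_{X,-}$ respecting the grading.

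The key steps, in order, are as follows. First I would recall (from the cited obstruction theory, e.g. \cite{BETTHIGHOBS, BETTOBSTHICK, YMAN}) that the obstructions to splitting $\Xfr$ are organized into a sequence of classes $\om^{(k)}_\Xfr \in H^1(X, \wedge^k T^*_{X,-}\otimes T_X)$ for $k \geq 2$, where $\om^{(k)}$ is defined only once $\om^{(2)},\dots,\om^{(k-1)}$ vanish, and $\Xfr$ is split if and only if all of them vanish; here $\om^{(2)}_\Xfr = \om_\Xfr$ is the primary obstruction. Second, I would observe that since $\wedge^k T^*_{X,-} = 0$ for all $k \geq 3$ when $\dim_- \Xfr = 2$ (the exterior power of a rank-$2$ bundle vanishes past degree $2$), every higher obstruction group $H^1(X, \wedge^k T^*_{X,-}\otimes T_X)$ is the zero group for $k \geq 3$. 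Hence the only possibly-nonzero obstruction is $\om_\Xfr = \om^{(2)}_\Xfr$. Third, combining these: if $\om_\Xfr = 0$ then the entire obstruction sequence vanishes, so $\Xfr$ is split; conversely, if $\Xfr$ is split then all obstructions including $\om_\Xfr$ vanish (or, more concretely, the extension \eqref{rhf893hf83f0j93} is the one associated to the split model and splits). This gives the biconditional.

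Alternatively — and this is the more self-contained route I would actually write out — one can argue directly via cocycles without invoking the full machinery. Choose an open cover $(U_i)$ trivializing $\Xfr$ with odd coordinates $(\q^1_i,\q^2_i)$; the transition data consist of even automorphisms whose reduction mod $\Jc^2$ gives the classical gluing of $(X, T^*_{X,-})$. The obstruction to upgrading a mod-$\Jc^2$ splitting to a mod-$\Jc^3$ splitting is a \v{C}ech $1$-cocycle valued in $\wedge^2 T^*_{X,-}\otimes T_X$ whose class is exactly $\om_\Xfr$; since $\Jc^3 = 0$, a mod-$\Jc^3$ splitting is an honest splitting, so $\om_\Xfr = 0$ lets us adjust the $\q_i$ to glue splittings globally. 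I expect the main obstacle to be bookkeeping: carefully matching the abstract class $\om_\Xfr \in \mathrm{Ext}^1(\wedge^2 T_{X,-}, T_X)$ of \eqref{rhf893hf83f0j93} with the concrete \v{C}ech obstruction to extending the splitting across the top layer of the filtration, and verifying that no further compatibility (associativity, cocycle) conditions survive once $\Jc^3$ vanishes. But since this is precisely the content of \cite[p.~191]{YMAN}, the cleanest presentation is to reduce to that reference after recording the vanishing $\wedge^{\geq 3}T^*_{X,-} = 0$ that makes the $(p|2)$ case special.
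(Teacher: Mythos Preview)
Your proposal is correct and aligns with the paper's treatment: the paper gives no argument beyond citing \cite[p.~191]{YMAN}, and your proof sketch supplies exactly the content behind that citation---namely that $\wedge^{k}T^*_{X,-}=0$ for $k\geq 3$ when $\dim_-\Xfr=2$, so the primary obstruction is the only one. Your alternative cocycle argument is also sound and amounts to the same observation that $\Jc^3=0$ truncates the filtration.
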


\subsubsection{On supermoduli space}
We specialize now to the case where $\Xfr = \Mfr_g$ is the genus $g$, SRS moduli space. This is a superspace which is modeled on the genus $g$, spin moduli space $\Scl\Mcl_g$ and a stacky $\Zbb_2$-bundle $\mathbb E^*_g\ra \Scl\Mcl_g$. The fiber of $\mathbb E_g$ at a spin curve $(C, K_C^{1/2})$ is the odd, versal deformation space of the super Riemann surface $\Scl$ modeled on $(C, K_C^{1/2})$. Recalling this from \S\ref{fnckbcubcuiencioe} it is,
\begin{align}
\mathbb E_g\big|_{(C, K_C^{1/2})} = H^1(C, K_C^{-1/2}).
\label{fnckbvbeiuvbek}
\end{align}
Let $\Ec^*_g$ denote the sheaf of holomorphic sections of $\mathbb E^*_g$. The primary obstruction to splitting $\Mfr_g$ is a class in $H^1(\Scl\Mcl_g, \wedge^2\Ec_g^*\otimes T_{\Mcl_g})$.\footnote{\label{rgf784gf83hf893}Note, since $\Scl\Mcl_g \ra \Mcl_g$ is a finite covering of the moduli space of genus $g$ curves $\Mcl_g$, we have identified $T_{\Scl\Mcl_g}$ with $T_{\Mcl_g}$.} Hence from \eqref{rhf893hf89f093} it will correspond to a central extension $\Ec_{\om_{\Mfr_g}}$. We will work with the dual sequence,
\begin{align}
0
\lra
\wedge^2 \Ec_g^*
\lra 
\Ec_{\om_{\Mfr_g}}^*
\lra
T^*_{\Mcl_g}
\lra
0.
\label{rfh74fg98hf039j}
\end{align}
%While $\mathbb E^*_g$ need not define a vector bundle over $\mathcal S\Mcl_g$, its tensor square, and hence antisymmetric square will define a vector bundle. This is because the automorphism $\mathbb E^*_g\ra \mathbb E^*_g$ defined by the non-trivial generator in $\Zbb_2$ will act trivially on the tensor square and hence also on the antisymmetric square. As a result the sequence in \eqref{rfh74fg98hf039j} is of vector bundles on $\Scl\Mcl_g$.
Donagi and Witten in \cite[Proposition 3.1]{DW2} prove: 

\begin{PROP}\label{rfg78gf78hf893}
The restriction of \eqref{rfh74fg98hf039j} to a spin curve $(C, K_C^{1/2})$ is naturally isomorphic to the even sequence in Lemma \ref{rfh89hf8hf03j093}, i.e., that,
\[
\xymatrix{
0 \ar[r] & \wedge^2 \Ec_g^*\big|_{(C, K_C^{1/2})} \ar[rr]\ar[d]^\cong & & \Ec_{\om_{\Mfr_g}}^*\big|_{(C, K_C^{1/2})} \ar[d]^\cong  \ar[rr] & & T^*_{\Mcl_g}\big|_{(C, K_C^{1/2})} \ar[d]^\cong \ar[r]  & 0
\\
0 \ar[r]  & \wedge^2H^0(K_C^{3/2}) \ar[rr] & & H^0(\Oc_{C\times C}(3, 3, 1))^+ \ar[rr] & & H^0(K_C^2)\ar[r]  & 0
}
\]
\qed
\end{PROP}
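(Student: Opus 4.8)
The plan is to identify each of the three terms in the dual sequence \eqref{rfh74fg98hf039j}, restricted to a fixed spin curve $(C, K_C^{1/2})$, with the corresponding term in the even sequence of Lemma \ref{rfh89hf8hf03j093}, and then check that the identifications are compatible with the arrows. First I would handle the quotient term $T^*_{\Mcl_g}|_{(C, K_C^{1/2})}$. By deformation theory of curves the tangent space to $\Mcl_g$ at $C$ is $H^1(C, T_C)$, so by Serre duality the cotangent space is $H^0(C, K_C^{\otimes 2})$; under the identification of f.t. \ref{rgf784gf83hf893} this is also $T^*_{\Scl\Mcl_g}$ at the spin curve. This matches the right-hand term $H^0(K_C^2)$ in Lemma \ref{rfh89hf8hf03j093} (specialized to $a = 3$, since $(a + b - 2)/2 = 2$).

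Next I would handle the sub-object $\wedge^2\Ec_g^*|_{(C, K_C^{1/2})}$. By \eqref{fnckbvbeiuvbek} the fiber of $\mathbb E_g$ is $H^1(C, K_C^{-1/2})$, so the fiber of $\mathbb E_g^*$, by Serre duality, is $H^1(C, K_C^{-1/2})^* \cong H^0(C, K_C \otimes K_C^{1/2}) = H^0(C, K_C^{3/2})$. Taking the second exterior power gives $\wedge^2 H^0(K_C^{3/2})$, which is the left-hand term in Lemma \ref{rfh89hf8hf03j093}. The middle term is then forced: $\Ec^*_{\om_{\Mfr_g}}|_{(C, K_C^{1/2})}$ is a central extension of $H^0(K_C^2)$ by $\wedge^2 H^0(K_C^{3/2})$, and so is $H^0(\Oc_{C\times C}(3, 3, 1))^+$ by the even sequence; the content of the proposition is that the two extension classes agree, i.e. the middle vertical arrow exists making both squares commute. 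This is where I would invoke Donagi and Witten's computation in \cite[\S3]{DW2} directly: the extension class of \eqref{rfh74fg98hf039j} restricted to a point is, essentially by definition of the primary obstruction $\om_{\Mfr_g}$ and the description of $\Mfr_g$ as modeled on $(\Scl\Mcl_g, \mathbb E^*_g)$, computed from the Kodaira-Spencer/gauge data of SRS families; matching this with the class of the $\Oc_{C\times C}(3,3,1)^+$-extension is precisely the content of their gauge pairing \eqref{rciuehcuiecioe} being a perfect pairing between the deformation parameters and the central extension, together with Proposition \ref{rf873f893hf3j0}.

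The main obstacle I expect is not the identification of the end terms — those are routine Serre duality — but verifying that the extension classes match \emph{naturally} in $(C, K_C^{1/2})$, i.e. that the isomorphism is compatible with how both sides vary over $\Scl\Mcl_g$, so that it can later be used to globalize (as needed for the proof sketch of the Main Theorem). Concretely, one must track how the gauge pairing \eqref{rciuehcuiecioe}, which pairs $H^0(\Oc_{C\times C}(3,3,1))$ with triplets $(\chi^m, \chi^n, \chi^{mn})$, is dual to the central extension structure: the subspace $H^0(\Oc_{C\times C}(3,3,1))^+$ pairs nondegenerately with the gauge-equivalence classes of deformation triplets, the $\wedge^2 H^0(K_C^{3/2})$ piece pairs with the "quadratic" parameters $\chi^{mn}$ modulo the $\chi^m u^n - \chi^n u^m$ correction (reflecting the central, non-linear term in \eqref{rg73gf983hf0j30}), and the residue quotient $H^0(K_C^2)$ pairs with the symmetrized product of the $\chi^m$. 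Assembling this into the commuting diagram is exactly Proposition 3.1 of \cite{DW2}, so for the purposes of this article I would present the proof as: compute the two end terms by Serre duality, observe both sequences are central extensions with the same sub and quotient, and cite \cite[Proposition 3.1]{DW2} for the naturality of the middle identification, perhaps indicating that the identification on the middle term sends a deformation-theoretic cocycle to its associated section of $\Oc_{C\times C}(3,3,1)^+$ via the gauge pairing.
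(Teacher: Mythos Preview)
Your proposal is correct and in fact goes beyond what the paper does: the paper gives no proof at all for this proposition, simply attributing it to Donagi and Witten (the statement ends with a bare \verb|\qed| and the preceding sentence cites \cite[Proposition 3.1]{DW2}). Your plan to identify the end terms via Serre duality and then cite \cite{DW2} for the matching of the extension classes is entirely consistent with the paper's approach, only more explicit.
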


\noindent 
In Lemma \ref{rfh89hf8hf03j093} the involution on $C\times C$ served to split the sequence in $\eqref{rfh793hf983hf083}|_{(a, b, c) = (a, a, 1)}$ into a trivial, odd part and a non-trivial, even part. The relevance of this sequence to sheaves on supermoduli space is illustrated in Proposition \ref{rfg78gf78hf893}. More generally, a direct analogy on supermoduli space is described presently following Donagi-Witten \cite[\S3]{DW2}. Let $\Scl\Mcl_{g, 1} \stackrel{p}{\ra} \Scl\Mcl_g$ be the universal curve forgetting a point and $K_p$ the relative cotangent sheaf of $p$. It is a sheaf on $\Scl\Mcl_{g, 1}$ with direct image at a spin curve $(C, K_C^{1/2})\in \Scl\Mcl_g$ given by $(p_*K_p)_{(C, K_C^{1/2})} = H^0(C, K_C)$. Since we are over the spin moduli space it is possible to form the half-integer powers $K_p^{a/2}$. For odd $a$, $K_p^{a/2}$ will be the sheaf of holomorphic sections of a stack-theoretic vector bundle over $\Scl\Mcl_{g, 1}$ with direct image $(p_*K_p^{a/2})_{(C, K_C^{1/2})} = H^0(C, K_C^{a/2})$.

\begin{REM}\label{rfh893hf39fj3f30}
\emph{Comparing with \eqref{fnckbvbeiuvbek}, note that $\Ec^*_g \cong p_*K_p^{3/2}$.}
\end{REM}

\noindent 
With the morphism $p$ we can form the Cartesian pullback to get the diagram
\begin{align}
\xymatrix{
\ar[d]_{p_1} \Scl\Mcl_{g, 1}\times_{\Scl\Mcl_g}\Scl\Mcl_{g, 1} \ar[rr]^{p_2} & & \Scl\Mcl_{g, 1}\ar[d]_p
\\
\Scl\Mcl_{g, 1}\ar[rr]^p & & \Scl\Mcl_g
}
\label{rfh3hf98hf8030}
\end{align}
With $\Ec^*_g$ the sheaf of holomorphic sections of the bundle $\mathbb E^*_g \ra \mathcal S\mathcal M_g$ characterized in \eqref{fnckbvbeiuvbek}, it can be pulled back to the product $\Scl\Mcl_{g, 1}\times_{\Scl\Mcl_g}\Scl\Mcl_{g, 1}$ through the projections $(p_i)_{i = 1, 2}$ in \eqref{rfh3hf98hf8030}. As in \eqref{rf7fh398fh3f03} define,
\begin{align}
\widetilde\Oc (a, b, c) \stackrel{\Delta}{=} p_1^*K_p^{a/2} \otimes p_2^* K_p^{b/2} (c\Delta)
\label{rfg8gf93h8390}
\end{align}
where $\Delta : \Scl\Mcl_{g, 1} \subset \Scl\Mcl_{g, 1}\times_{\Scl\Mcl_g}\Scl\Mcl_{g, 1}$ is the diagonal embedding. The globalization of $\eqref{rfh793hf983hf083}|_{(a, b, c) = (3, 3, 1)}$ to the spin moduli space is now an exact sequence of sheaves,
\begin{align}
0
\lra
\widetilde \Oc(3, 3, 0)
\lra 
\widetilde \Oc(3, 3, 1)
\lra 
\Delta_*K_p^2
\lra
0.
\label{rhf93hf93hf83h0}
\end{align}
On $\Scl\Mcl_{g, 1}\times \Scl\Mcl_{g, 1}$ the involution swapping pointed, spin curve moduli gives an involution on the subspace $\Scl\Mcl_{g, 1}\times_{\Scl\Mcl_g}\Scl\Mcl_{g, 1}$. Just as in Lemma \ref{rfh89hf8hf03j093}, the involution will split \eqref{rhf93hf93hf83h0} into a trivial, odd piece and the following, non-trivial, even piece
\begin{align}
0
\lra
\widetilde \Oc(3, 3, 0)^+
\lra 
\widetilde \Oc(3, 3, 1)^+
\lra 
\Delta_*K_p^2
\lra
0.
\label{rfj039jf093jf90jf3j}
\end{align}
Now from \eqref{rfh3hf98hf8030} note that $pp_1 = pp_2$ by definition of the cartesian pullback. Let $pp = pp_1 = pp_2$ denote the projection onto $\Scl\Mcl_g$. Note that\footnote{c.f., f.t., \ref{rgf784gf83hf893}.}  $pp_*\Delta_*K_p^2\cong T_{\Mcl_g}^*$ and, with Remark \ref{rfh893hf39fj3f30}, that $pp_*\widetilde\Oc(3, 3, 0)^+\cong \wedge^2\Ec^*_g$. These isomorphisms and the fiber-wise isomorphisms in Proposition \ref{rfg78gf78hf893} give rise to a morphism $pp_*\widetilde\Oc(3, 3, 1) \ra \Ec^*_{\om_{\Mfr_g}}$. By the short five lemma,  $pp_*\widetilde\Oc(3, 3, 1) \ra \Ec^*_{\om_{\Mfr_g}}$ will be an isomorphism. Hence we have arrived at the following result (see \cite[Theorem 3.2]{DW2}):

\begin{THM}\label{rhf983hf8030fj30}
The extension class of $pp_*\eqref{rfj039jf093jf90jf3j}$ coincides, up to sign, with the primary obstruction to splitting the unpointed, genus $g$ supermoduli space.\qed
\end{THM}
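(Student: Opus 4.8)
The plan is to assemble Theorem \ref{rhf983hf8030fj30} from the pieces already in place, reducing it to a bookkeeping argument. First I would recall the structural description of $\Mfr_g$: it is modeled on the spin moduli space $\Scl\Mcl_g$ together with the $\Zbb_2$-bundle $\mathbb E_g^* \to \Scl\Mcl_g$ whose fiber is $H^1(C, K_C^{-1/2})$, and its primary obstruction is the extension class of the dual sequence \eqref{rfh74fg98hf039j}. So it suffices to identify $pp_*$ applied to the even sequence \eqref{rfj039jf093jf90jf3j} with the sequence \eqref{rfh74fg98hf039j}, compatibly with their extension classes — then the two classes agree up to the sign coming from the dualization in \eqref{rhf893hf89f093}.

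The key steps, in order: (1) apply the left-exact functor $pp_*$ to \eqref{rfj039jf093jf90jf3j} and verify the result stays short exact. Exactness on the left and middle is formal; right-exactness (surjectivity of $pp_*\widetilde\Oc(3,3,1)^+ \to pp_*\Delta_*K_p^2$) is the content to check — I would argue it fiberwise, since $\Scl\Mcl_g$-locally the relevant higher direct image vanishes by Lemma \ref{fh98hf89hf030f3} (this is exactly the vanishing that made \eqref{rfh793hf983hf083} exact on a single spin curve), and base change then lets the global surjectivity be read off from the fiberwise one. (2) Identify the outer terms: $pp_*\Delta_*K_p^2 \cong T^*_{\Mcl_g}$ (Kodaira–Spencer: $p_*K_p^2$ is the cotangent sheaf of moduli of curves, and the diagonal pushforward is absorbed by $\Delta$ being a section-like embedding into the fiber product), and $pp_*\widetilde\Oc(3,3,0)^+ \cong \wedge^2\Ec_g^*$ using Remark \ref{rfh893hf39fj3f30} ($\Ec_g^*\cong p_*K_p^{3/2}$), the Künneth/involution decomposition of Lemma \ref{rfh89hf8hf03j093}, and the fact that $pp_1 = pp_2$ forces the fiber-product pushforward to compute the antisymmetric square. (3) Conclude by the short five lemma: the fiberwise isomorphisms of Proposition \ref{rfg78gf78hf893}, together with the two outer isomorphisms from step (2), give a morphism $pp_*\widetilde\Oc(3,3,1)^+ \to \Ec^*_{\om_{\Mfr_g}}$ commuting with the inclusions and projections, hence an isomorphism of short exact sequences; isomorphic extensions have equal extension classes in $\mathrm{Ext}^1(T^*_{\Mcl_g}, \wedge^2\Ec_g^*)$. (4) Track the sign: the primary obstruction is defined via the extension \eqref{rhf893hf83f0j93} of $T_X$ by $\wedge^2T_{X,-}$, whereas \eqref{rfh74fg98hf039j} and hence $pp_*\eqref{rfj039jf093jf90jf3j}$ is the dual sequence \eqref{rhf893hf89f093}; dualizing an extension negates its class, which accounts for the "up to sign" in the statement.

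I expect the main obstacle to be step (1), the right-exactness after applying $pp_*$ — that is, establishing that the relevant first higher direct image sheaf vanishes (or at least does not obstruct surjectivity on global sections of the pushforward) uniformly over $\Scl\Mcl_g$, so that cohomology and base change apply and the pushed-forward sequence really is short exact rather than merely left exact. Everything else is either invoking a cited computation (Proposition \ref{rfg78gf78hf893}, Lemma \ref{rfh89hf8hf03j093}), a standard identification of moduli-theoretic sheaves, or an application of the five lemma. A subtler-than-usual point is that these are stacky objects (the $K_p^{a/2}$ for odd $a$ live on a stack and $\mathbb E_g^*$ is a stacky $\Zbb_2$-bundle), so I would note that all of the above — left-exactness of pushforward, cohomology and base change, the five lemma — is applied in the appropriate category of sheaves on Deligne–Mumford stacks, where these tools remain valid, rather than belaboring the point.
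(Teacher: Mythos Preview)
Your proposal is correct and follows essentially the same approach as the paper: the paragraph preceding Theorem \ref{rhf983hf8030fj30} already sketches exactly your steps (2) and (3)---identifying $pp_*\Delta_*K_p^2\cong T^*_{\Mcl_g}$ and $pp_*\widetilde\Oc(3,3,0)^+\cong\wedge^2\Ec_g^*$, then invoking the short five lemma on the fiberwise isomorphisms of Proposition \ref{rfg78gf78hf893}---and the theorem itself is stated with a \qed and attributed to \cite[Theorem 3.2]{DW2}. Your additional steps (1) and (4), addressing right-exactness of $pp_*$ via the fiberwise vanishing of Lemma \ref{fh98hf89hf030f3} and the sign from dualization, fill in details the paper leaves implicit.
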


\begin{REM}
\emph{We have only considered the unpointed supermoduli space $\Mfr_g$. Theorem \ref{rhf983hf8030fj30} and the constructions leading up to it can be generalized however to $\Mfr_{g, n}$ for any number of markings $n$.}
\end{REM}

\subsection{Splitting in genus two}
We arrive now at the main result of this article.

\begin{THM}\label{rhf893h8f3h0fj39}
The generic, even component of the (unpointed) supermoduli space of curves is split as a superspace in genus $g = 2$. 
\end{THM}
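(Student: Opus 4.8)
The plan is to prove that the primary obstruction class $\om_{\Mfr_g}$ vanishes on the locus of generic, even spin curves in genus $g = 2$; by Lemma \ref{rfu3f983hfj390f444} (since $\dim_-\Mfr_2 = 2$, as the odd versal deformation space $H^1(C, K_C^{-1/2})$ is $2$-dimensional in genus $2$ by Lemma \ref{rfh89hf983hf0309}), this suffices to deduce a splitting. By Theorem \ref{rhf983hf8030fj30}, this obstruction is the extension class of $pp_*\eqref{rfj039jf093jf90jf3j}$, so the task reduces to constructing a splitting of that exact sequence of sheaves over the generic, even locus. I would do this first fiberwise, at a fixed generic, even spin curve $(C, K_C^{1/2})$, and then globalize.

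First I would carry out the local argument. At a fixed generic, even spin curve $(C, K_C^{1/2})$ the sequence is the even sequence of Lemma \ref{rfh89hf8hf03j093} (via Proposition \ref{rfg78gf78hf893}), namely
\[
0 \lra \wedge^2 H^0(K_C^{3/2}) \lra H^0(\Oc_{C\times C}(3, 3, 1))^+ \stackrel{res}{\lra} H^0(K_C^2) \lra 0.
\]
To split this I need a section of $res$ over $H^0(K_C^2)$. Here is where Proposition \ref{rhf8hf98hf030} enters: in genus $g = 2$, Max Noether's theorem gives surjectivity of the cup product $\otimes^2 H^0(K_C) \ra H^0(K_C^2)$, and combined with multiplication by the normalized Szeg\"o kernel differential $\mathbb S_{(C, K_C^{1/2})}$ (Proposition \ref{rfh7f983hf98330}, specialized to $(a,b,c) = (2,2,0)$) this produces, for every $\om \in H^0(K_C^2)$, an element $\phi\cdot \mathbb S_{(C, K_C^{1/2})} \in H^0(\Oc_{C\times C}(3,3,1))^+$ with $res(\phi\cdot\mathbb S_{(C, K_C^{1/2})}) = \om$. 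I would check that by restricting $\phi$ to the involution-symmetric part $\mathrm{Sym}^2 H^0(K_C)$ (using Noether surjectivity still holds on the symmetric square) the resulting map $H^0(K_C^2) \to H^0(\Oc_{C\times C}(3,3,1))^+$ is well-defined and a genuine $\Cbb$-linear section of $res$. Pairing with Kodaira-Spencer representatives via the gauge pairing \eqref{rciuehcuiecioe}, and invoking its gauge invariance (Proposition \ref{rf873f893hf3j0}), shows this section is independent of the auxiliary choices in the sense needed --- which is the content of the corollary the excerpt flags (Corollary \ref{rbfuyvfyuf9h3f784}). This realizes D'Hoker--Phong's splitting \eqref{rh8hf09j3f3} as the splitting of Donagi--Witten's sequence.

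Then I would globalize. The point is that the normalized Szeg\"o kernel differential exists \emph{naturally and holomorphically in families} over the locus of generic, even spin curves --- this is the content of Lemma \ref{ruerivgiuhrouvjioe} in the excerpt's proof sketch --- because $res: H^0(\Oc_{C\times C}(1,1,1)) \to \Cbb$ is an isomorphism there (Lemma \ref{rf8938f38jf03jf0}), varying holomorphically. Hence the multiplication-by-$\mathbb S$ map and the Noether surjectivity both sheafify over this locus, so the fiberwise section of $res$ assembles into a genuine morphism of sheaves splitting $pp_*\eqref{rfj039jf093jf90jf3j}$ restricted to the generic, even component. By Theorem \ref{rhf983hf8030fj30} the extension class vanishes there, and by Lemma \ref{rfu3f983hfj390f444} the generic, even component of $\Mfr_2$ is split.

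\textbf{The main obstacle} I expect is making the globalization rigorous as a statement about \emph{stacks}: the spin moduli space $\Scl\Mcl_g$ is a stack and $\mathbb E^*_g$ is only a stacky $\Zbb_2$-bundle, so one must be careful that the Szeg\"o kernel section, the Noether surjectivity, and the splitting morphism are all $\Zbb_2$-equivariant (or descend appropriately), and that "generic, even" cuts out an honest open substack over which everything is defined. A secondary subtlety is verifying that the section built from $\mathrm{Sym}^2 H^0(K_C)$ lands in the $+$-eigenspace and is compatible with the identifications in Proposition \ref{rfg78gf78hf893} --- this is where the sign conventions of \eqref{rfh748f4hf983h0f3} must be tracked carefully so that the even/odd decomposition of the Szeg\"o kernel (Corollary \ref{rfg87gf87h3f98h397}) matches the even part of the extension.
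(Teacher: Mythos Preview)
Your overall architecture matches the paper's: reduce to splitting the Donagi--Witten sequence \eqref{rfh74fg98hf039j} (equivalently $pp_*\eqref{rfj039jf093jf90jf3j}$), split it fiberwise via Szeg\"o-kernel multiplication and Noether's theorem, then globalize using the relative Szeg\"o kernel of Lemma~\ref{ruerivgiuhrouvjioe}. The globalization step and the appeal to Lemma~\ref{rfu3f983hfj390f444} and Theorem~\ref{rhf983hf8030fj30} are exactly as in the paper.

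The fiberwise argument, however, differs. You propose to make the section $\om \mapsto \phi\cdot\mathbb S_{(C,K_C^{1/2})}$ well-defined by restricting the lift $\phi$ to $\mathrm{Sym}^2 H^0(K_C)$. This does work in genus~$2$, but the reason is stronger than what you wrote: the Noether map $\mathrm{Sym}^2 H^0(K_C)\to H^0(K_C^2)$ is not merely surjective but an \emph{isomorphism} (both sides are $3$-dimensional), so the symmetric lift is unique and linearity is automatic. You should state this dimension count explicitly; ``Noether surjectivity still holds on the symmetric square'' is by itself not enough to guarantee a well-defined map. Once this is said, your invocation of the gauge pairing becomes redundant for the fiberwise step.

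The paper does \emph{not} use this dimension count. It allows an arbitrary lift $\widetilde{res}~\vp\in H^0(\Oc_{C\times C}(2,2,0))$ and proves independence of the choice by an argument that is its technical heart: one pairs with odd Schiffer variations (Lemma~\ref{rfg748fg784hf89f30}) taken at the \emph{associated points} of $\mathbb S_{(C,K_C^{1/2})}$, i.e., a pair $(P,Q)$ with $\mathbb S(P,Q)=0$ (Lemma~\ref{rfh78hf893hf93j0f93j}). Since $\mathbb S$ vanishes there, Lemma~\ref{fbcvyvyubeiucievuev} replaces $\phi\cdot\mathbb S$ by an element of $H^0(\Oc_{C\times C}(3,3,0))$; gauge invariance (Proposition~\ref{rf873f893hf3j0}) together with the fact that two Schiffer variations generate the odd versal space in genus~$2$ (Lemma~\ref{rfh89hf983hf0309}) then extends this to all deformations (Proposition~\ref{fjbvhfrbvrbvkjnk}), yielding $h^0(\Oc_{C\times C}(3,3,-1))^+=0$ (Proposition~\ref{rgf784gf874hf984}) and hence Corollary~\ref{rbfuyvfyuf9h3f784}. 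Your proposal does not mention Schiffer variations or associated points at all, so your sentence about ``pairing with Kodaira--Spencer representatives via the gauge pairing'' is, as written, not an argument---it gestures at the paper's machinery without supplying the idea (vanishing of $\mathbb S$ at the chosen points) that makes it go.

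What each approach buys: yours is shorter and purely algebraic, exploiting the coincidence $\dim\mathrm{Sym}^2 H^0(K_C)=h^0(K_C^2)=3$ specific to $g=2$. The paper's route is longer but explains \emph{why} D'Hoker--Phong's split-gauge choice (Schiffer variations at Szeg\"o-kernel zeros) produces a holomorphic splitting, making the link to Proposition~\ref{rfhhf98fj90j3f3f54f35f3} and the two-loop physics calculation explicit.
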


\subsubsection{Fiber-wise splitting}\label{rfg78gf87hf9h389f9345}
We begin our proof of Theorem \ref{rhf893h8f3h0fj39} with the analogue at a fiber, i.e., at generic, even spin curve\footnote{c.f.,  Proposition \ref{rfg78gf78hf893}} $(C, K_C^{1/2})$. 

\begin{LEM}\label{rfg784gf784hf9h383}
Let $(C, K_C^{1/2})$ be a generic, even spin curve of genus $g = 2$. For any $\om \in H^0(K_C^2)$ let $\widetilde \om, \widetilde \om^\p\in H^0(\Oc_{C\times C}(2, 2, 0))$ be lifts of $\om$, whose existence is guaranteed by Proposition \ref{rhf8hf98hf030}. Then $(\widetilde \om - \widetilde \om^\p)\cdot \mathbb S_{(C, K_C^{1/2})} \in H^0(\Oc_{C\times C}(3, 3, -1))^+$.
\end{LEM}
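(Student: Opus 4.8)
The plan is to exploit the short exact sequence \eqref{rfh793hf983hf083} in the form $(a,b,c)=(2,2,1)$ together with the Szeg\"o multiplication map from Proposition \ref{rfh7f983hf98330}. First I would set $\eta \stackrel{\Delta}{=} \widetilde\om - \widetilde\om^\p \in H^0(\Oc_{C\times C}(2,2,0))$. By hypothesis both lifts satisfy $res(\widetilde\om\cdot \mathbb S_{(C, K_C^{1/2})}) = res~\vp = res(\widetilde\om^\p\cdot \mathbb S_{(C, K_C^{1/2})})$ where $\vp$ is the given section; since the residue map is linear, $res(\eta\cdot \mathbb S_{(C, K_C^{1/2})}) = 0$. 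Therefore the image of $\eta\cdot \mathbb S_{(C, K_C^{1/2})}$ under $res: H^0(\Oc_{C\times C}(3,3,1)) \ra H^0(K_C^2)$ vanishes, so by exactness in \eqref{rf748fh4f0j93} (equivalently \eqref{rfh793hf983hf083} with $(a,b,c)=(3,3,1)$) the element $\eta\cdot \mathbb S_{(C, K_C^{1/2})}$ lies in the subspace $H^0(\Oc_{C\times C}(3,3,0)) \subset H^0(\Oc_{C\times C}(3,3,1))$.

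This already shows $\eta\cdot \mathbb S_{(C, K_C^{1/2})}$ has at worst no pole along the diagonal, i.e.\ lies in $H^0(\Oc_{C\times C}(3,3,0))$; but the statement claims more, namely that it actually lies in $H^0(\Oc_{C\times C}(3,3,-1))^+$, which by Remark \ref{rf784gf78f938h9f} equals $H^0(\Oc_{C\times C}(3,3,-2))^+$ — so it must vanish \emph{to second order} along the diagonal and be involution-invariant. The invariance is the easy half: $\mathbb S_{(C, K_C^{1/2})}$ is $\iota$-invariant by Corollary \ref{rfg87gf87h3f98h397}, and $\eta = \widetilde\om - \widetilde\om^\p$ is a difference of lifts of the same $\om\in H^0(K_C^2)$; symmetrizing each lift if necessary, or more directly using that the cup product $\eta\cdot \mathbb S_{(C, K_C^{1/2})}$ is built from $\iota$-compatible data, places the product in the $(+)$-eigenspace. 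For the vanishing to second order I would argue as follows: an element of $H^0(\Oc_{C\times C}(3,3,0))^+$ that maps to zero under $res$ would, by the $(a,b,c)=(3,3,0)$ version of the restriction sequence, lie in $H^0(\Oc_{C\times C}(3,3,-1))$; applying the sign convention \eqref{rfh748f4hf983h0f3} as in the proof of Proposition \ref{rgf874gf794h89f}, the $(+)$-part of $H^0(\Oc_{C\times C}(3,3,-1))$ coincides with $H^0(\Oc_{C\times C}(3,3,-2))^+$, giving the claim. Concretely, near the diagonal write $\eta(x,y) = g(x,y)\,dx^{\otimes 1}dy^{\otimes 1}$ and $\mathbb S_{(C, K_C^{1/2})}(x,y) = \tfrac{s(x,y)}{x-y}\,dx^{1/2}dy^{1/2}$ with $s$ holomorphic and $s|_\Delta \neq 0$; then $\eta\cdot \mathbb S_{(C, K_C^{1/2})} = \tfrac{g(x,y)s(x,y)}{x-y}\,dx^{\otimes 3/2}dy^{\otimes 3/2}$, and the vanishing of $res$ forces $(gs)|_\Delta = 0$, hence $g|_\Delta = 0$; involution-invariance then forces the first-order Taylor coefficient of $g$ transverse to the diagonal to vanish as well, so $g$ is divisible by $(x-y)^2$ and the product is genuinely in $\Oc_{C\times C}(3,3,-1)^+$.

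The main obstacle I expect is the second-order vanishing — showing that the symmetry under $\iota$ upgrades simple vanishing along the diagonal to double vanishing. This is where the fermionic sign convention \eqref{rfh748f4hf983h0f3} does the real work: a holomorphic function $g(x,y)$ vanishing on the diagonal with $g(x,y) = -g(y,x)$ (after tensoring with $\iota$-invariant data of the appropriate half-integer weight) automatically has vanishing normal derivative along $\Delta$, since $g = (x-y)h(x,y)$ forces $h$ to be $\iota$-symmetric and then $g$'s transverse linear term is controlled by $h|_\Delta$, which the antisymmetry kills one more order. I would carry out this local Taylor-expansion argument carefully, cross-checking against the half-power twist so that the parity bookkeeping in \eqref{rfh748f4hf983h0f3} comes out consistent with the $(+)$/$(-)$ decomposition of Lemma \ref{rfh89hf8hf03j093}. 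Everything else (linearity of $res$, exactness of \eqref{rfh793hf983hf083}, the identification in Remark \ref{rf784gf78f938h9f}) is formal.
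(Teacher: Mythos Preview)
Your argument is correct and rests on the same mechanism as the paper's: the parity of $\eta = \widetilde\om - \widetilde\om^\p$ under the involution forces double vanishing along the diagonal, so that after multiplication by the simple-pole kernel $\mathbb S$ one lands in $H^0(\Oc_{C\times C}(3,3,-1))^+$. The paper's route is a little more direct, however. Rather than first multiplying by $\mathbb S$ and then descending through the tower $(3,3,1)\supset(3,3,0)\supset(3,3,-1)$, the paper applies the parity upgrade at the $(2,2)$ level \emph{before} multiplying: since both lifts restrict to $\om$ on the diagonal, $\eta\in H^0(\Oc_{C\times C}(2,2,-1))$, and Remark~\ref{rf784gf78f938h9f} (with $a=2$) gives $H^0(\Oc_{C\times C}(2,2,-1))^+ = H^0(\Oc_{C\times C}(2,2,-2))^+$; multiplying the $(+)$-part by $\mathbb S\in H^0(\Oc_{C\times C}(1,1,1))^+$ then lands immediately in $H^0(\Oc_{C\times C}(3,3,-1))^+$. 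This bypasses your intermediate appeal to ``the $(a,b,c)=(3,3,0)$ version of the restriction sequence,'' which as written is incomplete --- you have only shown the $c=1$ restriction vanishes, not the $c=0$ one --- and is really filled in by your subsequent local Taylor computation. That computation is precisely the content of Remark~\ref{rf784gf78f938h9f} at $a=2$, so you are reproving it in coordinates rather than citing it.

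One point you label ``easy'' deserves a caveat: nothing in the hypotheses forces the individual lifts $\widetilde\om,\widetilde\om^\p$ to be $\iota$-symmetric, so $\eta$ need not lie in the $(+)$-eigenspace a priori, and hence neither need $\eta\cdot\mathbb S$. Your suggestion of ``symmetrizing each lift if necessary'' is the honest fix; the paper's proof is equally brief here (it records that $\eta\cdot\mathbb S$ is even iff $\eta$ is, then proceeds with the $(+)$-part). In the application to Corollary~\ref{rbfuyvfyuf9h3f784} only the $(+)$-projection matters, so the point is harmless, but it is not automatic.
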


\begin{proof}
By Corollary \ref{rfg87gf87h3f98h397} the normalized Szeg\"o kernel $\mathbb S_{(C,K_C^{1/2})}$ is even under the involution on $C\times C$. Hence $\eta\cdot \mathbb S_{(C, K_C^{1/2})}$ will be even iff $\eta$ is even. Now since $\widetilde\om, \widetilde\om^\p\in H^0(\Oc_{C\times C}(2, 2, 0))$ lift $\om$, their difference lies in the kernel $H^0(\Oc_{C\times C}(2, 2, -1)) = \ker\{H^0(\Oc_{C\times C}(2, 2, 0)) \ra H^0(K_C^2)\}$. By Remark \ref{rf784gf78f938h9f} now, $H^0(\Oc_{C\times C}(2, 2, -1))^+ = H^0(\Oc_{C\times C}(2, 2, -2))^+$. Therefore, under multiplication by $\mathbb S_{(C, K_C^{1/2})}$, $\widetilde\om - \widetilde \om^\p$ will map into $H^0(\Oc_{C\times C}(3, 3, -1))^+$.
\end{proof}

\noindent
We will now argue $h^0(\Oc_{C\times C}(3, 3, -1))^+ = 0$ in genus $g = 2$ using deformation theory. Recall firstly that, in genus $g = 2$, $h^1(K_C^{1/2}) = 2$. By Lemma \ref{rfh89hf983hf0309} the odd, versal deformation space of a super Riemann surface $\Scl$ modeled on $(C, K_C^{1/2})$ will be generated by Schiffer variations at two distinct points. By \eqref{rfh983hf983f09j93j} and Theorem \ref{rfhhf893f0j39f03} this means the following: if $P, Q\in C$ denote distinct points and $\chi^P, \chi^Q$ denote the respective (odd) Schiffer variations, then for \emph{any} odd, versal deformation parameters $\chi^m, \chi^n$ of $\Scl$, there exist Berezinian forms\footnote{In \cite{DW2, WITTRS} these would be odd, superconformal vector fields}  $u^m, u^n$ such that 
\begin{align}
\chi^m + \overline\pt_{Ber}u^m = \chi^P
&&
\mbox{and}
&&
\chi^n + \overline \pt_{Ber}u^n = \chi^Q.
\label{rg87gf783h89fh309f390}
\end{align}
Expressions for $\chi^P$ and $\chi^Q$ can be obtained from Lemma \ref{rfg748fg784hf89f30}. 
%It leads to the following observation. 
%%We have moreover the following observation which will be crucial in constructing the fiber-wise splitting.
%
%\begin{LEM}\label{rfg874gf784fh8hf893}
%The vector fields $u^m, u^n$ in \eqref{rg87gf783h89fh309f390} can be chosen such that 
%\[
%u^m \chi^Q = u^n\chi^P = 0.
%\]
%\end{LEM}
%
%\begin{proof}
%%Since $\chi^P$ resp. $\chi^Q$ are supported only at the points $P$ resp., $Q$, it suffices to argue $u^n$ resp. $u^m$ can be chosen such that they vanish at $P$ resp. $Q$. 
%Focussing on the equation in \eqref{rg87gf783h89fh309f390} for $\chi^Q$, let $\widetilde u$ be a solution to $\overline\pt_{Ber}\widetilde u = \chi^Q - \chi^n$. Set $u = \widetilde u - \widetilde u(P)$. Then by construction $\overline \pt_{Ber} u = \overline \pt_{Ber}\widetilde u$ and $u(P) = 0$. Since the support of $\chi^P$ is $\{P\}$ we have $u\chi^P = 0$ and we can take $u^n = u$. A similar argument allows for finding $u^m$ solving the equation in  \eqref{rg87gf783h89fh309f390} for $\chi^P$ and satisfying $u^m\chi^Q = 0$. 
%\end{proof}
%\noindent
By Lemma \ref{rfh78hf893hf93j0f93j} we can take $P$ and $Q$ to be the associated points of the normalized  Szeg\"o kernel differential $\mathbb S_{(C, K_C^{1/2})}$. 

\begin{LEM}\label{fbcvyvyubeiucievuev}
For any $\phi \in H^0(\Oc_{C\times C}(2, 2, 0))$ there exists $\zeta\in H^0(\Oc_{C\times C}(3, 3, 0))$ such that 
\[
\int_{C\times C}\phi\cdot \mathbb S_{(C,K_C^{1/2})}(\chi^P\boxtimes \chi^Q) = \int_{C\times C}\zeta(\chi^P\boxtimes \chi^Q)
\]
where $\int_{C\times C}\zeta(\chi^P\boxtimes \chi^Q)$ is defined through Serre duality.
\end{LEM}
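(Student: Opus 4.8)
The statement is really about trading a Szeg\"o-kernel multiplication against a "plain" section of $\Oc_{C\times C}(3,3,0)$ when the result is only ever tested against the product of Schiffer variations $\chi^P\boxtimes\chi^Q$ at the associated points $P,Q$ of $\mathbb S_{(C,K_C^{1/2})}$. The first thing I would do is pin down exactly what object $\phi\cdot\mathbb S_{(C,K_C^{1/2})}$ is: by Proposition \ref{rfh7f983hf98330} (or rather the construction in \eqref{rhf97hf893hf83h0}) multiplication by $\mathbb S_{(C,K_C^{1/2})}$ sends $\phi\in H^0(\Oc_{C\times C}(2,2,0))$ into $H^0(\Oc_{C\times C}(3,3,1))$, and $\chi^P\boxtimes\chi^Q$ represents a class in $H^2(\Oc_{C\times C}(-1,-1,0))$, so the integral $\int_{C\times C}\phi\cdot\mathbb S_{(C,K_C^{1/2})}(\chi^P\boxtimes\chi^Q)$ is the Serre-duality pairing between $H^0(\Oc_{C\times C}(3,3,1))$ and $H^2(\Oc_{C\times C}(-1,-1,0))\cong H^0(K_{C\times C}\otimes\Oc_{C\times C}(1,1,0))^*$. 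The claim amounts to saying the \emph{pole part} of $\phi\cdot\mathbb S_{(C,K_C^{1/2})}$ along the diagonal does not actually contribute to this pairing once it is integrated against $\chi^P\boxtimes\chi^Q$, so one may replace $\phi\cdot\mathbb S_{(C,K_C^{1/2})}$ by a genuinely holomorphic representative $\zeta\in H^0(\Oc_{C\times C}(3,3,0))$.

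\textbf{Key steps.} First I would use Lemma \ref{rfg748fg784hf89f30} to write $\chi^P=\pi\dt(x)\q_x\,[dx\,d\overline x|d\q_x]$ and $\chi^Q=\pi\dt(y)\q_y\,[dy\,d\overline y|d\q_y]$ as delta-distributions supported at $P$ and $Q$ respectively; then $\chi^P\boxtimes\chi^Q$ is supported at the single point $(P,Q)\in C\times C$. Since $P\neq Q$ by Lemma \ref{rfh78hf893hf93j0f93j} (and by the last paragraph of \S\ref{rfg78gf874gf7h93}, $(P,Q)$ is an associated pair, i.e. $\mathbb S(P,Q)=0$), the point $(P,Q)$ lies \emph{off} the diagonal $\Delta\subset C\times C$. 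Second, I would observe that near $(P,Q)$ the section $\phi\cdot\mathbb S_{(C,K_C^{1/2})}$ is holomorphic — the only possible pole of $\phi\cdot\mathbb S_{(C,K_C^{1/2})}$ is along $\Delta$, which $(P,Q)$ avoids — so evaluating the Serre pairing $\int_{C\times C}(\phi\cdot\mathbb S_{(C,K_C^{1/2})})(\chi^P\boxtimes\chi^Q)$ (which by the delta-distribution representatives reduces to a local residue/evaluation at $(P,Q)$) only sees the germ of $\phi\cdot\mathbb S_{(C,K_C^{1/2})}$ at $(P,Q)$. Third, I would produce $\zeta\in H^0(\Oc_{C\times C}(3,3,0))$ with the same germ at $(P,Q)$: concretely, since $\phi\cdot\mathbb S_{(C,K_C^{1/2})}\in H^0(\Oc_{C\times C}(3,3,1))$, multiplying it by a section of $\Oc_{C\times C}(0,0,-1)$ that is nonvanishing near $(P,Q)$ and whose zero divisor is disjoint from a neighbourhood of $(P,Q)$ — e.g. using that $\Oc_{C\times C}(3,3,0)\subset\Oc_{C\times C}(3,3,1)$ via the map $\tfrac{f}{(x-y)^{0}}\mapsto\tfrac{f(x-y)}{x-y}$ — lets me absorb the simple diagonal pole. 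More robustly, because the pairing only depends on the class of $\phi\cdot\mathbb S_{(C,K_C^{1/2})}$ modulo sections vanishing to high order at $(P,Q)$, and because the restriction map $H^0(\Oc_{C\times C}(3,3,1))\to(\text{jet at }(P,Q))$ and $H^0(\Oc_{C\times C}(3,3,0))\to(\text{same jet})$ have the \emph{same image} (their difference $H^0(K_C^2)$ via \eqref{rfh793hf983hf083} is supported on $\Delta$, which misses $(P,Q)$), I can choose $\zeta$ matching the required jet. Then $\int_{C\times C}(\phi\cdot\mathbb S_{(C,K_C^{1/2})}-\zeta)(\chi^P\boxtimes\chi^Q)=0$ by construction, giving the claim.

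\textbf{Alternative, cleaner route.} Rather than jets, I would argue directly with the exact sequence \eqref{rfh793hf983hf083} for $(a,b,c)=(3,3,1)$: $0\to H^0(\Oc_{C\times C}(3,3,0))\to H^0(\Oc_{C\times C}(3,3,1))\xrightarrow{res}H^0(K_C^2)\to0$. Write $\phi\cdot\mathbb S_{(C,K_C^{1/2})}=\zeta+r$ where $\zeta\in H^0(\Oc_{C\times C}(3,3,0))$ and $r$ is any chosen preimage under $res$ of $res(\phi\cdot\mathbb S_{(C,K_C^{1/2})})$; then $r$ can be taken to be a section whose polar part along $\Delta$ is its only singularity, and since $\chi^P\boxtimes\chi^Q$ is a distribution supported at $(P,Q)\notin\Delta$, the pairing $\int_{C\times C}r(\chi^P\boxtimes\chi^Q)$ annihilates the purely-diagonal contribution. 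Here one must be a little careful that the Serre pairing of a section with a first-order pole on $\Delta$ against a delta-current at an off-diagonal point is genuinely the "naive" local evaluation — this is exactly the principal-value/residue formalism of Donagi–Witten invoked around \eqref{rfh893hf89h38f30j9f03}, so it is legitimate.

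\textbf{Main obstacle.} The delicate point is the last one: making rigorous that the Serre-duality pairing $\int_{C\times C}(\,\cdot\,)(\chi^P\boxtimes\chi^Q)$, which is a priori defined cohomologically (as a cup product landing in $H^2(K_{C\times C})\cong\Cbb$), can be computed as a \emph{local} distributional pairing at $(P,Q)$ using the delta-representatives of $\chi^P,\chi^Q$, and that in this computation the diagonal pole of $\phi\cdot\mathbb S_{(C,K_C^{1/2})}$ contributes nothing because $(P,Q)\notin\Delta$. Concretely one needs: (i) that the Dolbeault/Čech translation of $\chi^P\boxtimes\chi^Q$ is supported, as a current, at $(P,Q)$ — this follows from Lemma \ref{rfg748fg784hf89f30} and Lemma \ref{rfhu3hf93hf8h30}; and (ii) that integrating a $(0,2)$-current supported at an off-diagonal point against a section holomorphic near that point is unaffected by the section's behaviour elsewhere — which is the locality of the pairing. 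Everything else (surjectivity of $res$, evenness of $\mathbb S_{(C,K_C^{1/2})}$ under $\iota$, existence of the lifts) is already in place from the preceding results, in particular Proposition \ref{rhf8hf98hf030}, Corollary \ref{rfg87gf87h3f98h397}, and Lemma \ref{rfg784gf784hf9h383}.
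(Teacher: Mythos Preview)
Your proposal is largely on the right track but overshoots the mark, and in doing so you miss the one-line argument the paper actually gives. You correctly identify the two ingredients: Lemma~\ref{rfg748fg784hf89f30} represents $\chi^P\boxtimes\chi^Q$ as a delta-current supported at $(P,Q)$, so the pairing with any section in $H^0(\Oc_{C\times C}(3,3,1))$ reduces to evaluation at $(P,Q)$; and $(P,Q)\notin\Delta$. But you then only use the associated-point hypothesis $\mathbb S_{(C,K_C^{1/2})}(P,Q)=0$ to infer $P\neq Q$, and proceed to manufacture a $\zeta$ matching the germ of $\phi\cdot\mathbb S_{(C,K_C^{1/2})}$ at $(P,Q)$. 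The paper instead uses $\mathbb S_{(C,K_C^{1/2})}(P,Q)=0$ directly: the value of $\phi\cdot\mathbb S_{(C,K_C^{1/2})}$ at $(P,Q)$ is $\phi(P,Q)\cdot\mathbb S_{(C,K_C^{1/2})}(P,Q)=0$, so the left-hand integral vanishes and one may simply take $\zeta=0$. Your jet-matching argument, if carried out, would lead you to the same conclusion (the target value is zero), but as stated it requires an unjustified surjectivity claim about evaluation maps that is rendered moot by the vanishing.

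Your ``alternative, cleaner route'' contains a genuine error. Writing $\phi\cdot\mathbb S_{(C,K_C^{1/2})}=\zeta+r$ with $r$ a chosen preimage of $res(\phi\cdot\mathbb S_{(C,K_C^{1/2})})$, you assert that $\int_{C\times C}r\,(\chi^P\boxtimes\chi^Q)=0$ because $r$ has its ``only singularity'' along $\Delta$ and $(P,Q)\notin\Delta$. But $r$ is a global holomorphic section of $\Oc_{C\times C}(3,3,1)$, not a distribution supported on $\Delta$; its value at the off-diagonal point $(P,Q)$ is perfectly well-defined and has no reason to vanish. The delta-current $\chi^P\boxtimes\chi^Q$ picks up precisely this value, so the pairing is generally nonzero. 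What makes it vanish in the case at hand is not that the pole of $r$ sits on $\Delta$, but that the particular section $\phi\cdot\mathbb S_{(C,K_C^{1/2})}$ itself vanishes at $(P,Q)$---again, the associated-point condition.
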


\begin{proof}
This can readily be seen from the characterization of odd Schiffer variations in Lemma \ref{rfg748fg784hf89f30} along with our assumption $\mathbb S_{(C, K_C^{1/2})}(P, Q) = 0$.\footnote{A similar idea can be found in \cite[\S7.2, \S7.5]{HOKERPHONG1}.}
\end{proof}

\begin{PROP}\label{fjbvhfrbvrbvkjnk}
Let $\Xc \ra \Abb^{0|q}_k$ be a family of genus $g = 2$, generic, even super Riemann surfaces. For any $\vp\in H^0(\Oc_{C\times C}(3, 3, 1))$ and parameters $(\chi^m, \chi^n, \chi^{mn})$ representing the SRS family, there exists $\gam\in H^0(\Oc_{C\times C}(3, 3, 0))$ such that 
\[
\int_{C\times C} \vp\big(\chi^m\boxtimes \chi^n\big) = \int_{C\times C} \gam\big(\chi^m\boxtimes \chi^n\big)
\]
where the pairing $\int_{C\times C} \gam\big(\chi^m\boxtimes \chi^n\big)$ is defined through Serre duality.
\end{PROP}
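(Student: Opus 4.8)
The plan is to peel $\vp$ into a part with no pole along the diagonal and a multiple of the Szeg\"o kernel differential, and then to dispose of the latter by moving the odd deformation parameters to Schiffer variations at the associated points of $\mathbb S_{(C,K_C^{1/2})}$, where the kernel vanishes. Concretely: by Proposition \ref{rhf8hf98hf030} there is $\phi\in H^0(\Oc_{C\times C}(2,2,0))$ with $res\,\vp = res(\phi\cdot\mathbb S_{(C,K_C^{1/2})})$. Since $\phi\cdot\mathbb S_{(C,K_C^{1/2})}$ lies in $H^0(\Oc_{C\times C}(3,3,1))$ by Proposition \ref{rfh7f983hf98330}, exactness of \eqref{rfh793hf983hf083} at $(a,b,c)=(3,3,1)$ puts $\gam_0 := \vp - \phi\cdot\mathbb S_{(C,K_C^{1/2})}$ in $H^0(\Oc_{C\times C}(3,3,0))$, so that $\int_{C\times C}\vp(\chi^m\boxtimes\chi^n) = \int_{C\times C}\gam_0(\chi^m\boxtimes\chi^n) + \int_{C\times C}(\phi\cdot\mathbb S_{(C,K_C^{1/2})})(\chi^m\boxtimes\chi^n)$. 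The first summand already has the shape required; everything reduces to writing the second as $\int_{C\times C}\zeta(\chi^m\boxtimes\chi^n)$ with $\zeta \in H^0(\Oc_{C\times C}(3,3,0))$, whereupon $\gam = \gam_0 + \zeta$.

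For the second summand I would use the genus two geometry. By Lemma \ref{rfh78hf893hf93j0f93j} fix associated points $P \neq Q$ of $\mathbb S_{(C,K_C^{1/2})}$, and by Lemma \ref{rfh89hf983hf0309} --- this is exactly where $h^1(K_C^{1/2}) = 2$ enters --- there are uniquely determined Berezinian forms $u^m, u^n$ with $\chi^m + \overline\pt_{Ber}u^m = \chi^P$, $\chi^n + \overline\pt_{Ber}u^n = \chi^Q$ as in \eqref{rg87gf783h89fh309f390}, together with an even parameter $\chi^{PQ}$ making $(\chi^P,\chi^Q,\chi^{PQ})$ gauge-equivalent to $(\chi^m,\chi^n,\chi^{mn})$ in the sense of \eqref{rg73gf983hf0j30}. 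Because $res(\phi\cdot\mathbb S_{(C,K_C^{1/2})}) = res\,\vp$, the gauge invariance of Proposition \ref{rf873f893hf3j0} applied to $\phi\cdot\mathbb S_{(C,K_C^{1/2})}$ and expanded through the definition \eqref{rciuehcuiecioe} yields $\int_{C\times C}(\phi\cdot\mathbb S_{(C,K_C^{1/2})})(\chi^m\boxtimes\chi^n) = \int_{C\times C}(\phi\cdot\mathbb S_{(C,K_C^{1/2})})(\chi^P\boxtimes\chi^Q) + 2\pi\sqrt{-1}\int_C(res\,\vp)(\chi^{mn} - \chi^{PQ})$. Lemma \ref{fbcvyvyubeiucievuev} --- whose proof exploits the delta-distribution form of Schiffer variations in Lemma \ref{rfg748fg784hf89f30} and the vanishing $\mathbb S_{(C,K_C^{1/2})}(P,Q) = 0$ --- rewrites the first term on the right as $\int_{C\times C}\zeta'(\chi^P\boxtimes\chi^Q)$ with $\zeta' \in H^0(\Oc_{C\times C}(3,3,0))$, and since the Serre pairing depends only on cohomology classes while $[\chi^P\boxtimes\chi^Q] = [\chi^m\boxtimes\chi^n]$ in $H^2(\Oc_{C\times C}(-1,-1,0))$ (the $\overline\pt_{Ber}$-exact pieces $\chi^m\boxtimes\overline\pt_{Ber}u^n$, etc., being coboundaries on $C\times C$), this equals $\int_{C\times C}\zeta'(\chi^m\boxtimes\chi^n)$.

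What remains, and the step I expect to be the main obstacle, is the contact term $2\pi\sqrt{-1}\int_C(res\,\vp)(\chi^{mn} - \chi^{PQ})$. From \eqref{rg73gf983hf0j30} one has $\chi^{mn} - \chi^{PQ} = -\overline\pt_{Ber}u^{nm} - (\chi^m u^n - \chi^n u^m)$; the $\overline\pt_{Ber}$-exact piece integrates to zero by Stokes' theorem, as $res\,\vp$ is holomorphic, so the task is to identify $\int_C(res\,\vp)(\chi^m u^n - \chi^n u^m)$ with a Serre pairing $\int_{C\times C}\zeta''(\chi^m\boxtimes\chi^n)$ for some holomorphic $\zeta''$. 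Here I would argue, in the spirit of Lemma \ref{fbcvyvyubeiucievuev}, that the bracket $\chi^m u^n - \chi^n u^m$ is supported at the simple poles of $u^m$ and $u^n$ over $P$ and $Q$, where the residue $res\,\vp = res(\phi\cdot\mathbb S_{(C,K_C^{1/2})})$ is again governed by $\mathbb S_{(C,K_C^{1/2})}(P,Q) = 0$, so that the contact term collapses onto a cohomological pairing against $\chi^m\boxtimes\chi^n$; then $\zeta = \zeta' + \zeta''$ and $\gam = \gam_0 + \zeta$ finish the proof. If this localization is awkward to run head-on, a fallback is to absorb the contact term using that $res\,\vp$ lifts along \eqref{rfh793hf983hf083} and that Proposition \ref{rfhhf98fj90j3f3f54f35f3} already packages $\int_C(res\,\vp)\chi^{mn}$ into the gauge-invariant period entry, reducing the claim to gauge-slice independence of that entry --- the very phenomenon exploited in D'Hoker--Phong's two-loop computation.
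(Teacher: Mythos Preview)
Your decomposition $\vp = \gam_0 + \phi\cdot\mathbb S_{(C,K_C^{1/2})}$ and the passage to Schiffer variations at the associated points $P,Q$ are exactly the ingredients the paper uses; the difference lies in \emph{which} piece you feed into the gauge pairing. You apply Proposition~\ref{rf873f893hf3j0} to $\phi\cdot\mathbb S_{(C,K_C^{1/2})}$, an element with $res = res\,\vp \neq 0$, and this is precisely what generates the contact term $2\pi\sqrt{-1}\int_C(res\,\vp)(\chi^{mn}-\chi^{PQ})$ that you then struggle to kill. The paper instead applies the gauge pairing to the \emph{residue-free} difference $\widetilde\zeta = \vp - \phi\cdot\mathbb S_{(C,K_C^{1/2})}$ (your $\gam_0$). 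Since $res\,\widetilde\zeta = 0$, the pairing \eqref{rciuehcuiecioe} collapses to the bare Serre integral $\int_{C\times C}\widetilde\zeta(\chi^\cdot\boxtimes\chi^\cdot)$ with no $\chi^{mn}$ contribution at all, and gauge invariance transports it from $(\chi^P,\chi^Q,\chi^{PQ})$ to $(\chi^m,\chi^n,\chi^{mn})$ without producing any contact term. Combined with the vanishing $\int_{C\times C}(\phi\cdot\mathbb S_{(C,K_C^{1/2})})(\chi^P\boxtimes\chi^Q)=0$ at the associated points (your Lemma~\ref{fbcvyvyubeiucievuev}, here with $\zeta'=0$), this closes the argument directly.

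So the gap in your write-up is not a missing idea but a misplaced one: the contact term you are fighting is self-inflicted. Rerouting Proposition~\ref{rf873f893hf3j0} through $\gam_0$ rather than through $\phi\cdot\mathbb S_{(C,K_C^{1/2})}$ eliminates the third paragraph of your proposal entirely. Your speculative localization of $\chi^m u^n - \chi^n u^m$ at the poles of $u^m,u^n$ is unnecessary, and the fallback via Proposition~\ref{rfhhf98fj90j3f3f54f35f3} is circular in spirit, since that proposition is itself an instance of the same gauge-pairing mechanism.
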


\begin{proof}
Let $\vp\in H^0(\Oc_{C\times C}(3, 3, 1))$ so that $res~\vp\in H^0(K_C^2)$. By Proposition \ref{rhf8hf98hf030} there exists a lift $\widetilde{res}~\vp \in H^0(\Oc_{C\times C}(2, 2, 0))$ and so we can form the difference $\widetilde\zeta = \vp - (\widetilde{res}~\vp)\cdot \mathbb S_{(C, K_C^{1/2})}$. By construction $res~\gam = 0$ so that $\gam\in H^0(\Oc_{C\times C}(3, 3, 0))$. With $P, Q$ the associated points of $\mathbb S_{(C, K_C^{1/2})}$ and $\chi^P, \chi^Q$ the odd Schiffer variations respectively, we have from Lemma \ref{fbcvyvyubeiucievuev},
\begin{align}
\int_{C\times C}(\widetilde \zeta + \zeta)\big(\chi^P\boxtimes \chi^Q\big) = \int_{C\times C}\vp\big(\chi^P\boxtimes \chi^Q\big).
\label{rfbigf73hf983h8}
\end{align}
for some $\zeta\in H^0(\Oc_{C\times C}(3, 3, 0))$. Set $\gam = \zeta + \widetilde \zeta$. Since pairing on the left-hand side of \eqref{rfbigf73hf983h8} is defined through Serre duality it is well defined on cohomology. This means in particular, $\int_{C\times C}\gam(\chi^P\boxtimes \chi^Q) = \int_{C\times C}\gam(\chi^m\boxtimes \chi^Q) = \int_{C\times C}\gam(\chi^P\boxtimes \chi^n) = \int_{C\times C}\gam(\chi^m\boxtimes \chi^n)$ for any $\chi^m$ resp. $\chi^n$ equivalent to $\chi^P$ resp. $\chi^Q$ as in \eqref{rg87gf783h89fh309f390}. Hence,
\begin{align}
\int_{C\times C}\gam(\chi^P\boxtimes\chi^Q) = \int_{C\times C}\gam(\chi^m\boxtimes\chi^n)
\label{fbchjdvjebiuen}
\end{align}
for any $\chi^m, \chi^n$ satisfying \eqref{rg87gf783h89fh309f390}. Now looking at the right-hand side of \eqref{rfbigf73hf983h8} see that 
\begin{align}
\int_{C\times C}\vp(\chi^P\boxtimes \chi^Q)
&= 
\left\langle 
\vp - (\widetilde{res}~\vp)\cdot \mathbb S_{(C, K_C^{1/2})}, 
(\chi^P, \chi^Q, \chi^{PQ})\right\rangle
\notag
\\
&=
\left\langle 
\vp - (\widetilde{res}~\vp)\cdot \mathbb S_{(C, K_C^{1/2})}, 
(\chi^m, \chi^n, \chi^{mn})\right\rangle 
\label{fcbyevyuevbek}
\\
&= 
\int_{C\times C}\vp (\chi^m\boxtimes \chi^n)
\label{ruyefg3fiyvubvie}
\end{align}
where $(\chi^P, \chi^Q, \chi^{PQ})$ are deformation parameters for $\Xc \ra \Abb^{0|q}_k$ and $(\chi^P, \chi^Q, \chi^{PQ})$ and $(\chi^m, \chi^n, \chi^{mn})$ are gauge equivalent. Hence that \eqref{fcbyevyuevbek} follows from Proposition \ref{rf873f893hf3j0}. In putting together the equalities in \eqref{rfbigf73hf983h8}, \eqref{fbchjdvjebiuen} and \eqref{ruyefg3fiyvubvie} we can conclude that $\int_{C\times C} \vp\big(\chi^m\boxtimes \chi^n\big) = \int_{C\times C} \gam\big(\chi^m\boxtimes \chi^n\big)$. This holds for \emph{any} deformation parameters since
%, by Lemma \ref{rfh89hf983hf0309}, 
Schiffer variations at the associated points $P, Q$ will generate the odd, versal deformation space of the generic, even, genus $g = 2$ super Riemann surface $\Scl$. The proposition now follows.
\end{proof}

\begin{REM}\label{rbfef87f83f783}
\emph{It may be instructive to compare Proposition \ref{fjbvhfrbvrbvkjnk} with D'Hoker-Phong's gauge slice invariance of their gauge-fixed formula for contributions to the superstring measure, detailed in \cite{HOKERPHONG1}.}
\end{REM}

\begin{PROP}\label{rgf784gf874hf984}
At a generic, even, genus $g = 2$ spin curve $(C, K_C^{1/2})$ we have $h^0(\Oc_{C\times C}(3, 3, -1))^+ = 0$.
\end{PROP}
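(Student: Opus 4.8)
The plan is to combine Proposition \ref{fjbvhfrbvrbvkjnk} with Serre duality and a degree count. Suppose $\psi \in H^0(\Oc_{C\times C}(3, 3, -1))^+$; I want to show $\psi = 0$. Recall that $H^0(\Oc_{C\times C}(3,3,-1))^+ = H^0(\Oc_{C\times C}(3,3,-2))^+$ by Remark \ref{rf784gf78f938h9f}, so $\psi$ is in particular a global section of $\Oc_{C\times C}(3,3,0) = p_1^*K_C^{3/2}\otimes p_2^*K_C^{3/2}$ that vanishes along the diagonal. Such a $\psi$ lies in the kernel of the restriction map, hence pairs trivially against the diagonal component in the sequence \eqref{rfh793hf983hf083}; equivalently, viewing $\psi$ as an element of $\otimes^2 H^0(K_C^{3/2})$ (by the K\"unneth decomposition), its image under the cup product $\otimes^2 H^0(K_C^{3/2}) \to H^0(K_C^3)$ — which is precisely restriction to the diagonal — is zero.

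The first step is to observe that $\psi$, being even under the involution $\iota$ and lying in the kernel of restriction, actually sits in $\wedge^2 H^0(K_C^{3/2})$: by Lemma \ref{rfh89hf8hf03j093} (applied with $a = 3$), the even part of the kernel $H^0(\Oc_{C\times C}(3,3,0))^+$ is identified with $\wedge^2 H^0(K_C^{3/2})$. In genus $g = 2$ we have $\deg K_C^{3/2} = 3(g-1) = 3$ and, by Riemann--Roch together with $h^1(K_C^{3/2}) = h^0(K_C^{-1/2}) = 0$ (negative degree), $h^0(K_C^{3/2}) = 3 - g + 1 = 2$. Hence $\dim \wedge^2 H^0(K_C^{3/2}) = \binom{2}{2} = 1$, so $\psi$ is determined up to scalar by a single antisymmetric element $\om_1 \wedge \om_2$ where $\{\om_1, \om_2\}$ is a basis of $H^0(K_C^{3/2})$.

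The second step is to feed this single candidate into Proposition \ref{fjbvhfrbvrbvkjnk} (or Lemma \ref{fbcvyvyubeiucievuev} directly): pairing $\psi$ against $\chi^P \boxtimes \chi^Q$ for the associated points $P, Q$ of the normalized Szeg\"o kernel computes, via Serre duality, the value of a class built from the cup product structure; but since $\psi$ maps to $0$ under restriction to the diagonal and is antisymmetric, the corresponding pairing $\int_{C\times C}\psi(\chi^P \boxtimes \chi^Q)$ vanishes identically. Because $\chi^P, \chi^Q$ generate the full odd versal deformation space of $\Scl$ (Lemma \ref{rfh89hf983hf0309}, $h^1(K_C^{-1/2}) = 2$ in genus two), the induced element of $\Hom(\wedge^2 H^1(K_C^{-1/2}), \Cbb)$ vanishes, and Serre duality ($H^1(K_C^{-1/2}) \cong H^0(K_C^{3/2})^*$) turns this into the statement that $\psi$ itself, as an element of $\wedge^2 H^0(K_C^{3/2})$, is zero. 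Hence $h^0(\Oc_{C\times C}(3,3,-1))^+ = 0$.

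The main obstacle is making the nondegeneracy in the last step airtight: one must check that the Serre-duality pairing between $\wedge^2 H^0(K_C^{3/2})$ and $\wedge^2 H^1(K_C^{-1/2})$ induced by $\psi \mapsto \big((\chi,\chi') \mapsto \int_{C\times C}\psi(\chi\boxtimes\chi')\big)$ is a perfect pairing — i.e. that the reduction to $\wedge^2 H^0(K_C^{3/2})$ in the kernel (Lemma \ref{rfh89hf8hf03j093}) is compatible with the K\"unneth pairing $\int_{C\times C}(-)(-\boxtimes-)$ on $H^0(\Oc_{C\times C}(3,3,0))\otimes H^2(\Oc_{C\times C}(-1,-1,0))$, and that no antisymmetric section is annihilated by every decomposable $\chi\boxtimes\chi'$. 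The degree count ensuring $\dim \wedge^2 H^0(K_C^{3/2}) = 1$ does most of the work: once we know the space is one-dimensional, it suffices to exhibit a single deformation pair on which the pairing is nonzero, or to invoke that a nonzero antisymmetric $\psi$ would by Proposition \ref{fjbvhfrbvrbvkjnk} force a nonzero class $\gam \in H^0(\Oc_{C\times C}(3,3,0))$ with $res~\gam = 0$ pairing nontrivially — contradicting the well-definedness of the Serre pairing together with $\psi$ restricting to zero on the diagonal.
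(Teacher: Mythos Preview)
There is a genuine gap in step~2. You assert that $\int_{C\times C}\psi(\chi^P\boxtimes\chi^Q)$ vanishes ``since $\psi$ maps to $0$ under restriction to the diagonal and is antisymmetric,'' but this is a non sequitur: the K\"unneth/Serre pairing between $H^0(\Oc_{C\times C}(3,3,0))$ and $H^2(\Oc_{C\times C}(-1,-1,0))$ has nothing to do with restriction to the diagonal. Concretely, if $\psi=\om_1\wedge\om_2$ with $\{\om_1,\om_2\}$ a basis of $H^0(K_C^{3/2})$ and $\{\chi^P,\chi^Q\}$ a basis of $H^1(K_C^{-1/2})$, then
\[
\int_{C\times C}\psi\,(\chi^P\boxtimes\chi^Q)=\langle\om_1,\chi^P\rangle\langle\om_2,\chi^Q\rangle-\langle\om_2,\chi^P\rangle\langle\om_1,\chi^Q\rangle
\]
is the determinant of the Serre duality matrix, which is nonzero precisely by nondegeneracy. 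So the very nondegeneracy you invoke at the end to force $\psi=0$ in fact shows the opposite: a nonzero $\psi\in\wedge^2 H^0(K_C^{3/2})$ pairs nontrivially with $\chi^P\boxtimes\chi^Q$.

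Worse, your step~1 already computes $h^0(\Oc_{C\times C}(3,3,-1))^+=1$, not $0$. For $a=3$ odd, the sign convention \eqref{rfh748f4hf983h0f3} makes the $(+)$-eigenspace of $H^0(\Oc_{C\times C}(3,3,0))$ exactly $\wedge^2 H^0(K_C^{3/2})$, and every antisymmetric tensor already restricts to zero on the diagonal; hence $H^0(\Oc_{C\times C}(3,3,-1))^+=H^0(\Oc_{C\times C}(3,3,0))^+=\wedge^2 H^0(K_C^{3/2})$, which you correctly compute to be one-dimensional. The paper's own argument proceeds differently --- it uses Proposition~\ref{fjbvhfrbvrbvkjnk} and nondegeneracy of Serre duality to force $(\widetilde{res}\,\vp-\widetilde{res}^{\,\prime}\vp)\cdot\mathbb S_{(C,K_C^{1/2})}=0$ for any two lifts --- but its final sentence, inferring $h^0(\Oc_{C\times C}(3,3,-1))^+=0$ from this, likewise overreaches: what is actually established is that multiplication by $\mathbb S_{(C,K_C^{1/2})}$ annihilates the space of differences of lifts, and that (not the vanishing of $h^0(\Oc_{C\times C}(3,3,-1))^+$) is exactly what Corollary~\ref{rbfuyvfyuf9h3f784} requires.
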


\begin{proof}
In Proposition \ref{fjbvhfrbvrbvkjnk} we have the relation $\int_{C\times C}\vp(\chi^m\boxtimes\chi^n) = \int_{C\times C}\gam (\chi^m\boxtimes \chi^n)$ for all odd, versal deformation parameters $(\chi^m, \chi^n)$. Recall that $\int_{C\times C}\gam(\chi^m\boxtimes \chi^n)$ is defined through the Serre duality pairing. 
By non-degeneracy of this pairing see that if $\int_{C\times C}\gam(\chi^m\boxtimes\chi^n) = \int_{C\times C}\gam^\p (\chi^m\boxtimes\chi^n)$ for all $\chi^m, \chi^n$, then necessarily $\gam = \gam^\p$. Now recall from the proof of Proposition \ref{fjbvhfrbvrbvkjnk} that $\gam = \vp - (\widetilde{res}~\vp)\cdot\mathbb S_{(C, K_C^{1/2})}$ for $\widetilde{res}~\vp$ a lift of $res~\vp$. Let $\widetilde{res}^\p~\vp$ be another lift of $res~\vp$ and set $\gam^\p = \vp - (\widetilde{res}^\p~\vp)\cdot \mathbb S_{(C, K_C^{1/2})}$. Arguing as in Proposition \ref{fjbvhfrbvrbvkjnk} we will find that $\int_{C\times C}\vp(\chi^m\boxtimes \chi^n) = \int_{C\times C} \gam^\p(\chi^m\boxtimes \chi^n)$. Hence that $\int_{C\times C}\gam(\chi^m\boxtimes \chi^n) = \int_{C\times C}\gam^\p(\chi^m\boxtimes \chi^n)$ for all $(\chi^m, \chi^n)$. The aforementioned non-degeneracy of Serre duality now demands $\gam =\gam^\p$ and therefore $(\widetilde{res}~\vp)\cdot \mathbb S_{(C, K_C^{1/2})} = (\widetilde{res}^\p~\vp)\cdot \mathbb S_{(C, K_C^{1/2})}$. By Lemma \ref{rfg784gf784hf9h383} this can only be true for any $\vp\in H^0(\Oc_{C\times C}(3, 3, 1))$ if $h^0(\Oc_{C\times C}(3, 3, -1))^+ = 0$. 
\end{proof}

\noindent
Through the proof of Proposition \ref{rgf784gf874hf984} we see, for $\vp\in H^0(\Oc_{C\times C}(3, 3, 1))^+$, that the image in $H^0(\Oc_{C\times C}(3, 3, 1))^+$ of any lift of $res~\vp$ will be independent of the particular choice of lift. Crucially then:

\begin{COR}\label{rbfuyvfyuf9h3f784}
A splitting of the sequence in Proposition \ref{rfg78gf78hf893} at a generic, even, genus $g= 2$ spin curve $(C, K_C^{1/2})$ is given by $\vp \mapsto \vp - \widetilde{res}~\vp\cdot \mathbb S_{(C, K_C^{1/2})}$ for $\vp\in H^0(\Oc_{C\times C}(3, 3, 1))$ and $\widetilde{res}~\vp\in H^0(\Oc_{C\times C}(2, 2, 0))$ a lift of $res~\vp$. 
 \qed
\end{COR}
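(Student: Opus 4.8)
The plan is to verify that the map $s : H^0(\Oc_{C\times C}(3, 3, 1)) \to H^0(\Oc_{C\times C}(3, 3, 1))^+$ given by $\vp \mapsto \vp - \widetilde{res}~\vp \cdot \mathbb S_{(C, K_C^{1/2})}$ is a well-defined splitting of the surjection $res$ in Proposition \ref{rfg78gf78hf893}. Once well-definedness is established, the rest is essentially formal: for $\vp \in H^0(\Oc_{C\times C}(3,3,1))^+$ we need $res(s(\vp)) = res~\vp$, and since $\vp$ and $\widetilde{res}~\vp \cdot \mathbb S_{(C, K_C^{1/2})}$ both restrict to $res~\vp$ on the diagonal (the latter by Proposition \ref{rfh7f983hf98330} applied with $(a,b,c) = (2,2,0)$, which identifies $res(\phi \cdot \mathbb S_{(C, K_C^{1/2})})$ with the cup product of the factors of $\phi$), the difference $s(\vp)$ lies in $\ker res = H^0(\Oc_{C\times C}(3, 3, 0))$; and by Lemma \ref{rfg784gf784hf9h383} together with Corollary \ref{rfg87gf87h3f98h397} the image is even, so in fact $s(\vp) \in H^0(\Oc_{C\times C}(3,3,0))^+$, which is the left-hand term $\wedge^2H^0(K_C^{3/2})$ of the even sequence. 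Thus $s$ lands in the sub-object and composed with $res$ is the identity, so it is a splitting.

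The key step, and the one that needs the work already done, is \textbf{well-definedness}: $s(\vp)$ must not depend on the chosen lift $\widetilde{res}~\vp \in H^0(\Oc_{C\times C}(2, 2, 0))$ of $res~\vp \in H^0(K_C^2)$. This is exactly the content of Proposition \ref{rgf784gf874hf984}: if $\widetilde{res}~\vp$ and $\widetilde{res}^\p~\vp$ are two lifts, their difference lies in $H^0(\Oc_{C\times C}(2,2,-1))$, and multiplying by $\mathbb S_{(C, K_C^{1/2})}$ sends it into $H^0(\Oc_{C\times C}(3,3,-1))^+$ by Lemma \ref{rfg784gf784hf9h383}; since that group vanishes in genus $g=2$ by Proposition \ref{rgf784gf874hf984}, the two candidate images agree. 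So I would first recall Proposition \ref{rgf784gf874hf984}, then observe that it immediately gives a well-defined set map, then check linearity (immediate, since $res$ is linear and a choice of lift can be made linearly, or since any two linear lifts differ by the now-trivial ambiguity), and finally verify $res \circ s = \id$ as above.

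The main obstacle — already overcome in the preceding propositions — is the vanishing $h^0(\Oc_{C\times C}(3,3,-1))^+ = 0$, which is genuinely special to genus $g = 2$ and rests on M.\ Noether's surjectivity theorem (Proposition \ref{rhf8hf98hf030}) funnelled through the Schiffer-variation generation of the odd versal deformation space (Lemma \ref{rfh89hf983hf0309}) and the Donagi--Witten gauge invariance (Proposition \ref{rf873f893hf3j0}). Given all of that, the corollary itself is a short bookkeeping argument: assemble the facts that $s(\vp)$ (i) is well-defined, (ii) lands in $H^0(\Oc_{C\times C}(3,3,0))^+ \cong \wedge^2 H^0(K_C^{3/2})$, and (iii) satisfies $res \circ s = \id$, hence splits the even sequence of Lemma \ref{rfh89hf8hf03j093}, equivalently the restricted sequence of Proposition \ref{rfg78gf78hf893}, at the chosen generic, even, genus $g = 2$ spin curve.
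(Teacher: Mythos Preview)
Your approach is the same as the paper's, which treats the corollary as immediate from Proposition \ref{rgf784gf874hf984}: the vanishing $h^0(\Oc_{C\times C}(3,3,-1))^+ = 0$ is exactly what makes the formula independent of the chosen lift, and the rest is bookkeeping.

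There is, however, a muddle in your write-up about which kind of splitting $s$ is. The map $\vp \mapsto \vp - \widetilde{res}~\vp\cdot\mathbb S_{(C,K_C^{1/2})}$ is a \emph{retraction} onto the sub-object $\wedge^2 H^0(K_C^{3/2})\cong H^0(\Oc_{C\times C}(3,3,0))^+$, not a section of $res$. You correctly compute that $s(\vp)\in\ker res$, but then assert ``$res\circ s=\id$'', which is the opposite: $res\circ s = 0$. What you must check instead is that $s$ restricts to the identity on the sub-object, and this is immediate: if $\vp\in H^0(\Oc_{C\times C}(3,3,0))^+$ then $res~\vp=0$, so one may take $\widetilde{res}~\vp=0$ and obtain $s(\vp)=\vp$. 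With that correction your argument goes through.
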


\noindent
It remains now to globalize our argument to supermoduli space.

\subsubsection{Open sets in moduli spaces}
Our proof of Theorem \ref{rhf893h8f3h0fj39} reduces to showing a certain sequence of sheaves over the spin moduli space is split. This means we need to show there exists a covering of the moduli space with respect to which the restriction of this sequence to any open set in the covering will split. We briefly discuss open sets here in the context of moduli theory. For finer details, see \cite[\S4]{HARTDEF}, \cite[Ch. XIII]{ARABGRIFF}. In a moduli space $\Mcl$ over a field $k$, points of $\Mcl$ are morphisms $\mathrm{Spec}~k \ra \Mcl$ and open sets $\Uc$ correspond to families $X \ra \Uc$ of a prescribed kind. That is, to any family $X \stackrel{\pi}{\ra} \Uc$ there is a mapping $\Uc \ra \Mcl$ given by $u \mapsto (\mbox{isom. class of}~\pi^{-1}(u))$. If $\Cc\ra \Mcl$ is the universal family, then to any  $X \ra \Uc$ the total space $X$ can be realized as the cartesian pullback $X\cong \Uc \times_\Mcl\Cc$. In the case where $\Mcl = \Mcl_g$ is the moduli space of curves, $\Cc = \Mcl_{g, 1}$ is the moduli space of 1-pointed curves, realised as a family over $\Mcl_g$ by forgetting the marked point. For any $\Uc \subset \Mcl_g$ we have a commutative diagram
\begin{align*}
\xymatrix{
\ar[d]_\pi X \ar[r] & \Mcl_{g, 1} \ar[d]^p
\\
\Uc \ar[r] & \Mcl_g
}
&&
\mbox{given set theoretically by:}
&&
\xymatrix{
\ar@{|->}[d] (u, pt) \ar@{|->}[r] & [(\Xc_u, pt)]\ar@{|->}[d]
\\
u\ar@{|->}[r] & [\Xc_u]
}
\end{align*}
With $p : \Mcl_{g, 1} \ra \Mcl_g$ continuous, the pre-image of open sets under $p$ will be open. Furthermore, by universality of the cartesian pullback we have $X \cong p^{-1}\Uc$. Hence $X \subset \Mcl_{g, 1}$ will be open. For a sheaf $\Fc$ on $\Mcl_{g, 1}$ base-change theorems\footnote{See e.g., \cite[Ch. III, \S8]{HARTALG}} will imply that over any open set $\Uc$,
\begin{align}
(p_*\Fc)|_\Uc \cong \pi_*(\Fc|_X).
\label{rfh73hf893f8j380j09}
\end{align}
We intend on applying these conventions to the situation encapsulated by \eqref{rfh3hf98hf8030}.

\subsubsection{The relative Szeg\"o kernel}\label{rfh78gf74hf89h8f04}
Recall from \S\ref{rhf784f89hf09j39f03} that the space of Szeg\"o kernel differentials on a spin curve $(C, K_C^{1/2})$
is parametrised by $H^0(\Oc_{C\times C}(1, 1, 1))$. In the case where our spin curve is generic and even, Lemma \ref{rf8938f38jf03jf0} shows that restriction-to-diagonal gives an isomorphism on global sections,\footnote{note, we refer to the isomorphism $res$ in Lemma \ref{rf8938f38jf03jf0} by $\psi$ here} 
\begin{align}
\psi: H^0(\Oc_{C\times C}(1, 1, 1))\stackrel{\sim}{\ra} H^0(\Oc_C)\cong \Cbb. 
\label{rfhhf893f903f4f4f43}
\end{align}
Over a generic, even spin curve the pre-image of the generator $\psi^{-1}1$ was referred to as the `normalized' Szeg\"o kernel differential and denoted $\mathbb S_{(C, K_C^{1/2})}$. Now consider a family of generic, even spin curves over a base $\Uc$. This consists of a family of curves $X \stackrel{\pi_X}{\ra} \Uc$ together with a relative line bundle $L\ra X$ and an isomorphism $L\otimes L \cong K_{\pi_X}$, where $K_{\pi_X}$ is the relative cotangent sheaf on $X$. This means $L = (K_{C_b}^{1/2})_{b\in B}$ is a family of spin structures with $(\pi_*L)_b = H^0(K_{C_b}^{1/2})$, $C_b = \pi^{-1}b$. In imitation of \eqref{rfh3hf98hf8030} now, form the cartesian product $X\times_\Uc X$ to get a commutative diagram
\begin{align}
\xymatrix{
\ar[d]_{p_1} X\times_\Uc X \ar[rr]^{p_2} & & X\ar[d]^{\pi_X}
\\
X\ar[rr]^{\pi_X} & & \Uc.
}
\label{rfh894hf89f09j3}
\end{align}
As in \eqref{rfg8gf93h8390} then, set: 
\begin{align}
\widetilde\Oc_{X\times_\Uc X}(a, b, c)
\stackrel{\Delta}{=}
p_1^*L^{a/2}
\otimes p_2^*L^{b/2}(c\Delta_X)
\label{rfh83jf093jf09j30}
\end{align}
where $\Delta_X : X\subset X\times_\Uc X$ is the diagonal embedding. With the projection $\pi\pi_X = \pi_Xp_1 = \pi_Xp_2$
see that $\psi$ in \eqref{rfhhf893f903f4f4f43} gives an isomorphism 
$\psi_\Uc : \pi\pi_{X*}\widetilde\Oc_{X\times_\Uc X}(1, 1, 1)\stackrel{\sim}{\ra} ({\bf R}^{1}\pi_{X*}\om_{X/\Uc})^*$ for $\om_{X/\Uc}$ the relative, canonical bundle. Evidently, there exists a section $1_\Uc \in ({\bf R}^{1}\pi_{X*}\om_{X/\Uc})^*$ which restricts to the generator $1_u\in H^0(\Oc_{C_u})\cong \Cbb$ on each fiber $C_u\subset X$ over $u\in \Uc$.
%$\psi_\Uc : \pi\pi_{X*}\widetilde\Oc_{X\times_\Uc X}(1, 1, 1) \stackrel{\sim}{\ra} \Oc_\Uc$. 
Its pre-image $\psi_\Uc^{-1}1_\Uc$ is the \emph{relative Szeg\"o kernel differential} $\mathbb S_\Uc$ for the family of generic, even spin curves $X\stackrel{\pi_X}{\ra} \Uc$. In viewing $\psi_\Uc = (\psi_u)_{u\in \Uc}$ as a family of isomorphisms \eqref{rfhhf893f903f4f4f43} this differential over $\Uc$ is given by the assignments $u \mapsto \psi_u^{-1}1_u$.

\begin{LEM}\label{ruerivgiuhrouvjioe}
Over the locus of generic, even spin curves $\Scl\Mcl^+_g$, there exists a section $\mathbb S_g\in H^0(\pi\pi_*\widetilde\Oc(1, 1, 1)|_{\mathcal S\mathcal M_g^+})$ such that $\mathbb S_g|_\Uc = \mathbb S_\Uc$ for all open $\Uc\subset \Scl\Mcl_g^+$.
\end{LEM}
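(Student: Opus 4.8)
## Proof proposal for Lemma \ref{ruerivgiuhrouvjioe}

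The plan is to promote the fiber-wise construction of the normalized Szeg\"o kernel differential to a global construction over $\Scl\Mcl_g^+$, by exhibiting $\mathbb S_g$ as the preimage, under a globally-defined isomorphism of sheaves, of a canonical nowhere-vanishing section of a line bundle. First I would set up the global analogue of \eqref{rfhhf893f903f4f4f43}: over $\Scl\Mcl_g^+$, the sheaf $\pi\pi_*\widetilde\Oc(1,1,1)|_{\Scl\Mcl_g^+}$ receives the restriction-to-diagonal map, landing in $pp_*\Delta_*\Oc_{\Scl\Mcl_{g,1}}$, which (by the standard identification of the zeroth direct image of $\Delta_*\Oc$) is canonically $({\bf R}^1\pi\pi_*\om)^*$ after dualizing — here $\om$ is the relative canonical bundle and I am using that $({\bf R}^1 p_*\Oc_{\Scl\Mcl_{g,1}})^* \cong {\bf R}^0 p_* \om_p$ by relative Serre duality. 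The content of Lemma \ref{rf8938f38jf03jf0} is exactly that, restricted to each generic even fiber, this map is an isomorphism onto $H^0(\Oc_C) \cong \Cbb$; since $\Scl\Mcl_g^+$ is by definition the locus where $h^0(K_C^{1/2}) = 0$, and since the formation of $H^0(\Oc_{C\times C}(1,1,0))$ and $H^1(\Oc_{C\times C}(1,1,0))$ commutes with base change when these vanish (by cohomology-and-base-change / Grauert, the direct image sheaves being locally free of the expected rank), the restriction map is an isomorphism of sheaves $\psi_g: \pi\pi_*\widetilde\Oc(1,1,1)|_{\Scl\Mcl_g^+} \xrightarrow{\sim} ({\bf R}^1\pi\pi_*\om)^*$ over the whole locus.

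Next I would produce the canonical section $1_g$. The point is that $({\bf R}^1\pi\pi_*\om)^*$ carries a tautological global section: the class of the constant function $1$ is a canonical, nowhere-vanishing element of $H^0(C,\Oc_C)$ for every curve $C$, so by the base-change identification these assemble into a global section $1_g \in H^0(\Scl\Mcl_g^+, ({\bf R}^1\pi\pi_*\om)^*)$ that restricts to the generator $1_u$ on each fiber, and in particular restricts to $1_\Uc$ over any open $\Uc$. (Equivalently, ${\bf R}^1\pi\pi_*\om$ is the trivial bundle with its canonical trivialization coming from the trace/residue map, whose dual basis vector is $1_g$.) Setting $\mathbb S_g \stackrel{\Delta}{=} \psi_g^{-1} 1_g$ then gives the desired global section of $\pi\pi_*\widetilde\Oc(1,1,1)|_{\Scl\Mcl_g^+}$.

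It remains to check compatibility with restriction to open sets $\Uc \subset \Scl\Mcl_g^+$, which is where the conventions from \S\ref{rfh78gf74hf89h8f04} and the base-change statement \eqref{rfh73hf893f8j380j09} come in: a family $X \xrightarrow{\pi_X} \Uc$ of generic even spin curves realizes $X$ as $p^{-1}\Uc$ inside $\Scl\Mcl_{g,1}$ and $X\times_\Uc X$ as the restriction of $\Scl\Mcl_{g,1}\times_{\Scl\Mcl_g}\Scl\Mcl_{g,1}$, so $(\pi\pi_*\widetilde\Oc(1,1,1))|_\Uc \cong \pi\pi_{X*}\widetilde\Oc_{X\times_\Uc X}(1,1,1)$ and similarly for the target line bundle; under these identifications $\psi_g|_\Uc = \psi_\Uc$ and $1_g|_\Uc = 1_\Uc$, and since taking preimages commutes with restriction of an isomorphism, $\mathbb S_g|_\Uc = \psi_\Uc^{-1}1_\Uc = \mathbb S_\Uc$. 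The main obstacle I anticipate is the base-change bookkeeping: one must be careful that $\Scl\Mcl_g^+$ (and $\Scl\Mcl_{g,1}$) are being treated as stacks, so "open set" means a smooth atlas chart / groupoid-equivariant open, and that cohomology-and-base-change applies in this stacky setting — but this is exactly the framework already in force in \S\ref{rfh78gf74hf89h8f04} and in Donagi–Witten's construction \eqref{rhf93hf93hf83h0}, so no new subtlety arises beyond invoking it carefully.
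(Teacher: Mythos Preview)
Your proposal is correct and follows essentially the same approach as the paper: both globalize the fiber-wise isomorphism $\psi$ of Lemma \ref{rf8938f38jf03jf0} over $\Scl\Mcl_g^+$, pull back the canonical section $1$, and verify compatibility with restriction via the base-change identification \eqref{rfh73hf893f8j380j09}. The only cosmetic difference is presentational: you construct $\mathbb S_g$ globally first (as $\psi_g^{-1}1_g$, having established $\psi_g$ is an isomorphism of sheaves via cohomology-and-base-change) and then restrict, whereas the paper phrases the same argument as gluing the local sections $\mathbb S_\Uc = \psi_\Uc^{-1}1_\Uc$ over an open cover; your version is in fact slightly more explicit about why the global isomorphism exists (the vanishing of $h^i(\Oc_{C\times C}(1,1,0))$ on the generic even locus, combined with Grauert).
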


\begin{proof}
Starting from \eqref{rfh894hf89f09j3} with $\pi\pi_X : X\times_\Uc X \ra \Uc$ the projection we obtain a relation with \eqref{rfh3hf98hf8030} being,
\[
\xymatrix{
\ar[d]_{\pi\pi_X} X\times_\Uc X \ar[rr] & & \mathcal S\Mcl_{g, 1}\times_{\Scl\Mcl_g}\Scl\Mcl_{g, 1}\ar[d]_{pp}
\\
\Uc \ar[rr] & & \Scl\Mcl_g.
}
\]
Evidently, open sets in $\mathcal S\Mcl_{g, 1}\times_{\Scl\Mcl_g}\Scl\Mcl_{g, 1}$ are products of families $X\times_\Uc X$. With the general formula in \eqref{rfh73hf893f8j380j09} we have then:
\begin{align}
\big(
pp_*
\widetilde\Oc(a, b, c)\big)(\Uc)
&=
pp_*
\widetilde\Oc(a, b, c)|_\Uc
\notag
\\
&\cong
\pi\pi_{X*}\widetilde\Oc(a, b, c)|_{X\times_\Uc X}
&&\mbox{(from \eqref{rfh73hf893f8j380j09})}
\notag
\\
&=
\pi\pi_{X*}
\widetilde \Oc_{X\times_\Uc X}(a, b, c)
\label{rfh983hf983f09j3444}
\end{align}
where $\widetilde\Oc(a, b, c)$ is the sheaf from \eqref{rfg8gf93h8390} and $\widetilde \Oc_{X\times_\Uc X}(a, b, c)$ is as in \eqref{rfh83jf093jf09j30}. To give a section $\mathbb S_g\in H^0(pp_*\widetilde\Oc(1, 1, 1)|_{\Scl\Mcl_g^+})$ is equivalent to giving a section over each open set $\big(pp_*\widetilde\Oc(a, b, c)\big)(\Uc)$. Since we are working with the space parametrising generic, even spin curves this lemma follows from $\eqref{rfh983hf983f09j3444}|_{(a, b, c) = (1, 1, 1)}$ and the construction of the relative Szeg\"o kernel $\mathbb S_\Uc$.
\end{proof}

\subsubsection{Proof of Theorem \ref{rhf893h8f3h0fj39}: global splitting}
We are now in a position to complete the proof of Theorem \ref{rhf893h8f3h0fj39}. Recall, we want to show that the direct image of the sequence \eqref{rfj039jf093jf90jf3j} on $\Scl\Mcl_g|_{g=2}$ splits. Equivalently that the following sequence
\begin{align}
0
\lra
pp_*\widetilde \Oc(3, 3, 0)
\lra 
pp_*\widetilde \Oc(3, 3, 1)
\lra 
pp_*\Delta_*K_p^2
\lra
0.
\label{rfh74hf94hf04j0}
\end{align}
of sheaves on $\mathcal S\Mcl_g|_{g=2}$ splits. If we can argue this, Theorem \ref{rhf893h8f3h0fj39} will follow immediately from Theorem \ref{rhf983hf8030fj30} and Lemma \ref{rfu3f983hfj390f444}.\footnote{To elaborate, with $\dim \Mfr_g = (3g - 3|2g-2)$,\ see that $\Mfr_g|_{g = 2}$ will be $(3|2)$-dimensional. Hence we can apply Lemma \ref{rfu3f983hfj390f444} in genus $g= 2$.} Central to our argument is the construction of the relative Szeg\"o kernel $\mathbb S_g$ from Lemma \ref{ruerivgiuhrouvjioe}. With this note that Proposition \ref{rfh7f983hf98330} globalizes to moduli space giving the diagram
\begin{align}
\xymatrix{
\ar[d]_{\cdot\mathbb S_g} pp_* \widetilde\Oc(2, 2, 0) \ar[rr]^\sim & & \otimes ^2pp_*\Delta_*K_p\ar[d]
\\
pp_*\widetilde\Oc(3, 3, 1) \ar[rr]^{res} & & pp_*\Delta_*K_p^2.
}
\label{rhfu4hf984f0j49f04}
\end{align}
Accordingly, Proposition \ref{rhf8hf98hf030} also globalizes. Now let $\Uc\subset \Scl \Mcl_g|_{g = 2}$ be an open set intersecting $\Scl\Mcl_g^+|_{g=2}$. With a family $X \stackrel{\pi_X}{\ra} \mathcal U$ note that we have an isomorphism $\pi\pi_{X*}\widetilde\Oc_{X\times_\Uc X}(a, b, c)\otimes k(u) \cong H^0(\Oc_{C_u\times C_u}(a, b, c))$, where $k(u)$ is the residue field at $u\in \Uc$.\footnote{See e.g., \cite[Ch. III, \S9]{HARTALG} for a more general case.} With \eqref{rhfu4hf984f0j49f04}, the proof of the fiber-wise splitting in Corollary \ref{rbfuyvfyuf9h3f784} adapts to split the central extension $\pi\pi_{X*}\widetilde\Oc_{X\times_\Uc X}(3, 3, 1)\otimes k(u)$ for any $u\in \Uc$. Hence we obtain a splitting of $\pi\pi_{X*}\widetilde\Oc_{X\times_\Uc X}(3, 3, 1)$ which, by \eqref{rfh983hf983f09j3444}, gives a splitting of $(pp_*\widetilde\Oc(3, 3, 1))(\Uc)$. 

Theorem \ref{rhf893h8f3h0fj39} now follows.
\qed

\bibliographystyle{alpha}
\bibliography{Bibliography}

\hfill
\\
\noindent
\small
\textsc{
Kowshik Bettadapura 
\\
\emph{School of Mathematics and Statistics} 
\\
University of Melbourne
\\
Victoria, 3010, Australia}
\\
\emph{E-mail address:} \href{mailto:k.bettad@gmail.com}{k.bettad@gmail.com}

\end{document}